\newtheorem{theorem}{Theorem}[section]
\newtheorem{lemma}[theorem]{Lemma}
\newtheorem{corollary}[theorem]{Corollary}
\theoremstyle{definition}
\newtheorem{example}[theorem]{Example}
\newtheorem{remark}[theorem]{Remark}
\renewcommand{\geq}{\geqslant}
\renewcommand{\leq}{\leqslant}
\def \md#1{{\,({\rm mod}\ #1)}}
\title{
Biembeddings of cycle systems using integer Heffter arrays}
\author{
Nicholas J. Cavenagh\thanks{Department of Mathematics, The University of Waikato, Private Bag 3105, Hamilton 3240, New Zealand.
\texttt{nickc@waikato.ac.nz}}\and
Diane M. Donovan\thanks{School of Mathematics and Physics, The University of Queensland,
 Queensland 4072,
Australia. (\texttt{dmd@maths.uq.edu.au})}\and
Emine \c{S}. Yazici\thanks{Department of Mathematics, Ko\c{c} University, Sar{\i}yer,
34450, \.{I}stanbul, Turkey (\texttt{eyazici@ku.edu.tr})}
}
\date{August 2019}
\begin{document}

\maketitle

\begin{abstract}
In this paper we will show the existence of a face $2$-colourable biembedding of the complete graph onto an orientable surface where each face is a cycle of a fixed length $k$, for infinitely many values of $k$.  
In particular, under certain conditions, we show that there exists at least $(n-2)[(p-2)!]^2/(e^2 kn)$ non-isomorphic face $2$-colourable biembeddings of $K_{2nk+1}$ in which all faces are cycles of length $k=4p+3$.
These conditions are: $n\equiv 1\md{4}$, $k\equiv 3\md{4}$ and either $n$ is prime or $n\gg k$ and $n\equiv 0\md{3}$ implies $p\equiv 1\md{3}$. To achieve this result we begin by verifying the existence of  $(n-2)[(p-2)!/e]^2$ non-equivalent Heffter arrays,  $H(n;k)$, which satisfy the conditions:  (1) for each row and each column the sequential partial sums determined by the natural ordering must be distinct modulo $2nk+1$; (2) the
composition of the natural orderings of the rows and columns is equivalent to a single cycle permutation on the entries in the array. The existence of Heffter arrays $H(n;k)$ that satisfy condition (1) was established earlier in \cite{BCDY} and in this current paper we vary this construction and show that there are at least $(n-2)[(p-2)!/e]^2$ such non-equivalent $H(n;k)$ that satisfy condition (1) and then show that each of these Heffter arrays also satisfy condition (2) under certain conditions.
\end{abstract}

\section{Introduction}

A $k$-cycle system of order $v$ is an edge disjoint decomposition of the complete
graph $K_v$ into cycles of length $k$. Cycle systems can be represented as  embeddings of the underlying graph  on a surface (or pseudosurface), where the cycles correspond to faces in the embedding.
Researchers have exploited this connection 
 in the study of the ``Heawood Map Colouring Conjecture'', see \cite{R,GG} and related problems.
In this paper we focus on decompositions of $K_v$ into cycles of constant length, however in general the underlying graph need not be the complete graph and the cycles need not be of constant length.
When the embedding is a proper face $2$-colourable
embedding of the complete graph $K_v$, in which each face corresponds to a cycle of length $k$
and each colour class corresponds to a $k$-cycle system,  we say the pair of $k$-cycle systems
{\em biembeds} in the surface.

It is clear that not all pairs of $k$-cycle systems  biembed on a surface, however it not obvious which pairs of such  systems  biembed.
 Further, to date, comparatively little is known about the spectrum of values of $k$ for which there exists a pair of biembeddable $k$-cycle systems.

When $k=3$, a $k$-cycle system  is commonly referred to as a Steiner
triple systems of order $v$ or STS($v$).
For these systems, it is known that necessary and sufficient conditions for the existence of  biembeddings of pairs of STS($v$) are: (a) $v\equiv 1, 3\md{6}$  and $v\geq 9$ for non-orientable surfaces; and (b) $v\equiv  3, 7\md{12}$ for orientable surfaces, \cite{GK,R}. These systems have been studied extensively and the early survey by Grannell and Griggs \cite{GG} is an excellent starting point for further information. The reader may also refer to the recent work by Korzhik \cite{K}.

Ellingham and Stephens \cite{ES} show that 
for odd $v\geq 7$ there exists a pair of biembeddable Hamilton cycle systems (i.e. $k=v$) of order $v$. 
In 2016 Griggs and McCourt \cite{GM} developed new constructions for biembeddings of symmetric ($k=(v -1)/2$) $k$-cycle decompositions of $K_{v}$ and showed necessary and sufficient conditions for the existence of a biembedding of symmetric $k$-cycle systems on a non-orientable surface if $k\geq 4$  and on an orientable surface if $k$  is odd and $k\geq 3$.
Other studies connecting  cycle decompositions and embeddings on orientable and non-orientable surfaces include
 \cite{A,
 CMPP,DM,GG,GrM,GM,M}
 and  \cite{ItalE}.

In 2015  Archdeacon \cite{A} studied biembeddings of cycle systems of the complete graph on a surface and formalized the connection between biembeddings  and Heffter arrays.
 Heffter arrays arise as an extension to
 Heffter's \cite{H} famous first difference problem:
 partition the set $\{1,\dots,3m\}$ into $m$ triples $\{a,b,c\}$ such that either $a+b=c$ or $a+b+c$ is divisible by $6m+1$. This problem was solved by Peltesohn, \cite{P}, some 40 years later, for all $m\neq 3$, a result that also implies the existence of  cyclic Steiner triple systems on the given order; see \cite{CD}. A natural extension to Heffter's first difference problem  is: can we identify a set of $m$ subsets $\{x_1,\dots,x_s\}\subset \{-ms,\dots, -1,1,\dots,ms\}$ such that the sum of the entries in each subset is divisible by $2ms+1$ and further if $x$ occurs in one of the subsets, $-x$ does not occur in any of the subsets? We call the set of $m$ such subsets a {\em Heffter system}. Two Heffter systems, $H_1=\{H_{11},\dots, H_{1m}\}$, $|H_{1i}|=s$ for $i=1,\dots m$, and  $H_2=\{H_{21},\dots, H_{2n}\}$, $|H_{2j}|=t$ for $j=1,\dots n$,  where $sm=nt$, are said to be {\em orthogonal} if for all $i,j$, $|H_{1i}\cap H_{2j}|\leq 1$.

Given the connection between Heffter's first difference problem and biembeddings of pairs of $3$-cycle systems (STS$(v)$), one may ask which Heffter systems yield biembeddings of cycle decompositions, where the length of the cycles may vary.

To study this problem we follow the work of Archdeacon, \cite{A}, and Dinitz and Mattern, \cite{DM}, who observed that  an orthogonal Heffter system is equivalent to a
 {\em Heffter array} $H(m,n;s,t)$ which is an $m\times n$ array of integers such that:
\begin{itemize}
\item each row contains $s$ filled cells and each column contains $t$ filled cells;
\item the elements in every row and column sum to $0$ in ${\mathbb Z}_{2ms+1}$; and
\item for each integer $1\leq x\leq ms$, either $x$ or $-x$ appears in the array.
\end{itemize}
The {\em support} of an array  is taken to be the set of absolute values of the entries occurring in the array.
In \cite{ABD} it was shown that
a $H(m,n;n,m)$ exists for all possible values of $m$ and $n$.

If $m=n$ and necessarily $s=t=k$, we say the Heffter array is {\em square}, usually denoted by $H(n;k)$. If the elements in every row and column sum to $0$ in ${\mathbb Z}$, we refer to the array as an {\em integer} Heffter array.
The spectrum for square Heffter arrays has been completely determined as stated in the following theorem, see \cite{ADDY, DW,CDDY}.

\begin{theorem} {\rm \cite{ADDY, DW,CDDY}} \label{SquHefSpec}
There exists an $H(n; k)$ if and only if $3 \leq k \leq  n$. Also there exists an integer $H(n;k)$ if and only if $nk\equiv 0,3 \md{4}$.
\label{mainthm}
\end{theorem}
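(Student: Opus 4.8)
The plan is to prove necessity by counting and parity, and sufficiency by explicit construction, treating the modular array and the integer array in parallel since they share a common skeleton. Necessity is quick. Because each of the $n$ rows has exactly $k$ of its $n$ cells filled, $k \le n$. Since the $nk$ filled cells must realise each of $1,\dots,nk$ as exactly one of $\pm x$, the entries form a signed permutation of $\{1,\dots,nk\}$; in particular the $k$ entries of any row have distinct absolute values. If $k=1$ a row is a single entry $x$ with $1 \le |x| \le n < 2n+1$, which cannot be $\equiv 0 \md{2n+1}$; if $k=2$ a row $\{a,b\}$ has $a \ne -b$, so $0 < |a+b| \le 4n < 4n+1$ and again $a+b \not\equiv 0 \md{4n+1}$. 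Hence $k \ge 3$. For the integer case I would add a global parity argument: summing the integer row sums gives $\sum_{x=1}^{nk} \varepsilon_x x = 0$ for signs $\varepsilon_x \in \{\pm 1\}$, and since $\sum \varepsilon_x x \equiv \sum_{x=1}^{nk} x \md 2$, the triangular number $nk(nk+1)/2$ must be even; checking $nk \md 4$ shows this happens precisely when $nk \equiv 0,3 \md 4$.

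For sufficiency I would reduce the whole family $\{(n,k) : 3 \le k \le n\}$ to a small number of construction templates built on a banded (cyclic-diagonal) placement: in each row $i$ fill the $k$ consecutive cells $(i,i),(i,i+1),\dots,(i,i+k-1)$, with column indices read modulo $n$. This automatically puts exactly $k$ filled cells in every row and every column, so the problem collapses to assigning the values $\pm 1,\dots,\pm nk$ (one of each pair) to the filled cells with all row and column sums zero. The value assignment I would organise by the residues of $n$ and $k$ modulo $4$: partition the support into blocks of consecutive integers, lay them along the diagonals so that the column contributions telescope, and fix the within-row signs to zero out each row. In the integer case the parity condition $nk \equiv 0,3 \md 4$ is exactly what allows each row's assigned values to be split into two subsets of equal sum, so that a choice of signs makes the row sum vanish; in the modular case cancellation may be taken modulo $2nk+1$, which is why no parity restriction survives there.

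The main obstacle, and the reason the complete result is assembled across \cite{ADDY, DW, CDDY}, is that the banded template degenerates near the diagonal boundary: when $k$ approaches $n$ the bands overlap so heavily that the freedom to adjust signs evaporates, so the tight square case $k=n$ together with a band of nearby cases needs dedicated constructions, and a finite list of small sporadic pairs $(n,k)$ resists every uniform template and must be exhibited directly. Throughout, the delicate part is the simultaneous bookkeeping: verifying in each congruence subcase that the row sums, the column sums, and the half-set condition all hold at once, rather than any one of them in isolation.
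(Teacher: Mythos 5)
First, a point of reference: the paper does not prove this statement at all --- it is quoted verbatim from the literature (\cite{ADDY, DW, CDDY}), where the existence constructions occupy three separate research papers. So the only question is whether your blind attempt actually constitutes a proof, and it does not: the necessity half is correct and complete, but the sufficiency half is a plan rather than an argument. Your necessity arguments are the standard ones and are sound: $k\le n$ by counting filled cells in a row; $k\ne 1,2$ because a row sum of one or two entries with distinct absolute values in $\{1,\dots,nk\}$ is a nonzero integer of absolute value less than $2nk+1$; and for the integer case, reducing $\sum_{x=1}^{nk}\varepsilon_x x=0$ modulo $2$ forces $nk(nk+1)/2$ to be even, which happens exactly when $nk\equiv 0,3\md{4}$.

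The gap is the existence direction. Declaring a banded (cyclic-diagonal) placement and asserting that one can ``partition the support into blocks of consecutive integers, lay them along the diagonals so that the column contributions telescope, and fix the within-row signs'' is a description of a hoped-for construction, not a construction. Nothing in your proposal verifies that such an assignment exists for even one infinite family of pairs $(n,k)$, let alone for all $3\le k\le n$, and the simultaneous satisfaction of the row-sum, column-sum, and half-set conditions is precisely the hard content of the cited papers (which in fact require shiftable/support-shifted building blocks, separate treatments of $k\equiv 0,1,2,3\md 4$, and ad hoc small cases). Your remark that the parity condition $nk\equiv 0,3\md 4$ ``is exactly what allows each row's assigned values to be split into two subsets of equal sum'' is also misleading: the parity obstruction is a single global constraint, and it is far from sufficient to guarantee that every row \emph{and} every column can be zeroed out simultaneously; that implication is exactly what a proof would have to establish. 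As it stands, your argument proves the ``only if'' statements and merely gestures at the ``if'' statements, so it is not a proof of the theorem.
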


Archdeacon \cite{A} went on to prove that a Heffter array $H(m,n;s,t)$ that admits a simple and compatible ordering of the rows and columns, can be used to construct a face $2$-colourable biembedding of the complete graph
$K_{2ms+1}$ on an orientable surface. The definitions of simple and compatible are as follows.

Given a row $r$ of a Heffter array $H(m,n;s,t)$, if there exists a cyclic ordering $\phi_r=(a_0,a_1,\dots ,a_{s-1})$ of the entries of row $r$ such that, for $i=0,\dots, s-1$,
the partial sums
$$\alpha_i=\displaystyle\sum_{j=0}^{i} a_j \md{2ms+1}$$
 are all distinct, we say that $\phi_r$ is {\em simple}. A simple ordering of the entries of a column may be defined similarly.
If every row and column of a Heffter array $H(m,n;s,t)$ has a simple ordering, we say that the array is {\em simple}.

Suppose that a simple cyclic ordering  $\phi_r=(a_1,a_2,\dots ,a_s)$ of a row $r$ of a Heffter array has the property that whenever entry $a_i$ lies in cell $(r,c)$ and entry $a_{i+1}$ lies in cell
$(r,c^{\prime})$, then $c<c^{\prime}$. That is, the ordering for the row $r$ is taken from  left to right across the array. Observe that if this ordering is simple then the ordering from right to left is also simple and vice versa. We say that $\phi_r$ is the {\em natural} ordering for the rows and define a natural column ordering in a similar way with the ordering going from top to bottom.
If the natural ordering for every row and column is also a simple ordering, we say that the Heffter array is {\em globally simple}.

The composition of the cycles $\phi_r$,  for  each row $r\in [m]$,  is a permutation, denoted here  $\omega_r$, on the entries of the Heffter array. Similarly we may define the permutation
$\omega_c$ as the composition of the cycles $\phi_c$, for the columns $c\in [n]$.
If, the permutation $\omega_r\circ \omega_c$ can be written as a single cycle of length $ms=nt$, we say that
$\omega_r$ and $\omega_c$ are  {\em compatible} orderings for the Heffter array.

An abelian group $(H,+)$ (with generator labelled $1$) has a {\em sharply vertex-transitive} action on an embedded graph ${\mathcal G}$ if the vertices of
${\mathcal G}$ are labelled with the elements of $H$ and the permutation $x\rightarrow x+1$, when applied to the vertices, is an isomorphism of ${\mathcal G}$.
Note that this action is not only an isomorphism of the underlying graph of ${\mathcal G}$ but an isomorphism of the embedding, thus also preserving faces.

Archdeacon's result is as follows:

\begin{theorem}\label{Archdeacon} {\rm \cite{A}} Suppose there exists a Heffter array $H(m, n; s, t)$ with orderings $\omega_r$ on the
entries in the rows of the array and $\omega_c$ on the entries in the columns of the array, where $\omega_r$ and $\omega_c$ are both simple and compatible.
Then there exists a face $2$-colourable biembedding ${\mathcal G}$ of $K_{2ms+1}$ on an orientable surface such that
the faces of one colour are cycles of length $s$ and the faces of the other colour are cycles of length $t$.
Moreover, $(\mathbb{Z}_{2ms+1},+)$ has a sharply vertex-transitive action on ${\mathcal G}$.
\end{theorem}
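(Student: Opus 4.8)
The plan is to realise the embedding as a $\mathbb{Z}_{2ms+1}$-invariant rotation system on $K_{2ms+1}$. Write $v=2ms+1$ and identify the vertices of $K_v$ with $\mathbb{Z}_v$, so that the neighbours of $0$ are exactly the $2ms$ nonzero elements, which split as $E\sqcup(-E)$, where $E$ is the set of entries of the array (recall that exactly one of $x,-x$ occurs for each $x$ in the support). I would specify a single cyclic permutation $\rho$ of $\mathbb{Z}_v\setminus\{0\}$, declare it to be the rotation at $0$, and transport it to every other vertex by translation, setting the rotation at $g$ to be $x\mapsto g+\rho(x-g)$. Two facts then come for free: the translation $x\mapsto x+1$ preserves the whole rotation system and hence is an automorphism of the embedding, giving the sharply vertex-transitive action of $(\mathbb{Z}_v,+)$; and, since a pure rotation system on a connected graph always determines a cellular embedding in a connected orientable surface, the embedding automatically lives on one orientable surface. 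The entire content of the theorem therefore reduces to (i) choosing $\rho$ so that its faces are the prescribed cycles, and (ii) checking that $\rho$ really is a single $2ms$-cycle, so that it is a legitimate rotation at a vertex of $K_v$.

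For step (i) I would read off $\rho$ from the orderings via the face-tracing rule. A row $r$ with simple ordering $\phi_r=(a_0,\dots,a_{s-1})$ traces the closed walk $0,\,a_0,\,a_0+a_1,\dots$ whose steps are the $a_i$; simplicity says the partial sums are distinct modulo $v$, so this walk visits $s$ distinct vertices and is a genuine $s$-cycle, and by translation we obtain $v$ such row cycles for each row. Requiring each of these to bound a face forces the rotation to send the reversed direction $-a_i$ to the next row-entry $a_{i+1}=\omega_r(a_i)$; that is, $\rho(y)=\omega_r(-y)$ for $y\in -E$. Tracing the columns with the opposite orientation (their darts are the reverses of the row darts) forces $\rho(x)=-\omega_c^{-1}(x)$ for $x\in E$. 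These two prescriptions have disjoint domains $-E$ and $E$, so they glue into a well-defined permutation $\rho$ of $\mathbb{Z}_v\setminus\{0\}$ that alternates between $E$ and $-E$. Because the forward darts (directions in $E$) all lie in row faces and the backward darts (directions in $-E$) all lie in column faces, every edge borders exactly one face of each type; this is the face $2$-colouring, and since the row entries partition the support, the row faces form an $s$-cycle system and the column faces a $t$-cycle system.

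The crux is step (ii): verifying that this $\rho$ is a single cycle on all $2ms$ neighbours of $0$, equivalently that the rotation is valid and the surface is connected with exactly the claimed faces. Since $\rho$ interchanges $E$ and $-E$, its square preserves each, and a short computation gives $\rho^2|_E=\omega_r\circ\omega_c^{-1}$. Hence $\rho$ is a single $2ms$-cycle precisely when $\rho^2|_E$ is a single $ms$-cycle, i.e. when $\omega_r\circ\omega_c^{-1}$ is a single cycle of length $ms$; up to reversing the column orderings (which keeps them simple, as already noted in the text, and replaces $\omega_c$ by $\omega_c^{-1}$) this is exactly the compatibility hypothesis that $\omega_r\circ\omega_c$ be a single $ms$-cycle. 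This is the step I expect to be the main obstacle, both because it is where compatibility is genuinely used and because it requires pinning down the orientation conventions so that the row/column dart bookkeeping is consistent. With $\rho$ confirmed to be a single cycle, Euler's formula applied to $V=v$, $E=msv$ and the $(m+n)v$ faces merely records the genus, while simplicity guarantees that every face is a genuine cycle rather than a self-touching closed walk, completing the biembedding.
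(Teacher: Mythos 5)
The paper itself contains no proof of this theorem --- it is quoted verbatim from Archdeacon \cite{A} --- and your argument is, in essence, Archdeacon's own: a translation-invariant rotation system on $\mathbb{Z}_{2ms+1}$ whose face-tracing recovers the translated row and column cycles, with simplicity guaranteeing the faces are genuine cycles, compatibility being exactly the condition that the local rotation $\rho$ is a single $(2ms)$-cycle (so one gets a surface rather than a pseudosurface, matching the paper's remark after the theorem), and translation giving the sharply vertex-transitive $\mathbb{Z}_{2ms+1}$-action. Your reconstruction is correct, including the one delicate point you flag yourself: the $\omega_c$ versus $\omega_c^{-1}$ convention mismatch, which you legitimately resolve by reversing the column orderings, using precisely the paper's observation that the reversal of a simple ordering is again simple.
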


A corollary of this result is that the decompositions ${\mathcal C}$ and ${\mathcal C}^{\prime}$ of the graph $K_{2ms+1}$ into $s$-cycles and $t$-cycles (respectively) are orthogonal. If we relax the condition of simplicity in the above theorem, we still have a biembedding on an orientable surface but the faces collapse into smaller ones (and the cycles become circuits).
On the other hand if we relax only the condition of compatibility, we have an embedding onto a pseudosurface rather than a surface, but ${\mathcal C}$ and ${\mathcal C}^{\prime}$ remain orthogonal.

In  \cite{DM} it was verified that there exists an  $H(m,n;n,m)$ which admits both simple and compatible orderings,  for all $n\geq 3$ when  $m=3$, and for all $3\leq n\leq 100$ when $m=5$.
In \cite{ADDY, DW, CMPP} it was verified that there exists an integer $H(n;k)$,  $n\geq k$ and $nk\equiv 3\md{4}$, that admit both simple and compatible orderings,  when $k\in \{3,5,7,9\}$ and simple orderings when
 $k\in \{4,6,8,10\}$.
 Focusing on simple orderings only, the authors of \cite{BCDY} constructed simple Heffter arrays, $H(n;k)$, satisfying the conditions:
(a) $k \equiv 0 \md{4}$; or (b) $k \equiv 3\md{4}$ and $n \equiv 1 \md{4}$; or (c) $k \equiv 3 \md{4}$, $n \equiv 0 \md{4}$, and $n \gg k$.
In the current paper, we 
 establish existence results for Heffter arrays $H(n;k)$ with simple and compatible orderings where  $n \equiv 1 \md{4}$, $k \equiv 3\md{4}$  and $n$ is prime or; $n\gg k$ and either $n\not\equiv 0\md{3}$ or $p\equiv 1\md{3}$.


The starting point for our study is the following result providing necessary conditions for the existence of compatible  Heffter arrays. It is a generalization of results given in \cite{DM,CMPP,CDP}.

\begin{theorem}
If there exist compatible orderings  $\omega_r$ and $\omega_c$ for a Heffter array $H(m,n;s,t)$, then
either:
\begin{itemize}
\item $m$, $n$, $s$ and $t$ are all odd;
\item $m$ is odd, $n$ is even and $s$ is even; or
\item  $m$ is even, $n$ is odd and $t$ is even.
\end{itemize}
\label{compatt}
\end{theorem}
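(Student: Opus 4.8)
The plan is to run a parity (sign) argument in the symmetric group $\sym_N$ acting on the $N := ms = nt$ filled cells of the array, where the key leverage is that ``being a single cycle'' pins down the sign of $\omega_r\circ\omega_c$ exactly, and this must match the sign predicted by the disjoint-cycle structures of $\omega_r$ and $\omega_c$.

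First I would record the cycle structures and read off their signs via the homomorphism $\mathrm{sgn}\colon \sym_N\to\{+1,-1\}$. By definition each $\phi_r$ is an $s$-cycle and distinct rows act on disjoint sets of entries, so $\omega_r$ is a product of $m$ disjoint $s$-cycles; hence $\mathrm{sgn}(\omega_r)=(-1)^{m(s-1)}$. Symmetrically $\omega_c$ is a product of $n$ disjoint $t$-cycles, so $\mathrm{sgn}(\omega_c)=(-1)^{n(t-1)}$. Since $\mathrm{sgn}$ is multiplicative,
\[
\mathrm{sgn}(\omega_r\circ\omega_c)=(-1)^{m(s-1)+n(t-1)}.
\]

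Next I would invoke compatibility: $\omega_r\circ\omega_c$ is a single $N$-cycle on $N$ points, and an $N$-cycle is a product of $N-1$ transpositions, so it has sign $(-1)^{N-1}$. Equating the two expressions for the sign and reducing the exponent modulo $2$, using $ms=nt=N$ to rewrite $m(s-1)+n(t-1)=2N-m-n\equiv m+n \md{2}$, I obtain the single necessary congruence
\[
m+n\equiv N+1\equiv ms+1 \md{2}.
\]

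Finally I would split on the parity of $N=ms=nt$. If $N$ is odd then $ms$ and $nt$ are both odd, forcing $m,s,n,t$ all odd, which is the first listed case (and the displayed congruence then holds automatically). If $N$ is even, the congruence forces $m+n$ to be odd, so exactly one of $m,n$ is even: when $m$ is odd and $n$ is even, evenness of $N=ms$ together with $m$ odd forces $s$ even, giving the second case; symmetrically, $m$ even and $n$ odd forces $t$ even through $N=nt$, giving the third case, while the remaining possibility of $m,n$ both even is excluded since it contradicts $m+n$ odd. I do not expect a genuine obstacle, as the whole argument is one sign computation; the step demanding the most care is this final bookkeeping, namely checking that the single congruence, combined with the identity $ms=nt$, both eliminates the all-even possibility and forces the evenness of $s$ (respectively $t$) in the two mixed-parity cases.
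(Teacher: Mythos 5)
Your proposal is correct and follows essentially the same route as the paper: both compute the parity (sign) of $\omega_r\circ\omega_c$ from the disjoint-cycle structures, equate it with the parity of a single $ms$-cycle, and derive the same congruence ($m+n\equiv ms+1\md{2}$ is just a rearrangement of the paper's condition that $n(t-1)-(m-1)$ be even) before finishing with a parity case analysis. The only difference is cosmetic bookkeeping — you split cases on the parity of $N=ms$ while the paper splits on the parity of $n(t-1)$ — and your verification that $ms=nt$ forces the evenness of $s$ (resp.\ $t$) in the mixed cases matches the paper's reasoning exactly.
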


\begin{proof}
Let $\omega_r$ and $\omega_c$ be compatible orderings for a Heffter array $H(m,n;s,t)$.
A permutation is {\em odd} (parity 1) or {\em even} (parity 0) if it can be written as a product of {\em odd} or {\em even} transpositions, respectively. To be clear we say this is the {\em parity} of the permutation. If a permutation is a cycle of even length it has odd parity, and vice versa.

It follows that the parity of $\omega_r$ is equal to $m(s-1)$ (mod $2$) and the parity of
$\omega_c$ is equal to $n(t-1)$ (mod $2$). Thus the parity of $\omega_r\circ \omega_c$ is equal to
 $m(s-1)+n(t-1)$ (mod $2$).
But the parity of a cycle of length $ms$ is equal to $ms-1$ (mod $2$). So if the orderings are compatible,
$m(s-1)+n(t-1)-ms+1=n(t-1)-(m-1)$  is even.

Hence if $n(t-1)$ is odd, then $n$ is odd, $t$ is even and $m$ is even.
Otherwise $n(t-1)$ is even and $m$ is odd. If $n$ is odd, $t$ is odd, thus since $ms=nt$, $s$ is also odd.
Otherwise $n$ is even and $m$ is odd.  Since $ms=nt$, $s$ is even.
\end{proof}

Thus by Theorem \ref{SquHefSpec} and \ref{compatt}, if there exists an integer $H(n;k)$ with both compatible and simple orderings, then $nk\equiv 3\md{4}$. In other words either $n\equiv 1\md{4}$ and $k\equiv 3\md{4}$ or; $n\equiv 3\md{4}$ and $k\equiv 1\md{4}$.
In this context, we will verify the following theorem and show existence of Heffter arrays $H(n;k)$ with simple and compatible orderings for $n\equiv 1\md{4}$ and $k\equiv 3\md{4}$, with infinite sporadic exceptions. The case $n\equiv 3\md{4}$ and $k\equiv 1\md{4}$ remains unsolved in general.

The next four theorems are the main results of this paper. 
Theorem \ref{r1} is proven in Section \ref{Pmain2}.

\begin{theorem}\label{r1}
  Let $n\equiv 1 \md{4}$, $p>0$ and $n> 4p+3$. If there exists $\alpha$ such that
  $2p+2\leq \alpha\leq n-2-2p$, $\mbox{\rm gcd}(n,\alpha)=1$, $\mbox{\rm gcd}(n,\alpha-2p-1)=1$ and $\mbox{\rm gcd}(n,n-1-\alpha-2p)=1$,
 then there exists a globally simple integer Heffter array $H(n;4p+3)$ with an ordering that is both simple and compatible.
\end{theorem}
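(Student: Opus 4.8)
The plan is to build on the construction of \cite{BCDY}, which already produces a globally simple integer Heffter array $H(n;4p+3)$ in exactly the regime $n\equiv 1\md{4}$, $k\equiv 3\md{4}$. Since that array has simple natural row and column orderings, condition~(1) is inherited for free, and the whole content of the theorem is to arrange the entries so that condition~(2) holds as well, i.e.\ so that $\omega_r\circ\omega_c$ is a single cycle of length $nk$. Accordingly I would first fix the positional skeleton: the filled cells occupy the $k=4p+3$ cyclic diagonals with $j-i\in\{-(2p+1),\dots,2p+1\}\pmod{n}$, where the hypothesis $n>4p+3$ guarantees these are $k$ distinct diagonals. The parameter $\alpha$ enters as a shift controlling which absolute values are placed along these diagonals (and with which signs), chosen within the band $2p+2\le\alpha\le n-2-2p$ so that the row and column sums still vanish over $\mathbb{Z}$ and the natural orderings remain simple.

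Next I would write down the two natural-ordering permutations explicitly. Reading each row left to right gives $\omega_r$ as a product of $n$ disjoint $k$-cycles, and reading each column top to bottom gives $\omega_c$ likewise; the only subtlety is the wraparound at the left/right (respectively top/bottom) ends of the band, where the cyclic reduction mod $n$ reorders the cells. I would then track the orbit of a single entry under repeated application of $\omega_r\circ\omega_c$. Because the skeleton is essentially invariant under the diagonal translation $(i,j)\mapsto(i+1,j+1)$, this orbit is governed by additive motion in a $\mathbb{Z}_n$-coordinate, and the step taken at each application depends only on which part of the band the current cell lies in.

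The heart of the argument is the cycle-structure computation for $\omega_r\circ\omega_c$. I would expect the three hypotheses $\gcd(n,\alpha)=1$, $\gcd(n,\alpha-2p-1)=1$ and $\gcd(n,n-1-\alpha-2p)=1$ to arise precisely as the coprimality of the three distinct net displacements incurred as the orbit threads the interior of the band and its two wraparound ends; note that these shift amounts differ from one another by the band half-width $2p+1$, matching the appearance of $\alpha$, $\alpha-(2p+1)$ and $\alpha+(2p+1)\equiv -(n-1-\alpha-2p)\pmod{n}$. Under these coprimality conditions the orbit cannot close up before exhausting all $n$ residues in each regime, so it has full length $nk$; hence $\omega_r\circ\omega_c$ is a single $nk$-cycle, the orderings are simple and compatible, and Theorem~\ref{Archdeacon} yields the claimed face $2$-colourable biembedding of $K_{2nk+1}$ on an orientable surface.

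The main obstacle will be this last step: correctly identifying how the composed permutation passes through the boundaries of the band so that exactly the three displacements $\alpha$, $\alpha-2p-1$ and $n-1-\alpha-2p$ emerge, and then showing that their simultaneous coprimality to $n$ forces a single orbit. The bounds $2p+2\le\alpha\le n-2-2p$ should be what keep these three quantities in a range where each band segment behaves uniformly, so that the bulk of the (careful but essentially routine) bookkeeping lies in the displacement accounting at the wraparound cells rather than in the final coprimality deduction.
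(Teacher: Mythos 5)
Your overall strategy --- take the globally simple $H(n;4p+3)$ of \cite{BCDY} as given and prove compatibility by tracking the orbit of $\omega_r\circ\omega_c$ through the diagonal structure, with the gcd hypotheses entering as coprimality of certain displacements --- is the same as the paper's (its Lemma on arrays supported on full diagonals). But two of your concrete claims fail. First, the skeleton is not what you assume: the filled cells of the \cite{BCDY} array are not a contiguous band $j-i\in\{-(2p+1),\dots,2p+1\}$, but rather the diagonals $D_0,D_1,\dots,D_{4p-2}$ together with the four diagonals $D_{2p+\alpha-3},D_{2p+\alpha-1},D_{2p+\alpha},D_{2p+\alpha+1}$, whose location depends on $\alpha$. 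Consequently the consecutive gaps are $1$, $2$, $\alpha-2p-1$ and $n-1-2p-\alpha$; the hypotheses $\gcd(n,\alpha-2p-1)=1$ and $\gcd(n,n-1-\alpha-2p)=1$ are exactly the two nontrivial gap conditions, while $\gcd(n,\alpha)=1$ is \emph{not} a displacement condition at all --- it is required so that the construction itself is well defined, since the entries of the diagonals $D_{2p}$ and $D_{2p+\alpha}$ are placed in cells indexed by multiples of $\alpha$ modulo $n$. Your proposed three displacements $\alpha$, $\alpha-(2p+1)$, $\alpha+(2p+1)$ do not occur; worse, for your contiguous-band skeleton the gap analysis would demand $\gcd(n,2p+1)=1$, a condition appearing nowhere in the statement, so your picture cannot reproduce the theorem's hypotheses.

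Second, and fatally, the orderings you propose (every row read left to right, every column top to bottom) can never be compatible, for any array of this diagonal type. With cyclic natural orderings, the row-successor map always sends a filled cell to the cyclically previous filled diagonal (moving right decreases $i-j$), and the column-successor map always sends it to the cyclically next filled diagonal (moving down increases $i-j$); hence $\omega_r\circ\omega_c$ maps each diagonal onto itself, so it splits into at least $k$ disjoint cycles and is never a single $nk$-cycle, no matter what coprimality holds. The paper's proof avoids this by reversing the natural ordering on exactly one row (row $n-1$ uses $\phi_{n-1}^{-1}$): each time the orbit passes through that row it advances \emph{two} diagonals rather than zero, and since $k$ is odd and each gap $s_h$ satisfies $\gcd(n,s_h)=1$, each block of $n$ iterations moves the orbit from the row-$(n-1)$ cell of $D_{g(h)}$ to that of $D_{g(h+2)}$, producing one full cycle of length $nk$. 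This one-row reversal is the essential idea missing from your plan; the ``routine bookkeeping'' you defer would, as written, reveal that your composition decomposes into $k$ or more cycles.
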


We then show that under certain conditions a suitable $\alpha$ exists  and prove the following theorem in Subsection \ref{Pmain3}.
\begin{theorem}\label{r2}
Let $n\equiv 1 \md{4}$, $p>0$, $n> 4p+3$ and either: (a) $n$ is prime; (b) $n=4p+5$; or (c) $n>(2p+2)^2$ and either $n\not\equiv 0\md{3}$ or $p\equiv 1\md{3}$.
 Then there exists a globally simple integer Heffter array $H(n;4p+3)$ with an ordering that is both simple and compatible.
Furthermore [by Theorem $\ref{Archdeacon}$], there exists a face $2$-colourable embedding ${\mathcal G}$ of $K_{2n(4p+3)+1}$ on an orientable surface such that
the faces of each color are cycles of length $4p+3$.
Moreover, $\mathbb{Z}_{2n(4p+3)+1}$ has a sharply vertex-transitive action on ${\mathcal G}$.
\end{theorem}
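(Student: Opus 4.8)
The topological and combinatorial content of the statement is already packaged in Theorems \ref{Archdeacon} and \ref{r1}: once we produce a globally simple integer Heffter array $H(n;4p+3)$ carrying an ordering that is both simple and compatible, the face $2$-colourable embedding of $K_{2n(4p+3)+1}$, the fact that every face is a $(4p+3)$-cycle, and the sharply vertex-transitive action of $\mathbb{Z}_{2n(4p+3)+1}$ all follow verbatim from Theorem \ref{Archdeacon} (with $m=n$ and $s=t=4p+3$). By Theorem \ref{r1} such an array exists as soon as we exhibit an integer $\alpha$ with $2p+2\le\alpha\le n-2-2p$ satisfying the three coprimality conditions, so the whole proof reduces to a purely number-theoretic existence claim, to be verified in each of the cases (a), (b), (c). Writing $q=2p+1$, the conditions $\gcd(n,\alpha-2p-1)=1$ and $\gcd(n,n-1-\alpha-2p)=1$ become $\gcd(n,\alpha-q)=1$ and $\gcd(n,\alpha+q)=1$; thus I seek an $\alpha$ in the interval $I=[q+1,\,n-1-q]$ for which all three of $\alpha-q,\alpha,\alpha+q$ are coprime to $n$, and these three values then lie in $[1,n-1]$.

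Cases (a) and (b) should be immediate. If $n$ is prime, every element of $[1,n-1]$ is coprime to $n$, and $I$ is nonempty because $n\equiv1\md{4}$ and $n>4p+3$ force $n\ge 4p+5=2q+3$; hence any $\alpha\in I$ works. If $n=4p+5=2q+3$ I would take the left endpoint $\alpha=2p+2=q+1$: then $\alpha-q=1$, $\alpha+q=n-2$ with $\gcd(n,n-2)=\gcd(n,2)=1$ since $n$ is odd, and $\gcd(n,\alpha)=\gcd(4p+5,2p+2)=1$ because $4p+5=2(2p+2)+1$.

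Case (c) is the heart of the matter, and the plan is a sieve/counting argument exploiting $n>(2p+2)^2=(q+1)^2$. The right viewpoint is residue-theoretic: $\alpha$ is admissible exactly when $\alpha\not\equiv0,q,-q\md{p_i}$ for every prime $p_i\mid n$, so each prime forbids at most three residue classes. First I record why the hypothesis on $3$ is present: for an odd prime $p_i\nmid q$ the residues $0,q,-q$ are distinct, giving three forbidden classes, and when $p_i=3$ this would forbid all of $\mathbb{Z}_3$; the assumption ``$n\not\equiv0\md{3}$ or $p\equiv1\md{3}$'' says precisely that $3\mid n\Rightarrow 3\mid q$, which collapses the forbidden set modulo $3$ to $\{0\}$. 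Consequently every prime $p_i\mid n$ leaves at least $p_i-3\ge2$ admissible residues (respectively $p_i-1\ge2$ when $p_i\mid q$), so by the Chinese Remainder Theorem admissible residues modulo $n$ exist; the only issue is landing one inside $I$. Here $n>(q+1)^2$ enters, giving $q+1<\sqrt n$ and hence $|I|=n-2q-1>n-2\sqrt n$. I would then split on the squarefree-ness of $n$. If $n$ is not squarefree then $\operatorname{rad}(n)\le n/3$, the admissible integers form arithmetic progressions of common difference $\operatorname{rad}(n)$, and since $|I|>n-2\sqrt n\ge n/3\ge\operatorname{rad}(n)$ for $n\ge 9$, the interval $I$ contains a full period and hence an admissible $\alpha$. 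If $n$ is squarefree, write $n=Pm$ with $P$ the largest prime factor; fixing a residue modulo $m$ that is admissible for all smaller primes (possible by CRT), the $P$ integers of $[0,n)$ in that class are spaced $m$ apart and hit every residue modulo $P$ once, so at least $P-3$ of them are fully admissible, while at most $(2q+1)/m+2$ of them can fall in the excluded boundary $[0,q]\cup[n-q,n-1]$, leaving an admissible $\alpha\in I$.

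The main obstacle is this last, squarefree, case: the surplus $(P-3)-\bigl((2q+1)/m+2\bigr)$ is comfortably positive when $n$ has a large prime factor or several prime factors, but becomes tight exactly when $n$ is a product of a few small primes near $\sqrt n$ (for instance $n=21$ or $n=105$ at the minimal admissible $p$). I expect these finitely many configurations to need a direct check rather than the uniform estimate, and the real work lies in calibrating the boundary estimate sharply enough that $n>(q+1)^2$ alone suffices outside an explicitly verifiable list. Everything else — the embedding, the cycle structure, and the group action — is inherited unchanged from Theorems \ref{r1} and \ref{Archdeacon}.
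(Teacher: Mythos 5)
Your global structure is the paper's: deduce Theorem \ref{r2} from Theorem \ref{r1} (plus Theorem \ref{Archdeacon}) by exhibiting a suitable $\alpha$; your case (a) is the paper's trivial remark for prime $n$, your case (b) is exactly the paper's Lemma \ref{2} (same choice $\alpha=2p+2$, same verification), your reading of the hypothesis on $3$ matches Lemma \ref{1}, and your non-squarefree argument (the interval $I$ has length exceeding $\mathrm{rad}(n)\le n/3$ because $n>(2p+2)^2$) is precisely the first subcase of the paper's Lemma \ref{3}. The genuine gap is the squarefree subcase of (c), which you leave open. Your stated bound --- at least $P-3$ admissible integers in a fixed class modulo $m=n/P$, of which at most $(2q+1)/m+2$ can be lost to the border --- is inconclusive under the theorem's own hypotheses: for $n=21$, $p=1$ (a legitimate instance, since $21>16$, $3\mid 21$ and $p\equiv 1\md{3}$) it gives $4$ admissible candidates against a possible border loss of $7/3+2>4$. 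You anticipate this ("finitely many configurations to need a direct check"), but you never identify the exceptional list, never prove it is finite, and never check any member of it; as written, this part is a programme rather than a proof, and it is exactly the part of the theorem that is not routine.

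The idea you are missing is how Lemma \ref{3} of the paper disposes of squarefree $n$: it works with the \emph{smallest} prime factor $q_1$ of $n$ rather than the largest, and replaces counting by two explicit candidates. Fix admissible residues $b_i$ modulo each prime $q_i$ dividing $n$, and let $x,x'\in(0,n)$ be the CRT solutions that agree modulo every $q_i$ with $i\ge 2$ but use two distinct admissible residues $b_1\neq b_1'$ modulo $q_1$. Then $x\equiv x'\md{n/q_1}$, so $|x-x'|$ is a positive multiple of $n/q_1>\sqrt n\ge 2p+2$ (here $q_1<\sqrt n$ since $n$ is composite and squarefree), while each forbidden border $\{1,\dots,2p+1\}$ and $\{n-1-2p,\dots,n-1\}$ consists of only $2p+1$ consecutive integers; hence $x$ and $x'$ cannot both sit in the same border, and the paper concludes that one of them lies in $[2p+2,n-2-2p]$. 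This kills all of your "tight" configurations uniformly, with no exceptional list. One caution if you adopt this route: the quoted step tacitly assumes $x$ and $x'$ do not land in \emph{opposite} borders, and an unlucky choice of residues can realize that situation (for $n=77$, $p=3$ the pair $x'=5$, $x=71$ consists of two admissible values both outside $[8,69]$), so the residues $b_i$ must be chosen, or a further candidate invoked, to exclude it; this is a small and fixable point, in contrast to the unresolved core of your case (c).
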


We also establish a lower bound on the number of non-isomorphic biembeddings of cycle systems from equivalent Heffter arrays. 
Let ${\mathcal G}$ 
be a biembedding of two cycle decompositions of the complete graph on an orientable surface corresponding to a Heffter array $H=H(m,n;s,t)$.
It is clear that rearranging the rows and columns of $H$ simply changes the ordering of the cycle system equivalence classes and thus has no effect on ${\mathcal G}$.
Replacing every entry $x$ in a Heffter array by $-x$ has the effect of changing the direction of each directed edge in the corresponding embedding ${\mathcal G}$.
Finally, taking the transpose of a Heffter array $H$ is equivalent to swapping the colours of the faces in the corresponding embedding ${\mathcal G}$.
It is clear that for each of these equivalences, the associated orderings for $H$ can be adjusted accordingly.
In summary, two Heffter arrays $H$ and $H^{\prime}$ are said to be {\em equivalent} if one can be obtained from the other by (i) rearranging rows or columns; (ii) replacing every entry $x$ in $H$ with $-x$; and/or (iii) taking the transpose.

 The number of non-equivalent Heffter arrays $H(n,4p+3)$ that satisfy Theorem \ref{main2} is discussed in Section \ref{count2}.
Let $\mathcal{H}(n)$ represent the number of derangements on $[n]$. It is a well-known asymptotic result that
 $\mathcal{H}(n)\sim n!/e$.

\begin{theorem}
Let $n\equiv 1 \md{4}$, $p>0$ and $n\geq 4p+3$. If there exists $\alpha$ such that
  $2p+2\leq \alpha\leq n-2-2p$, $\mbox{\rm gcd}(n,\alpha)=1$, $\mbox{\rm gcd}(n,\alpha-2p-1)=1$ and
$\mbox{\rm gcd}(n,n-1-\alpha-2p)=1$,
 then there exists at least
$(n-2)(\mathcal{H}(p-2))^2\sim (n-2)[(p-2)!/e]^2$
non-equivalent globally simple integer Heffter arrays $H(n;4p+3)$, each with an ordering that is both simple and compatible.
\end{theorem}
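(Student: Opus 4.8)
The plan is to exhibit an explicitly parametrised family of arrays, all arising as variants of the single array constructed for Theorem~\ref{r1}, and then to show that distinct parameters yield non-equivalent arrays. Fix an admissible $\alpha$ as in the hypothesis, so that the construction behind Theorem~\ref{r1} (the variant of the \cite{BCDY} construction) is available, and write $k=4p+3$, so the support is $\{1,\dots,nk\}$. I would first isolate the degrees of freedom in that construction. The array is assembled from a rigid backbone --- the positions of the filled cells together with those entries pinned by the zero row-sum and column-sum conditions and by the value $\alpha$ --- into which two independent families of $p-2$ \emph{movable} entries (a ``positive'' family and its negative mirror) are inserted. Permuting each family by a bijection of $[p-2]$ should leave the integer Heffter and support conditions intact; what must be checked, by following the structure of the Theorem~\ref{r1} construction, is that these permutations also preserve the row/column sums and the partial-sum (simplicity) profiles, which I expect to hold because within a family the movable cells are interchangeable for these local conditions. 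Writing $\sigma,\tau$ for the two permutations and letting $s$ range over $n-2$ admissible values of an auxiliary shift parameter built into the backbone, we obtain a family $A_{s,\sigma,\tau}$.

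Next I would pin down which $\sigma,\tau$ are admissible. Global simplicity and the integer conditions hold for all $\sigma,\tau$ by the above, so the only remaining constraint is compatibility: that $\omega_r\circ\omega_c$ be a single $nk$-cycle. Re-running the cycle-tracking computation of Theorem~\ref{r1} with $\sigma,\tau$ inserted, I expect $\omega_r\circ\omega_c$ to decompose into one long cycle together with a fixed point for every index $i\in[p-2]$ with $\sigma(i)=i$ or $\tau(i)=i$; single-cyclicity therefore holds exactly when both $\sigma$ and $\tau$ are fixed-point-free, i.e. derangements of $[p-2]$. This yields $\mathcal{H}(p-2)$ admissible choices for each, hence $(n-2)\,\mathcal{H}(p-2)^2$ globally simple integer arrays carrying a simple and compatible ordering (each of which biembeds by Theorem~\ref{Archdeacon}).

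It remains to show these lie in $(n-2)\,\mathcal{H}(p-2)^2$ distinct equivalence classes, and this is the crux. I would argue that the equivalence operations act almost freely on the parameter triple. The filled-cell skeleton is designed so that a distinguished entry (say the cell carrying $\pm 1$, or the extreme value $\pm nk$) sits in a position that no nontrivial backbone-preserving row/column relabelling can move; this pins any skeleton-preserving row and column permutation to the identity, killing the $(n!)^2$ potential collapse coming from clause (i) of the equivalence definition. Two members of the family can then be equivalent only through such a trivial relabelling combined with global negation $x\mapsto -x$ and/or transposition. I would compute the induced action of these two involutions on $(s,\sigma,\tau)$ explicitly --- negation reversing the shift and interchanging the positive and negative movable families (hence swapping or conjugating $\sigma$ and $\tau$), transposition swapping $\sigma\leftrightarrow\tau$ and reflecting $s$ --- and then normalise the construction (for instance by fixing the sign of a chosen entry and an orientation of the shift) so that these residual symmetries are already factored out. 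With the normalisation in place, $(s,\sigma,\tau)\mapsto A_{s,\sigma,\tau}$ becomes injective up to equivalence, giving the stated count, after which $\mathcal{H}(p-2)\sim (p-2)!/e$ is the standard derangement estimate.

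The principal obstacle is precisely this rigidity-and-injectivity step: ruling out every ``accidental'' equivalence $A_{s,\sigma,\tau}\sim A_{s',\sigma',\tau'}$ for distinct parameters arising from a clever row/column permutation entangled with negation or transpose. My approach is to replace the unwieldy search over permutations by a single permutation-invariant feature of the array --- the signed sequence of entries read along the canonical $nk$-cycle $\omega_r\circ\omega_c$, anchored at the distinguished entry --- which is manifestly recoverable from $(s,\sigma,\tau)$ and transforms under negation and transpose in the controlled way above. Proving that this anchored sequence separates the parameters (after normalisation) is the technical heart of the argument; once it is in hand, the counting is immediate.
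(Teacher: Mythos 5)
Your parametrisation of the family (two permutations acting on interchangeable blocks of entries, plus an auxiliary shift with $n-2$ admissible values) does match the paper's degrees of freedom: the bijections $f_I,f_J$ applied to the coupled diagonals of the support-shifted array $A^\prime$, and the $n-2$ admissible translates of the $H(n;3)$ subarray $L$ from Theorem \ref{ladder}. But your central mechanism is backwards, and this is a genuine error rather than a presentational difference. You assert that global simplicity survives \emph{every} choice of $\sigma,\tau$ and that compatibility is what forces derangements, via fixed points of $\omega_r\circ\omega_c$ appearing at fixed points of $\sigma$ or $\tau$. In a construction of this type that cannot happen: permuting entries among a fixed set of filled cells leaves the filled-cell skeleton unchanged, and with the natural orderings $\omega_r\circ\omega_c$ is conjugate, via the bijection between entries and cells, to the cell-level map $\Omega_r\circ\Omega_c$, which depends only on which cells are filled. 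This is exactly the content of Lemma \ref{compat-diag}: compatibility is decided by the gaps between occupied diagonals, so it holds for \emph{all} $\sigma,\tau$, derangements or not; a fixed point of $\sigma$ cannot create a fixed point of $\omega_r\circ\omega_c$. Conversely, global simplicity is precisely what does \emph{not} survive arbitrary permutations once the $4p$-part is merged with the $H(n;3)$ subarray: the paper must impose $f_I(0)=0$, $f_I(i)\neq (2p-i+1)/2$ and $f_J(j)\neq (p-j-4)/4$ to prevent partial-sum collisions with the entries of $L$ (see the case analysis in the proof of Theorem \ref{Bprime}), and it is these ``one forbidden value per position'' constraints that produce the lower bound $\mathcal{H}(p-2)$ for each of $f_I$ and $f_J$. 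So your plan would both discard valid arrays (those with fixed-point permutations, which are perfectly compatible) and skip the verification that is actually needed (distinctness of partial sums in the merged array).

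The second gap is the non-equivalence step, which you yourself identify as the crux but leave as a programme (a normalisation plus an anchored-sequence invariant) rather than a proof. The paper settles it with a much shorter observation: by construction the diagonals $D_{2p}$ and $D_{2p+\alpha}$ do not depend on $f_I,f_J$, so any two members of the family agree on these two full diagonals (each meeting every row and every column exactly once) while differing elsewhere; this rules out obtaining one array from the other by row/column permutations, negation, and/or transposition. Some argument of this concrete kind is indispensable — without it your count of equivalence classes is unsupported — and the invariant-diagonal argument is considerably lighter than the anchored canonical-cycle invariant you propose.
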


Finally in Section \ref{count} we determine a lower bound on the number of non-isomorphic face $2$-colorable biembeddings of $K_{2nk+1}$ on orientable surfaces where the faces are of length $k$.

\begin{theorem} Let $n\equiv 1 \md{4}$, $p>0$, $n> k=4p+3$ and either: (a) $n$ is prime; (b) $n=4p+5$; or (c) $n>(2p+2)^2$ and either $n\not\equiv 0\md{3}$ or $p\equiv 1\md{3}$. Then there exists at least
$(n-2)[(p-2)!/e]^2/nk$ non-isomorphic
  face $2$-colourable biembeddings of $K_{2nk+1}$ on an orientable surface such that
the faces are of length $k$, each with
 a sharply vertex-transitive action
of $\mathbb{Z}_{2nk+1}$.
\end{theorem}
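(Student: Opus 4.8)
The plan is to chain the preceding Heffter-array count through Archdeacon's construction (Theorem \ref{Archdeacon}) and then to bound how many non-equivalent arrays can yield the same biembedding. First I would note that the hypotheses of this theorem are identical to those of Theorem \ref{r2}, and that each of the conditions (a), (b), (c) is precisely a situation in which the argument establishing Theorem \ref{r2} exhibits an integer $\alpha$ with $2p+2\le\alpha\le n-2-2p$ and $\gcd(n,\alpha)=\gcd(n,\alpha-2p-1)=\gcd(n,n-1-\alpha-2p)=1$. Hence the hypothesis of the immediately preceding theorem (counting non-equivalent globally simple integer Heffter arrays) is met, so there are at least $(n-2)[(p-2)!/e]^2$ pairwise non-equivalent globally simple integer Heffter arrays $H(n;4p+3)$, each carrying an ordering that is simple and compatible. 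Applying Theorem \ref{Archdeacon} to each, with $m=n$, $s=t=k=4p+3$, produces a face $2$-colourable biembedding of $K_{2nk+1}$ on an orientable surface in which every face is a $k$-cycle and on which $\mathbb{Z}_{2nk+1}$ acts sharply vertex-transitively.

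It remains to convert a count of non-equivalent arrays into a count of non-isomorphic biembeddings. As recorded in the discussion preceding the theorem, each of the three generating equivalences — permuting rows or columns, negating every entry, and transposing — leaves the associated biembedding unchanged up to isomorphism, so the assignment $H\mapsto\mathcal{G}(H)$ descends to a well-defined map $\Phi$ from equivalence classes of such arrays to isomorphism classes of biembeddings. The crux is then a single fibre bound: no more than $nk$ pairwise non-equivalent arrays can map to isomorphic biembeddings. Granting this, the image of $\Phi$ has at least $(n-2)[(p-2)!/e]^2/(nk)$ isomorphism classes, which is the assertion.

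To prove the fibre bound I would fix a biembedding $\mathcal{G}$ and reconstruct the array intrinsically. Since the $\mathbb{Z}_{2nk+1}$-action is regular on vertices and Archdeacon's construction depends only on the edge differences, the choice of base vertex is immaterial and yields no new array; the surface orientation, the labelling of the two colour classes, and the indexing of the rotation orbits are accounted for exactly by negation, transposition, and row/column permutation. Moreover any multiplier $x\mapsto ux$ with $u\ne\pm1$ destroys the small-representative structure of the support $\{1,\dots,nk\}$, so the only multiplier-type freedom is $u=-1$, already captured by negation. The one remaining freedom is the lack of a canonical starting cell when the single permutation $\omega_r\circ\omega_c$ is read off from the rotation around a vertex; as this is a single cycle of length $ms=nk$, there are at most $nk$ admissible rootings, whence at most $nk$ non-equivalent arrays in the fibre.

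The main obstacle is exactly this fibre lemma: one must show that every isomorphism between two biembeddings arising from the construction is induced by an equivalence of arrays together with a rooting of the length-$nk$ single cycle, with no further hidden symmetries. The key sub-steps are to verify that such an isomorphism necessarily carries the sharply vertex-transitive $\mathbb{Z}_{2nk+1}$-action of one biembedding to that of the other — i.e. that this cyclic action is canonical up to conjugacy, forcing the isomorphism to be affine on the vertex set — and then to track its effect on the rotation at the base vertex so as to express it through the three established equivalences modulo the cyclic rooting. Confirming canonicity of the action and that distinct rootings yield at most $nk$ equivalence classes is where the genuine work lies; the remainder is bookkeeping with the equivalences already recorded for $H$.
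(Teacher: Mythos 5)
Your first half coincides with the paper's: under (a), (b) or (c), Lemmas \ref{1}, \ref{2} and \ref{3} supply the required $\alpha$, Theorem \ref{main4} then yields at least $(n-2)[(p-2)!/e]^2$ pairwise non-equivalent globally simple integer Heffter arrays $H(n;k)$ with simple and compatible orderings, and Theorem \ref{Archdeacon} converts each into a face $2$-colourable biembedding of $K_{2nk+1}$ with the stated vertex-transitive action. You also correctly isolate the remaining issue as a fibre bound: at most $nk$ non-equivalent arrays can produce isomorphic biembeddings. But that fibre bound \emph{is} the theorem --- everything else is bookkeeping --- and you explicitly leave it unproven (``where the genuine work lies''). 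A proposal that defers exactly this lemma is not a proof.

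Moreover, the route you sketch for the fibre bound has concrete defects. First, your claim that a multiplier $x\mapsto ux$ with $u\neq\pm 1$ ``destroys the small-representative structure of the support'' is false: the Heffter conditions are invariant under multiplication by any unit modulo $2nk+1$ (row and column sums remain $0$, and the residue pairs $\{x,-x\}$ are permuted among themselves), so $uH$ is again a Heffter array, in general non-equivalent to $H$, whose biembedding is isomorphic to that of $H$ via the multiplier map. Such multiplier images are precisely the kind of arrays a fibre can contain --- up to about $\phi(2nk+1)/2\leq nk$ of them --- so an argument that rules multipliers out cannot be correct, and your final $nk$ then rests on a ``rooting'' count of the cycle $\omega_r\circ\omega_c$ whose connection to equivalence classes of arrays is never established. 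Second, the canonicity (up to conjugacy) of the $\mathbb{Z}_{2nk+1}$-action, which your plan requires, is assumed rather than proved, and it is not needed. The paper's own argument avoids it entirely by a rigidity count: an isomorphism $f$ between two such embeddings is completely determined by the image of one black face $C$ together with the image of one vertex $x$ incident with $C$ (orientation being preserved), since incidence then forces the rest of $f$; there are $n(2nk+1)$ choices for the image face and $k$ for the image vertex, and factoring out the $2nk+1$ automorphisms coming from the sharply vertex-transitive action leaves at most $nk$ essentially distinct isomorphisms, hence at most $nk$ non-equivalent arrays per isomorphism class. Substituting this counting argument for your canonicity/rooting sketch is what is needed to close the gap.
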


 Throughout this paper the set of integers $\{0,1,\dots ,n-1\}$ is denoted by $[n]$ and the rows and columns of an $m\times n$ array will be indexed by $[m]$ and $[n]$, respectively.

\section{$H(n;4p+3)$ with simple and compatible orderings}\label{Pmain2}

First we provide a proof of Theorem \ref{main2}.

\begin{theorem}
Let $n\equiv 1 \md{4}$, $p>0$ and $n> 4p+3$. If there exists $\alpha$ such that
  $2p+2\leq \alpha\leq n-2-2p$, $\mbox{\rm gcd}(n,\alpha)=1$, $\mbox{\rm gcd}(n,\alpha-2p-1)=1$ and
$\mbox{\rm gcd}(n,n-1-\alpha-2p)=1$,
 then there exists a globally simple integer Heffter array $H(n;4p+3)$ with orderings that are both simple and compatible.
\label{main2}
\end{theorem}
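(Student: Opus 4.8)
The plan is to exhibit a single explicit array and then verify, in turn, the four properties demanded by the statement: that it is an integer Heffter array $H(n;4p+3)$, that its natural row ordering is simple, that its natural column ordering is simple, and finally that the resulting $\omega_r$ and $\omega_c$ are compatible. The first three properties are, in spirit, a matter of adapting the construction already analysed in \cite{BCDY}; since that paper produced simple integer Heffter arrays $H(n;k)$ precisely in the regime $k\equiv 3\md{4}$, $n\equiv 1\md{4}$, the genuinely new ingredient here is compatibility, and this is where the hypotheses on $\alpha$ must be spent. A useful preliminary sanity check is that our parameters fall into the ``all odd'' case of Theorem \ref{compatt}: with $m=n$ and $s=t=k=4p+3$, both $n$ and $k$ are odd, so no parity obstruction forbids the existence of compatible orderings.

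For the construction I would begin from the diagonal-filling scheme of \cite{BCDY} and introduce $\alpha$ as a cyclic offset controlling the placement of the filled diagonals in the $n\times n$ array. Concretely, the $k=4p+3$ filled cells of each row would occupy columns forming a block of consecutive residues modulo $n$, and the signed values drawn from $\{\pm 1,\pm 2,\dots,\pm nk\}$ would be laid along the (broken) diagonals so that each magnitude in $\{1,\dots,nk\}$ appears exactly once up to sign. Writing $k=2(2p+1)+1$ makes visible the natural split into $2p+1$ diagonals of positive offset, $2p+1$ of negative offset, and one central diagonal; the signs and the arithmetic progressions along these diagonals would be chosen exactly as in \cite{BCDY} so that each row and each column sums to $0$ over $\mathbb{Z}$. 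This yields the integer Heffter array property immediately. Simplicity I would then verify by direct computation of the natural partial sums: because the entries read left to right along a row, and top to bottom down a column, run through controlled progressions of consecutive integers, the partial sums form a predictable sequence, and reducing modulo $2nk+1$ while using $n>4p+3$ to bound the magnitudes involved shows that within each row and within each column they are pairwise distinct. This is condition~(1), transported from \cite{BCDY} to the $\alpha$-shifted array.

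The main obstacle is compatibility: one must show $\omega_r\circ\omega_c$ is a single cycle of length $nk$. Here I would describe $\omega_r$ and $\omega_c$ explicitly as maps on the set of filled cells, with $\omega_c$ sending a cell to the next filled cell below it in its column (cyclically) and $\omega_r$ sending a cell to the next filled cell to its right in its row. Because of the regular diagonal structure, the composite $\omega_r\circ\omega_c$ acts on cell coordinates by an affine translation rule, broken into a few cases according to whether a step crosses the top, the bottom, or the interior of the diagonal block; tracking an orbit then reduces to iterating shifts by $\alpha$, by $\alpha-(2p+1)$ and by $\alpha+(2p+1)$ on the residues modulo $n$. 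The three coprimality hypotheses $\gcd(n,\alpha)=1$, $\gcd(n,\alpha-2p-1)=1$ and $\gcd(n,n-1-\alpha-2p)=1$ (the last of which is $\gcd(n,\alpha+2p+1)=1$ since $n-1-\alpha-2p\equiv-(\alpha+2p+1)\md{n}$) are exactly what force each of these shifts to generate $\mathbb{Z}_n$, so that a single orbit meets every row and every column and therefore exhausts all $nk$ filled cells. The delicate part of the argument, and the place where all three gcd conditions are genuinely needed, is the bookkeeping that shows an orbit passes correctly from one diagonal band to the next and that no proper sub-orbit can close up prematurely; once this is established the permutation is a single $nk$-cycle, the orderings are compatible, and the array meets every requirement of the theorem.
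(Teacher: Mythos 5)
Your construction and simplicity steps are fine in outline (they are exactly what is imported from \cite{BCDY}), but your compatibility argument has a genuine gap, and it is the central one. With the natural orderings on \emph{all} rows and columns --- $\omega_r$ moving right, $\omega_c$ moving down, as you specify --- the composite $\Omega_r\circ\Omega_c$ \emph{preserves each filled broken diagonal}. Indeed, starting from a cell on $D_{g(h)}$, the row step moves it to the next filled diagonal in the decreasing direction, $D_{g(h-1)}$, and the column step moves it back up to $D_{g(h)}$; the net effect is a shift by $s_h=g(h)-g(h-1)$ \emph{along} $D_{g(h)}$. Consequently, whatever coprimality conditions hold, the composite splits into (at least) $k$ disjoint cycles, one per diagonal, each of length $n$ when $\gcd(n,s_h)=1$ --- never a single $nk$-cycle. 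Your claim that the gcd conditions force ``a single orbit to meet every row and every column and therefore exhaust all $nk$ filled cells'' is a non sequitur: each individual diagonal already meets every row and every column, and the orbit simply never leaves its diagonal. The missing idea, which is how the paper's Lemma 2.2 proceeds, is to \emph{reverse} the natural ordering of exactly one row (row $n-1$); a reversed simple ordering is still simple, and now each passage of an orbit through row $n-1$ shifts it forward by \emph{two} diagonals ($D_{g(h)}\to D_{g(h+1)}$ by the reversed row step, then $D_{g(h+1)}\to D_{g(h+2)}$ by the column step). Since $k$ is odd, stepping by two in $\mathbb{Z}_k$ visits all $k$ diagonals, and coprimality of each gap $s_h$ with $n$ ensures each diagonal is fully traversed between successive visits to row $n-1$; only then does one obtain a single cycle of length $nk$. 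Note that the oddness of $k$, which your argument never uses for compatibility, is indispensable here.

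A secondary inaccuracy: the hypothesis $\gcd(n,\alpha)=1$ is not one of the ``orbit shifts.'' The gaps between the occupied diagonals $D_0,\dots,D_{4p-2},D_{2p+\alpha-3},D_{2p+\alpha-1},D_{2p+\alpha},D_{2p+\alpha+1}$ are $1$, $2$, $\alpha-2p-1$ and $n-2p-\alpha-1$; the conditions $\gcd(n,\alpha-2p-1)=1$ and $\gcd(n,n-1-\alpha-2p)=1$ (together with $n$ odd, which handles the gaps $1$ and $2$) are what the compatibility lemma consumes. The condition $\gcd(n,\alpha)=1$ is instead required by the \cite{BCDY} construction itself, because the diagonals $D_{2p}$ and $D_{2p+\alpha}$ are filled with stride $\alpha$, and coprimality is what guarantees those diagonals are completely filled.
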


Since the existence (via construction) of the globally simple Heffter arrays, $H(n;4p+3)$ is given in
\cite{BCDY}, it suffices to show that these Heffter arrays have orderings which are compatible and simple. To get this result we will apply Lemma \ref{compat-diag} which a generalization of results given in \cite{DM,CDP,CMPP}.

In what follows, for a partially filled array $A=[A(i,j)]$  we use $A(i,j)$ to denote the  entry  in cell $(i,j)$ of array $A$.
The cells of an $n\times n$ array can be partitioned into $n$ disjoint {\em diagonals} $D_d$, $d\in [n]$, where \begin{eqnarray*}
D_d:=\{(i+d,i)\mid i\in [n]\}.
\end{eqnarray*}
We will use the convention that if $\alpha$ and $\beta$ are two permutations acting on a set $X$, then $(\alpha\circ \beta)(x)$ is defined to be $\beta(\alpha(x))$, for each $x\in X$.

\begin{lemma}\label{compat-diag}
Assume that $k$ is odd and  that the non-empty cells of a Heffter array $H(n;k)$, $A=[A(i,j)]$ can be partitioned into diagonals $D_{g(1)},\dots,D_{g(k)}$, where $g(1)<g(2)<\dots <g(k)$.
 For $h=2,\dots, k$ define gaps of the diagonals as $s_h=g(h)-g(h-1)\md{n}$ and $s_1=g(1)-g(k)\md{n}$.
Suppose that for all $h=1,\dots,k$, {\rm gcd}$(n,s_h)=1$.
Then if $A$ is globally simple, the array $A$ has an ordering which is both simple and compatible.
\end{lemma}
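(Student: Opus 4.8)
The plan is to realise both natural orderings as explicit permutations of the cells in coordinates adapted to the diagonal structure, and then to perturb one ordering so as to fuse the resulting cycles into a single one.

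First I would coordinatise the filled cells. Since they form the $k$ full diagonals $D_{g(1)},\dots,D_{g(k)}$, every filled cell can be written uniquely as a pair $(h,i)$ with $h\in\Z_k$ and $i\in[n]$, denoting the cell of $D_{g(h)}$ lying in column $i$ (its row being $i+g(h)\md{n}$). In these coordinates I would check that the natural (left-to-right) row ordering acts as $\omega_r\colon (h,i)\mapsto(h-1,\,i+s_h)$ and the natural (top-to-bottom) column ordering acts as $\omega_c\colon(h,i)\mapsto(h+1,\,i)$; the point needing care is that, because $g(1)<\dots<g(k)$, the left-to-right order of the cells in a row always agrees with the cyclic order $g(k),g(k-1),\dots,g(1)$ of the diagonals, so one step to the right lowers $h$ by one and advances the column by the gap $s_h$. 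Composing (recall $(\alpha\circ\beta)(x)=\beta(\alpha(x))$) gives $\omega_r\circ\omega_c\colon(h,i)\mapsto(h,\,i+s_h)$, which fixes the diagonal index $h$ and translates the column by $s_h$; since $\gcd(n,s_h)=1$ for every $h$, its restriction to $D_{g(h)}$ is a single $n$-cycle $C_h$. Thus the purely natural ordering produces $k$ disjoint $n$-cycles and is \emph{not} compatible --- this is the obstruction the lemma must overcome.

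To merge the $C_h$ I would reverse the ordering of one column, say column $0$; by global simplicity its natural top-to-bottom ordering is simple, hence so is the reversed one, so the full ordering remains simple. Reversing column $0$ alters the composite only at the $k$ cells $(h,-s_h)$ whose image under $\omega_r$ lies in column $0$: for such a cell the reversed column sends $h-1\mapsto h-2$ instead of $h-1\mapsto h$, so $(h,-s_h)\mapsto(h-2,0)$, while every other cell still follows its cycle $C_h$. Writing $T'$ for the new composite, the crucial observation is that $(h,-s_h)$ is exactly the $T$-predecessor of $(h,0)$; hence once the orbit enters $C_h$ at $(h,0)$ it runs through all $n$ cells of $C_h$ and only then jumps, via $(h,-s_h)\mapsto(h-2,0)$, into $C_{h-2}$. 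The induced action on the set of cycles $\{C_h\}$ is therefore $h\mapsto h-2$, a single $k$-cycle precisely because $k$ is odd. Consequently $T'$ is one cycle meeting all $kn=nk$ cells, i.e. the (still simple) ordering is compatible, which proves the lemma.

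The main work, and the step I expect to be most delicate, is the bookkeeping behind $\omega_r$ and $\omega_c$ in the $(h,i)$ coordinates --- in particular verifying that one rightward step always decreases $h$ by exactly one regardless of where the row wraps around --- together with the final orbit count, where one must confirm both that each $C_h$ is traversed in full between consecutive jumps (guaranteed by $(h,-s_h)$ being the predecessor of $(h,0)$) and that the jumps $h\mapsto h-2$ visit every diagonal (guaranteed by $k$ odd). The hypotheses $\gcd(n,s_h)=1$ for all $h$ and $k$ odd enter at exactly these two places.
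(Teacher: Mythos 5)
Your proof is correct and takes essentially the same route as the paper's: the paper also perturbs the natural orderings by reversing exactly one line (the last row, where you reverse column $0$), and shows that the composite permutation traverses each diagonal completely (using $\gcd(n,s_h)=1$) before jumping to the diagonal whose index differs by $2$, which generates all $k$ diagonals precisely because $k$ is odd. The two arguments are mirror images of one another under transposition, yours being, if anything, slightly cleaner in first exhibiting the unperturbed composite as $k$ disjoint $n$-cycles and then splicing them.
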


\begin{proof}
We define an ordering for the Heffter array in terms of the natural orderings of each row (left to right) and column (top to bottom).
Let $\alpha_r=\phi_r$ for each row $r$, $r\in [n-1]$ and let
$\alpha_{n-1}=\phi_{n-1}^{-1}$, where $\phi_r$ is the natural ordering for each row $r\in [n]$.
For each column $c$, $c\in [n]$, let $\alpha_c=\phi_c$, where $\phi_c$ is the natural ordering for column $c$.

Observe that each $\alpha_r$ and $\alpha_c$ defined above are simple by definition.
Next, define $\omega_r$ and $\omega_c$ to be compositions of the orderings $\alpha_r$, $r\in [n]$ and $\alpha_c$, $c\in [n]$, respectively.
It remains to show that $\omega_r$ and $\omega_c$ are compatible orderings, that is $w_r\circ w_c$ can be written as a single permutation of length $nk$.
While we have defined compatible orderings based on entries above, such orderings can also be defined on the cells of an array.

To this end define mappings, $\Omega_r$ and $\Omega_c$, on the non-empty cells of $A$ as follows
\begin{eqnarray*}
\Omega_r((i,j))&=&(i,j^\prime)\mbox{ iff }\omega_r(A(i,j))=A(i,j^\prime),\mbox{ and}\\
\Omega_c((i,j))&=&(i^\prime,j)\mbox{ iff }\omega_c(A(i,j))=A(i^\prime,j).
\end{eqnarray*}
Then $\omega_r\circ \omega_c$ can be written as a single cycle if and only if $\Omega_r\circ \Omega_c$ can be written as a single cycle.
For simplicity, we will abuse notation and remove brackets writing $\Omega_r(i,j)$  instead of $\Omega_r((i,j))$; similarly for $\Omega_c(i,j)$.

For fixed $h$, consider the diagonals $D_{g(h)}, D_{g(h+1)},  D_{g(h+2)}$ and cell $(n-1,n-1-g(h))\in D_{g(h)}$.
Then working modulo $n$ on the row and column indices,
\begin{eqnarray}
(\Omega_r\circ \Omega_c)(n-1,n-1-g(h))&=&\Omega_c(n-1, n-1-g(h+1))\nonumber \\
&=&(s_{h+2},n-1-g(h+1)),\nonumber\\
\therefore (\Omega_r\circ \Omega_c)^{2}(n-1,n-1-g(h))&=&( 2s_{h+2}-1,n-g(h+1)+s_{h+2}-1),\nonumber\\
\therefore (\Omega_r\circ \Omega_c)^i(n-1,n-1-g(h))&=&(is_{h+2}-1, n-g(h+1)+(i-1)is_{h+2}-1),  1\leq i\leq n,  \nonumber\\
\therefore (\Omega_r\circ \Omega_c)^{n}(n-1,n-1-g(h))&=&(n-1, n-1-g(h+2)).\nonumber
\end{eqnarray}
Now since $k$ is odd and each $s_{h}$ is coprime to $n$, we see that mapping $\Omega_r\circ \Omega_c$ is a full cycle of length $nk$.
\end{proof}

\begin{theorem} {\rm \cite{BCDY}}\label{4p+3}
Let $n\equiv 1 \md{4}$, $p>0$ and $n\geq 4p+3$. If there exists $\alpha$ such that {\rm gcd}$(\alpha,n)=1$ and
$2p+2\leq \alpha\leq n-2-2p$,
then
 there exists a globally simple Heffter array $H(n;4p+3)$, denoted by $B$, with occupied cells on the set of diagonals
$$D_0,D_1,\dots,D_{4p-2},D_{2p+\alpha-3},D_{2p+\alpha-1},D_{2p+\alpha},D_{2p+\alpha+1}.$$
\end{theorem}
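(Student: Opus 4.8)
The plan is to produce the array $B$ by an explicit rule and then check, in turn, the three defining conditions of an (integer) Heffter array together with the global simplicity condition. The starting observation is that in an $n\times n$ array every diagonal $D_d$ meets each row and each column in exactly one cell; hence prescribing the occupied cells to be the $k=4p+3$ diagonals $D_0,\dots,D_{4p-2},D_{2p+\alpha-3},D_{2p+\alpha-1},D_{2p+\alpha},D_{2p+\alpha+1}$ immediately gives exactly $k$ filled cells in each row and in each column, settling the first Heffter condition for free. I would then partition the support $\{1,2,\dots,nk\}$ into $k$ blocks of size $n$, assign one block to each diagonal, and fix the location of each magnitude along its diagonal as an explicit (piecewise linear) function of the row index in $[n]$, together with a sign pattern on the block; since the blocks partition $\{1,\dots,nk\}$, the support condition holds by construction.

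Next I would verify that $B$ is an integer Heffter array, i.e.\ that every row and every column sums to $0$ over $\Z$. Here the key structural fact is that a row sum and a column sum each consist of exactly one entry from every diagonal, with that entry pinned down by the placement rule (reading along a row the selected column decreases with the diagonal index, and reading down a column the selected row increases with it). The magnitudes within a diagonal and the signs on the blocks are to be chosen precisely so that these $k$ contributions telescope to $0$. Because each diagonal wraps around the top/bottom edge of the array, the placement rule is only piecewise linear, so this check breaks into a bounded number of cases according to how many of the $k$ diagonals have already ``wrapped'' in the row (or column) under consideration; each such case is a routine arithmetic identity.

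The main obstacle is global simplicity: under the natural left-to-right ordering of each row and top-to-bottom ordering of each column, the $k$ partial sums must be pairwise distinct modulo $2nk+1$. Since the full row and column sums vanish over $\Z$, these partial sums are genuine integers, and the cleanest strategy is a confinement argument: if all $k$ partial sums of a given line lie in a common interval of length strictly less than $2nk+1$, then distinctness of the integers forces distinctness of their residues modulo $2nk+1$. I would therefore compute the partial sums explicitly from the placement and sign rules, show that they stay within such an interval, and then establish that the integer partial sums are distinct. It is exactly at this point that the hypothesis $\gcd(\alpha,n)=1$ enters: the parameter $\alpha$ controls the offset of the four special diagonals relative to the staircase $D_0,\dots,D_{4p-2}$, and its coprimality with $n$ is what prevents the $\alpha$-shifted contributions from realigning two partial sums to the same value. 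I expect the column case to be the delicate one, since there the wrap-around interacts with the $\alpha$-offset; carrying along the case analysis of wrapped versus unwrapped diagonals from the previous step is what should make the distinctness verification go through, completing the proof that $B$ is globally simple.
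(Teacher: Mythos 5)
Your outline captures two correct generalities: prescribing the filled cells as $4p+3$ full diagonals settles the row/column counts for free, and global simplicity is indeed proved by confining all partial sums of a line to an interval of length less than $2nk+1$ (this is observation (\ref{mods}) in the paper) and then checking distinctness over $\Z$. But there is a genuine gap: every step that carries the actual content of the theorem is deferred rather than carried out. You never exhibit the placement rule, the sign pattern, or the block-to-diagonal assignment; you only assert that they ``are to be chosen precisely so that these $k$ contributions telescope to $0$'' and that the partial sums ``stay within such an interval.'' For a constructive existence theorem this is a restatement of the goal, not a proof. Moreover, the uniform scheme you commit to --- one block of $n$ consecutive magnitudes per diagonal, placed by a piecewise linear function of the row index --- is left entirely unverified and is not how the cited construction works. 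In the paper (following \cite{BCDY}), $B$ is obtained by \emph{merging} two separately constructed arrays: the support-shifted simple integer Heffter array $A=H(n;4p,3)$ of (\ref{A}), with support $\{3n+1,\dots,(4p+3)n\}$ on the diagonals $D_0,\dots,D_{4p-2},D_{2p+\alpha}$, and an $H(n;3)$ array $L$ of Theorem \ref{ladder}, with support $\{1,\dots,3n\}$ on $D_{2p+\alpha-3},D_{2p+\alpha-1},D_{2p+\alpha+1}$ (see Theorem \ref{4p+3-2}). Note that in $L$ the two outer diagonals \emph{share} the block $\{n+1,\dots,3n\}$, so ``one block per diagonal'' already fails there. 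This two-layer decomposition is what makes the verification tractable: each piece is simple on its own, the $L$-entries are small in absolute value, and the $A$-partial sums are confined to ranges (Lemma \ref{partsum}) that entries of size at most $3n$ cannot bridge.

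Second, you misplace the role of the hypothesis $\gcd(\alpha,n)=1$. It is not there to ``prevent the $\alpha$-shifted contributions from realigning two partial sums''; it is needed for the array to be well defined at all. In (\ref{A}) the entries of the two special diagonals are placed in cells $(2p-\alpha x,-\alpha x)$ and $(2p+\alpha+\alpha x,\alpha x)$ for $x\in[n]$, i.e.\ consecutive values jump $\alpha$ cells along $D_{2p}$ and $D_{2p+\alpha}$; coprimality is exactly the condition for $x\mapsto \alpha x \bmod n$ to be a bijection of $[n]$, so that every cell of these diagonals receives exactly one entry. The jumps themselves are engineered so that the $D_{2p}$-entry and the $D_{2p+\alpha}$-entry pair to $-1$ in every row and to $+1$ in every nonzero column --- this pairing, already guaranteed once the placement is a bijection, is what the partial-sum estimates actually use. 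So as written, your proposal neither produces a concrete array nor correctly isolates where the arithmetic hypothesis acts, and filling these holes would essentially amount to reconstructing the \cite{BCDY} construction from scratch.
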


Now observe that the gaps between the diagonals for the Heffter array in Theorem \ref{4p+3} are of size either $1$, $2p+\alpha-3-(4p-2)=\alpha-2p-1$, $2$ or $n-(2p+\alpha+1)$. So Lemma \ref{compat-diag} together with Theorem \ref{4p+3} then imply Theorem \ref{main2}.

\subsection{Existence of a suitable $\alpha$} \label{Pmain3}

In this section we will give some lemmata using number theory to determine when a suitable $\alpha$ exists for use in Theorem \ref{main2} to prove Theorem \ref{main3}.

\begin{theorem}\label{main3}
Let $n\equiv 1 \md{4}$, $p>0$, $n> 4p+3$ and either: (a) $n$ is prime; (b) $n=4p+5$; or (c) $n>(2p+2)^2$ and; either $n\not\equiv 0\md{3}$ or $p\equiv 1\md{3}$.
 Then there exists a globally simple integer Heffter array $H(n;4p+3)$ with an ordering that is both simple and compatible.
Furthermore [by Theorem $\ref{Archdeacon}$], there exists a face $2$-colourable embedding ${\mathcal G}$ of $K_{2n(4p+3)+1}$ on an orientable surface such that
the faces of each color are cycles of length $4p+3$.
Moreover, $\mathbb{Z}_{2n(4p+3)+1}$ has a sharply vertex-transitive action on ${\mathcal G}$.
\end{theorem}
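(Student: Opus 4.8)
Theorem \ref{main3} is an existence statement for a suitable $\alpha$ satisfying the three coprimality constraints of Theorem \ref{main2}, broken into three cases. The plan is to reduce the whole problem to a number-theoretic question about finding $\alpha$ in the interval $[2p+2, n-2-2p]$ that simultaneously avoids the divisors of $n$ in three linear shifts, and then handle each case (a), (b), (c) separately.

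$$\text{I would set } \beta_1 = \alpha, \quad \beta_2 = \alpha - (2p+1), \quad \beta_3 = (n-1) - \alpha - 2p,$$

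and note that we need $\gcd(n,\beta_i)=1$ for $i=1,2,3$, with $\alpha$ confined to the stated interval. First I would dispose of case (a): if $n$ is prime, then the only way $\gcd(n,\beta_i)>1$ is $\beta_i \equiv 0 \md n$, and since each $\beta_i$ lies in a range of magnitude less than $n$, there are at most three forbidden values of $\alpha$ in the interval $[2p+2, n-2-2p]$; as $n > 4p+3$ the interval has length $n-4p-5 \geq 0$ and in fact comfortably more than three admissible slots once $n$ is large enough relative to $p$, so a valid $\alpha$ survives. For case (b), $n=4p+5$, the interval $[2p+2, n-2-2p] = [2p+2, 2p+3]$ has exactly two candidates, and I would check directly (using $n \equiv 1 \md 4$, so $p$ even) that at least one of $\alpha=2p+2$ or $\alpha=2p+3$ makes all three gcd's equal to $1$ — this is a small explicit verification against the prime factors of the specific modulus $4p+5$.

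The substantive case is (c), where $n > (2p+2)^2$ and $n$ is composite with a divisibility side-condition. Here the natural tool is an inclusion–exclusion / sieve argument over the distinct prime divisors $q_1,\dots,q_r$ of $n$. For each prime $q \mid n$, the set of $\alpha$ in a length-$L$ interval for which $q$ divides at least one of $\beta_1,\beta_2,\beta_3$ has density at most $3/q$, so the number of \emph{bad} $\alpha$ is at most roughly $\sum_q (3L/q + 3)$. Since the interval length is $L = n - 4p - 5$, I would want to show this bad count is strictly less than $L$, guaranteeing a good $\alpha$. The bound $n > (2p+2)^2$ is exactly what controls the error terms: it forces the interval to be long relative to $\sqrt n$, and since $n$ has at most $O(\log n)$ prime factors with the smallest beyond a point being large, the $\sum 3/q$ contribution stays below $1$. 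The role of the condition ``$n \not\equiv 0 \md 3$ or $p \equiv 1 \md 3$'' is to handle the single small prime $q=3$, whose density $1/3$ is too large to absorb in the generic sieve; when $3 \mid n$ one must check that $3$ does not simultaneously obstruct all residues, and $p \equiv 1 \md 3$ is the arithmetic condition ensuring the three shifts $\beta_1,\beta_2,\beta_3$ do not cover every residue class mod $3$.

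The hard part will be case (c), and specifically making the sieve estimate rigorous rather than heuristic: the crude bound $\sum_{q \mid n} 3/q$ can exceed $1$ when $n$ has several small prime factors, so the argument cannot be purely asymptotic and must treat the smallest primes ($2$, $3$, $5$, $\dots$) by hand. I expect the cleanest route is to separate the prime $2$ (noting $n$ is odd since $n \equiv 1 \md 4$, so $2 \nmid n$ and contributes nothing), then treat $3$ via the stated congruence condition, and finally bound the combined contribution of all primes $q \geq 5$ dividing $n$ using $n > (2p+2)^2$ together with the observation that at most one prime divisor of $n$ can be small while the product of the rest is large. Once each obstruction is shown to exclude a bounded-density subset of the interval and the total excluded count is proven strictly below the interval length, existence of $\alpha$ follows, and the conclusion is immediate from Theorem \ref{main2} and Theorem \ref{Archdeacon}.
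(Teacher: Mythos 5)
Your overall reduction --- find $\alpha\in[2p+2,\,n-2-2p]$ satisfying the three gcd conditions, then invoke Theorem \ref{main2} and Theorem \ref{Archdeacon} --- is exactly the paper's structure, and your cases (a) and (b) are essentially right, with two small repairs. In case (a), your ``at most three forbidden values'' bound is not sufficient when $n=4p+5$ happens to be prime, since the interval then has only two elements; the correct observation is stronger and simpler: each of $\beta_1,\beta_2,\beta_3$ lies strictly between $0$ and $n$, so a prime $n$ forbids nothing and every $\alpha$ in the (nonempty) interval works. In case (b) the explicit check you defer is immediate with $\alpha=2p+2$: then $\beta_2=1$, $\beta_3=2$, and $\gcd(4p+5,2p+2)=1$ because $4p+5=2(2p+2)+1$ (your parenthetical ``$n\equiv 1\md{4}$, so $p$ even'' is false --- $4p+5\equiv 1\md{4}$ for every $p$ --- but nothing depends on it); this is the paper's Lemma \ref{2}.

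The genuine gap is case (c): the sieve cannot be repaired along the lines you describe. First, the union bound fails outright, not just up to error terms: for $n=5\cdot7\cdot11\cdot13=5005\equiv 1\md{4}$ and $p=1$ one has $\sum_{q\mid n}3/q=3(1/5+1/7+1/11+1/13)>1.5$, so the estimated count of bad $\alpha$ exceeds the interval length, and your proposed rescue (``at most one prime divisor of $n$ can be small while the product of the rest is large'') is simply not a true statement about integers --- $n$ may have many small prime factors. Second, and more fundamentally, no counting argument, however exact, can close this case: for $n=105=3\cdot5\cdot7$ and $p=4$ (admissible, since $105>100=(2p+2)^2$ and $p\equiv 1\md{3}$), the number of residues modulo $105$ satisfying all three conditions is exactly $(3-1)(5-3)(7-3)=16$, while the complement of the interval $[2p+2,n-2-2p]=[10,95]$ inside $[0,104]$ has $19$ elements; so the counts alone are consistent with every good residue lying outside the interval, and one must exploit the \emph{structure} of the good set, not its density. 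That is what the paper's Lemma \ref{3} does, constructively: for each prime $q_i\mid n$ choose a residue $b_i$ avoiding the forbidden classes (at most three of them, and collapsing to the single class $\{0\}$ when $q_i=3$ and $p\equiv 1\md{3}$, which is where the mod-$3$ hypothesis enters --- your reading of that condition is correct); combine the $b_i$ by the Chinese remainder theorem into a solution $x$ modulo the radical $Q$ of $n$; if some $q^2\mid n$ then $Q\leq n/q\leq n/3<n-4p-4$, so a translate of $x$ modulo $Q$ already lies in the interval; otherwise $n=Q$ and one builds a second CRT solution $x'$ differing from $x$ only in its residue modulo $q_1$, so that $x\equiv x'\md{n/q_1}$ and $|x-x'|\geq n/q_1>\sqrt{n}>2p+2$, and uses this spacing to place one of $x,x'$ inside the interval. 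This constructive CRT step is the idea your sketch is missing; without it, case (c) does not go through.
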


\begin{lemma}\label{1}
If $n\equiv 0\md{3}$ and $p\not\equiv 1\md{3}$, there does not exist $\alpha<n$ such that $\mbox{\rm gcd}(n,\alpha)=1$,
$\mbox{\rm gcd}(n,\alpha-2p-1) =1$ and
$\mbox{\rm gcd}(n,n-(\alpha+2p+1))=1$.
\end{lemma}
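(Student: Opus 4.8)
The plan is to reduce everything modulo $3$ and argue by exhaustion of residue classes. The crucial observation is that since $n\equiv 0\md{3}$, any integer $x$ with $\gcd(n,x)=1$ must in particular satisfy $x\not\equiv 0\md{3}$. Hence the three required coprimality conditions immediately yield three necessary congruence restrictions on $\alpha$ modulo $3$:
$$\alpha\not\equiv 0,\qquad \alpha-2p-1\not\equiv 0,\qquad n-(\alpha+2p+1)\not\equiv 0\md{3}.$$
First I would simplify the third restriction using $n\equiv 0\md{3}$, rewriting it as $\alpha\not\equiv -(2p+1)\md{3}$, and rewrite the second as $\alpha\not\equiv 2p+1\md{3}$. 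Thus a valid $\alpha$ would have to avoid the three residue classes $0$, $2p+1$, and $-(2p+1)$ modulo $3$.

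The heart of the argument is then to show that these three forbidden residues exhaust all of $\mathbb{Z}/3\mathbb{Z}$ exactly when $p\not\equiv 1\md{3}$. I would note that $2p+1\equiv 0\md{3}$ if and only if $p\equiv 1\md{3}$. Consequently, under the hypothesis $p\not\equiv 1\md{3}$ we have $2p+1\equiv 1$ or $2\md{3}$, and in either case the set $\{0,\,2p+1,\,-(2p+1)\}$ equals $\{0,1,2\}$ modulo $3$. Therefore no residue class is available for $\alpha$, so no $\alpha$ simultaneously satisfying all three coprimality conditions can exist, which is the claim.

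There is no real obstacle here beyond bookkeeping: the argument is a short case analysis on the value of $2p+1\md{3}$. The only point requiring care is the treatment of the third condition, where the congruence $n\equiv 0\md{3}$ must be invoked to collapse $n-(\alpha+2p+1)$ to $-(\alpha+2p+1)\md{3}$; without this reduction the symmetry between the second and third forbidden classes is obscured. It is also worth observing in passing that when $p\equiv 1\md{3}$ the three restrictions degenerate to the single condition $\alpha\not\equiv 0\md{3}$, which both explains why that case is excluded from the statement and indicates why it is precisely the case in which a suitable $\alpha$ can be found.
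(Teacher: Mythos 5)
Your proof is correct and follows essentially the same route as the paper: reduce all three coprimality conditions modulo $3$ (using $3\mid n$) and exhaust residue classes, with the key fact being that $2p+1\equiv 0\md{3}$ iff $p\equiv 1\md{3}$. If anything, your packaging---showing the forbidden set $\{0,\,2p+1,\,-(2p+1)\}$ covers all of $\mathbb{Z}/3\mathbb{Z}$ when $p\not\equiv 1\md{3}$---is tidier than the paper's four-case split, whose contradictions are asserted but not written out.
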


\begin{proof}
Let $n\equiv 0\md{3}$ and $p\not\equiv 1\md{3}$. Suppose there exists an
$\alpha$ that satisfies all three of the gcd conditions.
Then $\mbox{\rm gcd}(n,\alpha)=1$  implies $\alpha\not\equiv 0\md{3}$, hence we have two options $\alpha\equiv 1$ or $2 \md{3}$. Since $p\not\equiv 1 \md{3}$, there are further two options to consider: $p\equiv 0$ or $2\md{3}$. Thus there are four cases to consider in all. However each of these cases leads to a contradiction.
\end{proof}

\begin{lemma}\label{2}
If $n=4p+5$, $\alpha=2p+2$ satisfies the conditions
 $2p+2\leq \alpha\leq n-2-2p$,
$\mbox{\rm gcd}(n,\alpha)=1$,
$\mbox{\rm gcd}(n,\alpha-2p-1)=1$ and
$\mbox{\rm gcd}(n,n-1-\alpha-2p)=1$.
\end{lemma}

\begin{lemma}\label{3}
Let $n\geq (2p+2)^2$ and $n$ be odd. Further if $n\equiv 3\md{6}$ then $p\equiv 1\md{3}$. Then there exists $\alpha$ satisfying
$2p+2\leq \alpha\leq n-2-2p$,
$\mbox{\rm gcd}(n,\alpha)=1$, $\mbox{\rm gcd}(n,\alpha-2p-1)=1$ and
$\mbox{\rm gcd}(n,n-1-\alpha-2p)=1$.
\end{lemma}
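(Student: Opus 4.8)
The plan is to recast the three coprimality conditions symmetrically and then reduce the problem to placing one admissible residue inside the required interval. First I would set $m=2p+1$ (so $m$ is odd and $m\ge 3$) and observe that $\gcd(n,\,n-1-\alpha-2p)=\gcd(n,\alpha+m)$. The three conditions then read
$$\gcd(n,\alpha)=\gcd(n,\alpha-m)=\gcd(n,\alpha+m)=1,$$
while the range $2p+2\le\alpha\le n-2-2p$ becomes $m+1\le\alpha\le n-m-1$, which forces $\alpha-m,\ \alpha,\ \alpha+m$ all to lie in $[1,n-1]$. Hence it suffices to produce an $\alpha$ in the interval $I:=[m+1,\,n-m-1]$ that avoids the residues $0,\ m,\ -m$ modulo every prime $q\mid n$.

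Next I would analyse the primes one at a time. For $q\mid n$ let $b_q$ denote the number of distinct residues among $0,m,-m$ modulo $q$: if $q\mid m$ they coincide and $b_q=1$, whereas if $q\nmid m$ then, $q$ being odd, they are distinct and $b_q=3$. The hypothesis that $n\equiv 3\md 6$ forces $p\equiv 1\md 3$ is exactly what makes $3\mid m$ whenever $3\mid n$, so that $b_3=1$ in that case; thus every prime $q\mid n$ leaves at least $q-b_q\ge 2$ admissible residues, the sole obstruction $q=3,\ b_3=3$ being excluded. By the Chinese Remainder Theorem the number of admissible residues in a complete residue system is therefore
$$f(n)=\prod_{q^{e}\,\|\,n}q^{e-1}(q-b_q)=n\prod_{q\mid n}\Bigl(1-\frac{b_q}{q}\Bigr)>0,$$
so admissible residues certainly exist; the work is to land one in $I$.

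I would then exploit the length of $I$. Since $n\ge(m+1)^2$ we have $|I|=n-2m-1\ge m^2$ and also $|I|\ge n-2\sqrt n$, so $I$ is almost a full period. If $n$ is not squarefree then $\mathrm{rad}(n)\le n/3\le|I|$, and I can finish immediately: choose one admissible residue modulo $\mathrm{rad}(n)$ by the Chinese Remainder Theorem (possible since each prime admits at least two residues); its representatives are spaced $\mathrm{rad}(n)$ apart, so the window $I$ of length $\ge\mathrm{rad}(n)$ must contain one. The remaining case is $n$ squarefree, which I would reduce to the two short end-blocks: using the admissibility-preserving symmetry $\alpha\mapsto n-\alpha$, the number of admissible $\alpha$ in $I$ equals $f(n)-2A$, where $A$ counts admissible $\alpha$ in the short interval $[1,m]$, so it remains to prove $A<f(n)/2$.

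The main obstacle is precisely this last estimate in the squarefree case, and it is where the small prime divisors of $n$ demand care: a crude inclusion--exclusion (Legendre) bound on $A$, or equivalently on the sieve error over $I$, is too wasteful once $n$ has several small factors, as the squarefree example $n=3\cdot5\cdot7$ already illustrates. I would bound $A$ by sieving the short interval $[1,m]$ using only the smallest prime divisors of $n$ and compare against the multiplicative lower bound $f(n)\ge 2^{\omega}\,n/\mathrm{rad}(n)$, where $\omega$ is the number of distinct primes dividing $n$; equivalently, one shows that no $|I|$ consecutive integers are all inadmissible, i.e.\ that the relevant Jacobsthal-type gap is smaller than $|I|\ge n-2\sqrt n$. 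The genuinely delicate primes throughout are $q=3$ (removed by hypothesis), $q=5$ (which realises the extremal ratio $q-b_q=2$), and $q=7$; these I would dispatch by hand, after which the larger primes contribute only a convergent (Mertens/Brun) tail and leave room to spare, since $n\ge(m+1)^2$ forces $n$ to be large as soon as it carries many small prime factors.
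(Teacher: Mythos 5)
Your reformulation (forbidden residues $0,\pm m$ with $m=2p+1$, the count $b_q$, the use of the hypothesis to force $b_3=1$ whenever $3\mid n$) and your non-squarefree case (an admissible class modulo $\mathrm{rad}(n)\le n/3\le |I|$ must hit $I$) are correct and coincide with the paper's argument. The genuine gap is the squarefree case, which is the crux of the lemma: you reduce it -- correctly, via the symmetry $\alpha\mapsto n-\alpha$ -- to the inequality $A<f(n)/2$, but you never prove that inequality. The final paragraph is a plan, not an argument (``I would dispatch by hand'', ``contribute only a convergent tail''), and, worse, the plan as stated cannot succeed: you propose to compare a sieve upper bound for $A$ (sieving $[1,m]$ by the \emph{small} primes dividing $n$) against the lower bound $f(n)\ge 2^{\omega}n/\mathrm{rad}(n)$. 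Take squarefree $n=q_1q_2$ with $q_1\approx q_2\approx\sqrt n$: there are no small prime divisors to sieve with, so your upper bound on $A$ is essentially $m\approx\sqrt n$, while $2^{\omega}=4$; the comparison fails even though the desired inequality is true, because there $f(n)=(q_1-b_{q_1})(q_2-b_{q_2})\approx n$. Any workable version of your argument must therefore use the full product $f(n)=\prod_{q\mid n}(q-b_q)$, not $2^{\omega}$; and the ``Jacobsthal-type gap'' reformulation is, as you yourself note, just a restatement of what is to be proved.

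For comparison, the paper settles the squarefree case with no counting at all. Having fixed admissible residues $b_i$ modulo each prime factor, it considers the two CRT solutions $x,x'$ in $(0,n)$ that differ only in the residue modulo the smallest prime $q_1$ (two admissible residues exist there since $q_1-b_{q_1}\ge 2$). Then $x\equiv x'\pmod{n/q_1}$, so $|x-x'|\ge n/q_1>\sqrt n\ge 2p+2$, while the complement of the target interval in $(0,n)$ consists of two end blocks of length only $2p+1$; hence (with a suitable choice of the second residue) one of $x,x'$ lies in the interval. If you prefer to finish your counting route, the efficient observation is not a sieve but a pigeonhole: trivially $A\le m$, and one checks that $f(n)=\prod_{q\mid n}(q-b_q)>2m$ for every squarefree odd composite $n\ge(m+1)^2$ satisfying the hypotheses, except for finitely many cases built only from the primes $3,5,7$ (essentially $(n,m)=(105,9)$), which are verified directly. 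Either way, the analytic machinery (Brun, Mertens, Jacobsthal bounds) you invoke is both unexecuted and unnecessary.
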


\begin{proof}
The proof is trivial if $n$ is prime. Otherwise let $q_1<q_2<\dots <q_h$ be the prime factors of $n$ where $h\geq 2$.
For each $i$, there exists $0<b_i<q_i$ such that $b_i-2p-1\not\equiv 0\md{q_i}$ and
$-1-b_i-2p\not\equiv 0\md{q_i}$. (Note that if $q_1=3$, we need $p\equiv 1\md{3}$ here for $b_1$ to exist.)
By the Chinese remainder theorem, there is a unique $x$ satisfying
$x\equiv b_i\md{q_i}$ for each $1\leq i\leq h$ and $0<x< q_1q_2\dots q_h$.
Observe that if $2p+2\leq x\leq n-2-2p$, then $\alpha=x$ has the required properties and we are done.

Otherwise we need to make some adjustments to $x$.
Let $Q=q_1q_2\dots q_h$.
Suppose there is a prime $q$ such that $q^2$ divides $n$.
Since $n\geq (2p+2)^2$, we have that $(n-2-2p)-(2p+2)=n-4p-4>n/3\geq n/q\geq Q$, so there exists
$\alpha\equiv x\md{Q}$ such that $2p+2\leq \alpha\leq n-2-2p$ and we are done.

Otherwise $n=Q$.
Let $b_1^{\prime}\neq b_1$ satisfy the same properties as $b_1$ above (note that such a $b_1^{\prime}$ exists even if $q_1=3$).
 By the Chinese remainder thorem, there is a unique $x^{\prime}$ satisfying
$x^{\prime}\equiv b_1^{\prime}\md{q_1}$, $x^{\prime}\equiv b_i\md{q_i}$ for each $2\leq i\leq h$ and $0<x^{\prime}< q_1q_2\dots q_h$.
Moreover, $x\equiv x^{\prime} \md{n/q_1}$ and without loss of generality $x-x^{\prime}\geq n/q_1$.
Since $q_1$ is the least prime that divides composite $n$, $n/q_1>\sqrt{n}$.
Since $n\geq (2p+2)^2$, we thus have that $x-x^{\prime}> 2p+2$. It follows that at least one of $\alpha=x$ or $\alpha=x^{\prime}$ satisfies
 $2p+2\leq \alpha\leq n-2-2p$.
\end{proof}

Then Theorem \ref{main3} follows directly from Theorem \ref{main2} and previous Lemmas.

\section{Non-equivalent globally simple Heffter arrays, $H(n;4p+3)$}\label{Pmain4}

In this section we work towards proving Theorem \ref{main4}.

\begin{theorem}
Let $n\equiv 1 \md{4}$, $p>0$ and $n\geq 4p+3$. If there exists $\alpha$ such that
  $2p+2\leq \alpha\leq n-2-2p$, $\mbox{\rm gcd}(n,\alpha)=1$, $\mbox{\rm gcd}(n,\alpha-2p-1)=1$ and
$\mbox{\rm gcd}(n,n-1-\alpha-2p)=1$,
 then there exists at least
$(n-2)(\mathcal{H}(p-2))^2\sim (n-2)[(p-2)!/e]^2$
non-equivalent globally simple integer Heffter arrays $H(n;4p+3)$, each with an ordering that is both simple and compatible.
\label{main4}
\end{theorem}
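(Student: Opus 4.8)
The plan is to reduce Theorem \ref{main4} to producing many pairwise non-equivalent globally simple integer Heffter arrays supported on the \emph{same} diagonal set as the base array $B$ of Theorem \ref{4p+3}. This reduction is essentially free: the occupied diagonals $D_0,\dots,D_{4p-2},D_{2p+\alpha-3},D_{2p+\alpha-1},D_{2p+\alpha},D_{2p+\alpha+1}$ have consecutive gaps $1$, $\alpha-2p-1$, $2$ and $n-(2p+\alpha+1)$, each coprime to $n$ under the stated gcd hypotheses, so Lemma \ref{compat-diag} applies to \emph{every} globally simple integer Heffter array on this support. Consequently each array I construct on these diagonals automatically carries an ordering that is both simple and compatible, and the whole problem collapses to counting non-equivalent arrays.

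I would then build a family of $(n-2)(\mathcal H(p-2))^2$ such arrays by modifying $B$ while keeping its diagonal support fixed. The key observation is that permuting entries \emph{within} fixed cells is not among the allowed equivalences (which only permit rearranging whole rows or whole columns, negation, and transposition), so such modifications typically produce genuinely new arrays. I would isolate two structurally independent blocks of $p-2$ cells, acted on by permutations $\sigma$ and $\tau$ respectively, chosen so that applying them leaves each row and column summing to $0$ over $\Z$ and keeps the support (the set of absolute values) equal to $\{1,\dots,n(4p+3)\}$, thereby preserving the integer Heffter property. The crucial claim is that the modified array stays globally simple precisely when $\sigma$ and $\tau$ have no fixed points: a fixed index would reinstate a partial-sum collision that the construction of $B$ was designed to avoid, whereas a derangement can be shown to keep every row and column partial sum distinct modulo $2n(4p+3)+1$. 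This contributes the factor $(\mathcal H(p-2))^2$. The remaining factor $n-2$ I would obtain from a further one-parameter family of admissible relabelings of the entries along the long block of diagonals, indexed by $n-2$ values of a parameter $c$ that leave all row and column sums and all partial sums intact.

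The hardest step, and the crux of the argument, is to show that distinct parameter triples $(c,\sigma,\tau)$ yield non-equivalent arrays; this requires controlling all three equivalence operations simultaneously. I would argue via an invariant preserved by each operation that nonetheless separates the parameters: a row/column rearrangement can only cyclically rotate the ordered gap sequence of the occupied diagonals, and since the four gaps are distinct and coprime to $n$ this severely restricts the admissible rearrangements, while global negation and transposition act rigidly on the diagonal set. Pinning down where the entries left untouched by $\sigma$ and $\tau$ must be sent then lets me recover $(c,\sigma,\tau)$ from the equivalence class. I expect the interaction of transposition with negation to be the principal obstacle, since together they can interchange the $\sigma$-block with the $\tau$-block, and ruling out the spurious coincidences arising this way is exactly what pins the count at $(n-2)(\mathcal H(p-2))^2$ rather than letting it collapse under the symmetry group. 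Equivalently, one could produce a larger family and divide by the bounded order of the group of equivalence operations fixing the diagonal support, but the direct non-equivalence argument is what matches the stated bound.
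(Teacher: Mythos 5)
Your opening reduction is exactly the paper's: the filled cells lie on diagonals whose consecutive gaps are $1$, $2$, $\alpha-2p-1$ and $n-(2p+\alpha+1)$, all coprime to $n$ under the stated gcd hypotheses, so Lemma \ref{compat-diag} hands a simple and compatible ordering to \emph{any} globally simple integer Heffter array on that support, and only the counting of non-equivalent arrays remains. The gap is everything after that. You never exhibit the construction: which cells form the ``two structurally independent blocks'', how $\sigma$ and $\tau$ act on them, or what the parameter $c$ is; consequently the claims that row and column sums stay zero, that the support is preserved, and that all partial sums remain distinct modulo $2n(4p+3)+1$ are unverified assertions. In the paper this is where essentially all the work lies: the family is defined explicitly in \eqref{Aprime} by replacing the coefficients $4in$, $4jn$ in the base array $A$ of Theorem \ref{ccddyy} with $4f_I(i)n$, $4f_J(j)n$ for bijections $f_I:I\rightarrow I$ and $f_J:J\rightarrow J$ (so the objects being permuted are the $p$, respectively $p-1$, coupled diagonal pairs, each of $2n$ cells --- not blocks of $p-2$ cells); global simplicity of each $A^{\prime}$ is then established by a lengthy case analysis of partial sums, and Theorem \ref{Bprime} repeats such an analysis after merging $A^{\prime}$ with the $H(n;3)$ array $L$. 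The factor $n-2$ counts the cyclic shifts of $L$ allowed by Theorem \ref{ladder} subject to $\{L(\beta,0),-L(\beta+4,0)\}\cap\{2n-1\}=\emptyset$.

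More seriously, your ``crucial claim'' --- that the modified array is globally simple \emph{precisely when} $\sigma$ and $\tau$ are fixed-point-free --- cannot be correct for any family obtained by modifying $B$ in place: taking $\sigma=\tau=\mathrm{id}$ returns $B$ itself, which is globally simple by Theorem \ref{4p+3}, so fixed points do not ``reinstate a partial-sum collision''. In the actual proof derangements play no structural role whatsoever; the constraints forced by global simplicity of the merged array are $f_I(0)=0$, $f_I(i)\neq(2p-i+1)/2$ and $f_J(j)\neq(p-j-4)/4$, i.e.\ at most one forbidden image per index (and these forbidden images are generally not the identity values), and $\mathcal{H}(p-2)$ enters only as a lower bound on the number of bijections avoiding such a system of forbidden values. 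Your non-equivalence argument is also weaker than what is needed: arbitrary row and column permutations do not preserve diagonals, so the assertion that a rearrangement ``can only cyclically rotate the gap sequence'' would itself require proof. The paper avoids this entirely with a direct observation you could have made from the construction: the entries on $D_{2p}$ and $D_{2p+\alpha}$ are independent of $(f_I,f_J)$ and of $L$, so any two members of the family agree on these two diagonals while differing elsewhere, which rules out their being related by row/column rearrangement, negation, and/or transposition.
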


We start with a generalization of Heffter arrays. An array $A$ is defined to be a {\em support shifted simple integer Heffter array} $H(n;4p,\gamma)$ if it satisfies the following properties:
\begin{itemize}
\item[{\bf P1.}] Every row and every column of $A$ has $4p$ filled cells.
\item[{\bf P2.}] $s(A)=\{\gamma n+1,\dots,(4p+\gamma)n\}$.
\item[{\bf P3.}] Elements in every row and every column sum to $0$.
\item[{\bf P4.}] Partial sums are distinct in each row and each column of $A$  modulo $2(4p+\gamma)n+1$.
\end{itemize}

A related generalization of Heffter arrays is studied in \cite{ItalE2}. Note that a support shifted integer Heffter array $H(n;4p,0)$ is in fact an integer Heffter array $H(n;4p)$. Support shifted simple integer Heffter arrays were constructed for all $n\geq 4p$ and $\gamma>1$ in \cite{BCDY}. Then these arrays for $\gamma=3$ were merged with a Heffter array $H(n;3)$ to obtain simple Heffter arrays $H(n;4p+3)$. In this section we first document the existing constructions from \cite{BCDY} then we will generalize these constructions to obtain $(p-1)!(p-2)!$ non-equivalent support shifted simple $H(n;4p,\gamma)$. Then as in \cite{BCDY} we will merge each of these arrays with Heffter arrays $H(n;3)$ to prove Theorem \ref{main4}.

\subsection{Existing results on support-shifted simple integer Heffter arrays, $H(n;4p,\gamma)$}
First, we outline the precise results needed from \cite{BCDY}.

For an $n\times n$ array let the entries in  row $a$ and column $a$ of diagonal $D_i$ be denoted by $d_i(r_a)$ and $d_i(c_a)$ respectively, with these values defined to be $0$ when there is no entry.
For a given row $a$ we define  $\Sigma(x)=\sum_{i=0}^x d_i(r_a)$ and
for a given column $a$ we define  $\overline{\Sigma}(x)=\sum_{i=0}^x d_i(c_a)$.
For a given row $a$, the values of $\Sigma(x)$ such that $d_x(r_a)$ is non-zero are called the {\em row partial sums} for $a$.
For a given column $a$, the values of $\overline{\Sigma}(x)$ such that $d_x(c_a)$ is non-zero are called the {\em column partial sums} for $a$.
Thus to show that a Heffter array $H(n;k)$ is globally simple, it suffices to show that the row partial sums are distinct (modulo $2nk+1$) for each row $a$ and that the column partial sums are distinct (modulo $2nk+1$) for each column $c$.
It is important to be aware that throughout this section, row and column indices are {\em always} calculated modulo $n$, while entries of arrays are {\em always} evaluated as integers.

\begin{remark} It will be useful to refer to the following basic observations. Let $m$, $x_1$, $x_2$, $\alpha_1$, $\alpha_2$, $\beta_1$, $\beta_2$ be integers and $m>0$. Then for:
 \begin{eqnarray}
 -m\leq x_1,x_2\leq m,&& x_1\equiv x_2\md{2m+1}\Rightarrow x_1=x_2;  \label{mods}\\
 0\leq x_1,x_2< m,&& x_1\equiv x_2\md{m} \Rightarrow x_1=x_2; \label{modl}\\
 -\frac{m}{2}<\alpha_1,\alpha_2<\frac{m}{2},&& \beta_1m+\alpha_1=\beta_2m+\alpha_2\Rightarrow \beta_1=\beta_2\ and\ \alpha_1=\alpha_2; \label{n}\\
 -m<x_1<0<x_2<m,&& x_1\equiv x_2\md{m}\Rightarrow x_2=m+x_1.  \label{mod=}
\end{eqnarray}
\end{remark}

In \cite{BCDY} a globally simple array $A$ was constructed as follows.
Let $\gamma>0$,  $n\geq 4p$, $2p-1\leq \alpha\leq n-2p-1$, and gcd$(\alpha,n)=1$. Define $I=[p]$, $J=[p-1]$ and $A=[A(i,j)]$ to be the $n\times n$ array with filled cells defined by the $4p$ diagonals
$$D_{2i}, D_{2i+1}, D_{2p}, D_{2p+1+2j}, D_{2p+2+2j},D_{2p+\alpha},$$
where $ i\in I$ and $ j\in J$, and with entries for each $x\in [n]$:
\begin{eqnarray}
(\gamma+2)n+4in-2x&\mbox{ in cell}&(2i-x,-x)\in  D_{2i}\nonumber,\nonumber\\
-\gamma n-4in-1-2x &\mbox{ in cell}&(2i+1+x,x)\in D_{2i+1},\nonumber \\
-(4p+\gamma)n+2x&\mbox{ in cell}&(2p-\alpha x,-\alpha x)\in  D_{2p},\label{A}\\
(4p+\gamma-6)n-4jn+1+2x&\mbox{ in cell}&(2p+1+2j-x,-x)\in  D_{2p+1+2j},\nonumber\\
-(4p+\gamma-4)n+4jn+2x&\mbox{ in cell}&(2p+2+2j+x,x)\in  D_{2p+2+2j},\nonumber\\
(4p+\gamma-2)n+1+2x&\mbox{ in cell}&(2p+\alpha+\alpha x,\alpha x)\in  D_{2p+\alpha}\nonumber.
\end{eqnarray}

\begin{theorem} {\rm (Theorem 3.1 of \cite{BCDY})}\label{ccddyy}
Let $\gamma>0$,  $n\geq 4p$, $2p-1\leq \alpha\leq n-2p-1$, and {\rm gcd}$(\alpha,n)=1$. Then the array $A$ constructed above is a support-shifted simple integer Heffter array $H(n;4p,\gamma)$.%
%
\end{theorem}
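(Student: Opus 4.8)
The plan is to verify the four defining properties P1--P4 directly for the array $A$ of \eqref{A}, treating the entries as integers and all row/column indices as residues modulo $n$, and writing $M=2(4p+\gamma)n+1$ for the relevant modulus.

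Properties P1 and P2 are essentially bookkeeping. For P1, the $4p$ chosen diagonals are $D_0,D_1,\dots,D_{4p-2}$ together with $D_{2p+\alpha}$; the bound $2p-1\le\alpha\le n-2p-1$ gives $4p-1\le 2p+\alpha\le n-1$, so the index $2p+\alpha$ avoids $\{0,\dots,4p-2\}$ and all $4p$ diagonals are distinct modulo $n$. Since each diagonal meets every row and every column in exactly one cell, every line has exactly $4p$ filled cells. For P2, I would compute for each diagonal the multiset of absolute values of its entries as $x$ ranges over $[n]$: each diagonal supplies either the even or the odd integers inside a block $[(\gamma+2t)n+1,(\gamma+2t+2)n]$ of length $2n$. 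Pairing each ``even'' diagonal with its ``odd'' partner ($D_{2i}$ with $D_{2i+1}$; $D_{2p+1+2j}$ with $D_{2p+2+2j}$; and $D_{2p}$ with $D_{2p+\alpha}$) fills one such block exactly, and a routine parity-and-range check shows the resulting $2p$ blocks tile $\{\gamma n+1,\dots,(4p+\gamma)n\}$ without gap or overlap. As there are $4pn$ entries and $|s(A)|$ must equal $4pn$, this simultaneously gives P2 and shows every entry has a distinct absolute value.

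The computation that organizes both P3 and P4 is a telescoping identity for the natural (diagonal-index) ordering. For a fixed row $a$ the two parameters attached to a pair $D_{2l},D_{2l+1}$ are congruent to $2l-a$ and $a-2l-1$ modulo $n$, hence sum to $n-1$, so $d_{2l}(r_a)+d_{2l+1}(r_a)=2n-1-2(n-1)=1$; likewise each pair $D_{2p+1+2j},D_{2p+2+2j}$ contributes $-1$, and the off-pattern pair $D_{2p},D_{2p+\alpha}$ (whose parameters sum to $n-1$, using $\gcd(\alpha,n)=1$) contributes $-1$. Thus the row sum is $p\cdot1+(p-1)(-1)+(-1)=0$, which is P3 for rows; columns are handled the same way, the only wrinkle being that the analogous parameter pairs sum to $n$ (or to $0$ in the boundary column $c=0$), so one checks that the boundary discrepancies cancel across the diagonals. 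The same telescoping yields closed forms for the partial sums: the odd-position first-half sums are simply $\Sigma(2i-1)=i$ for $i=1,\dots,p$, the even-position sums are $\Sigma(2i)=i+(\gamma+4i+2)n-2x_i$, and the second-half sums descend from $\Sigma(2p)=p-(4p+\gamma)n+2x_{2p}$ by successive $-1$ steps interleaved with the positive jumps on the odd diagonals, the final sum collapsing to $0$.

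The main obstacle is P4, distinctness of these partial sums modulo $M$. The key point is that, once the closed forms are in hand, one checks that every row partial sum lies in a single interval of width $M-2n-2<M$ (the maximum being the largest even-position first-half sum and the minimum the most negative second-half even sum); consequently two partial sums are congruent modulo $M$ if and only if they are equal as integers, and the wraparound difficulty disappears. It then remains to prove integer distinctness, which I would do by writing each sum as (a multiple of $n$, of coefficient at most $4p+\gamma$ in absolute value) plus a bounded correction smaller than the $n$-gaps, and comparing via \eqref{mods}, \eqref{modl}, \eqref{n} and \eqref{mod=}: distinct $n$-parts force distinct values, while equal $n$-parts occur only within one family, where the corrections are then seen to differ. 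The genuinely fiddly comparisons are among the ``small'' sums---the values $1,\dots,p$, the second-half odd-position sums, and the terminal $0$---where magnitude alone does not separate them and one must use the explicit residues; and the two $\alpha$-scaled diagonals $D_{2p},D_{2p+\alpha}$ require tracking parameters of the form $\alpha^{-1}c$, which is again where $\gcd(\alpha,n)=1$ is used. The column partial sums are treated by the identical scheme with rows and columns interchanged, paying attention to the boundary column. No individual case is hard once the telescoping and the width-$<M$ observation are established; the effort is entirely in organizing the case analysis.
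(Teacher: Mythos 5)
Your outline for P1--P3 is sound and matches the standard treatment (the paper itself does not reprove this theorem --- it imports it from \cite{BCDY} --- but its Section~\ref{Pmain4} verifies the generalization $A^{\prime}$ of $A$, of which $A$ is the identity case $f_I=\mathrm{id}$, $f_J=\mathrm{id}$, by exactly the telescoping pair-sums you describe). The genuine gap is in your P4 argument, and it is the cornerstone of that argument: the claim that \emph{all} partial sums lie in an interval of width less than $M=2(4p+\gamma)n+1$, so that congruence modulo $M$ reduces to integer equality and ``the wraparound difficulty disappears.'' This is true for rows and for columns $a\neq 0$, but it is false for column $0$. In column $0$ the pairs $(D_{2i},D_{2i+1})$ contribute $2n-1$ rather than $\pm 1$, so by (\ref{zeroi}) the positive partial sums climb to $\overline{\Sigma}(2(p-1))=(6p+\gamma-4)n-p+1$, while by (\ref{04p-2}) the negative ones descend to $\overline{\Sigma}(4p-2)=-(4p+\gamma-2)n-1$. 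The total span is $(10p+2\gamma-6)n-p+2$, which exceeds $M=(8p+2\gamma)n+1$ as soon as $p\geq 4$. Hence in column $0$ two partial sums of opposite sign can a priori be congruent modulo $M$ without being equal, and this must be excluded by a genuine modular case analysis, not by a width bound.

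Moreover this exclusion is not vacuous: e.g.\ setting $\overline{\Sigma}(2i)\equiv\overline{\Sigma}(2p+2j+2)\md{M}$, i.e.\ $\overline{\Sigma}(2i)=M+\overline{\Sigma}(2p+2j+2)$ via (\ref{mod=}), forces (matching $n$-coefficients and remainders as in (\ref{n})) the near-miss $i=p$, which only just fails because $i\leq p-1$; this is exactly the paper's case 1(v), and cases 1(iii), 1(iv), 2(i)--2(iii) handle the other wrap-around collisions, all relying on the bound (\ref{star4}) $|\overline{\Sigma}(x)|\leq 2(4p+\gamma)n+1$ for column $0$ together with (\ref{mod=}). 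Your proposal, by asserting that wrap-around cannot occur, skips precisely this block of work; as stated it does not establish P4 for column $0$ (for $p\geq 4$), so the simplicity of the array --- the substantive content of the theorem --- is not proved. The fix is to treat column $0$ separately: compute its partial sums exactly (they are exact values, not ranges, since $x$ is determined), and run the mod-$M$ comparisons allowing a discrepancy of exactly $M$ between sums of opposite sign.
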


\begin{remark}
Choose $\alpha=2p-1$ when constructing $A$, then $2p-1\leq \alpha\leq n-2p-1$ and {\rm gcd}$(\alpha,n)=1$. Hence, if $\gamma>0$ and $n\geq 4p$, then there exists a support shifted simple integer Heffter array $H(n;4p,\gamma)$.
\end{remark}

From Equations (12) and (13) in \cite{BCDY}, we have the following lemma.
\begin{lemma} \label{partsum}{\rm \cite{BCDY}}
 The row partial sums and the column partial sums of $A$ satisfy the following inequalities.
\begin{eqnarray}
\Sigma(4p-2)<\Sigma(4p-4)<\dots<\Sigma(2p+2)<\Sigma(2p)<-(4p+\gamma-3)n<0 \nonumber\\
0< \Sigma(1) <\Sigma(3)<\dots<\Sigma(2p-1)<\gamma n.  \label{Rowparsum4p}
\end{eqnarray}\begin{eqnarray}
-(4p+\gamma+1)n<\overline{\Sigma}(2p)<\overline{\Sigma}(2p+2)<\dots<\overline{\Sigma}(4p-2)<\overline{\Sigma}(2p)+p<-n\nonumber\\
 -n<\overline{\Sigma}(2p-1)<\dots<\overline{\Sigma}(3)<\overline{\Sigma}(1)<0 .\label{Colparsum4p}
  \end{eqnarray}
\end{lemma}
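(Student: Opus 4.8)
The plan is to reconstruct, for a single fixed row (and then a single fixed column) of $A$, the closed forms of the partial sums that underlie Equations (12) and (13) of \cite{BCDY}, and then to read the four inequality chains off those closed forms. The first step is pure bookkeeping: for a fixed row $a$ and a filled diagonal $D_d$, the unique filled cell of that row on $D_d$ is determined, and solving the incidence condition for the parameter $x$ appearing in (\ref{A}) expresses the entry $d_d(r_a)$ as an explicit integer of the form $(\text{multiple of } n) \pm 2x$, where $x\in\{0,\dots,n-1\}$ is a fixed residue of a linear function of $a$; for the diagonal $D_{2p}$ this inversion uses $\gcd(\alpha,n)=1$. I would record these closed forms once and for all, and likewise for a fixed column.

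The heart of the argument is a telescoping observation. I would pair the diagonals as $(D_{2i},D_{2i+1})$ for $i\in[p]$ and $(D_{2p+1+2j},D_{2p+2+2j})$ for $j\in[p-1]$. For a fixed row $a$ the two $x$-values occurring in a pair are residues that sum to exactly $n-1$; substituting into (\ref{A}), the $n$-terms and the $2x$-terms cancel, leaving the constant contribution $d_{2i}(r_a)+d_{2i+1}(r_a)=1$ for a first-block pair and $-1$ for a second-block pair. Consequently the odd partial sums collapse to the arithmetic progression $\Sigma(2i+1)=i+1$, which is exactly chain (\ref{Rowparsum4p}): the values $1<2<\dots<p$ are positive, strictly increasing, and bounded above by $\gamma n$ since $\gamma\ge 1$ and $n\ge 4p$. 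The same cancellation makes each step of the second block decrease the running sum by $1$, giving the strict monotonicity $\Sigma(4p-2)<\dots<\Sigma(2p+2)<\Sigma(2p)$.

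It remains to locate the even block below $-(4p+\gamma-3)n$, which is where the single ``jump'' diagonal $D_{2p}$ enters. Here $\Sigma(2p)=p+d_{2p}(r_a)$ with $d_{2p}(r_a)=-(4p+\gamma)n+2x$ for some $x\in\{0,\dots,n-1\}$; bounding $2x\le 2n-2$ and using $n\ge 4p>p-2$ yields $\Sigma(2p)<-(4p+\gamma-3)n$, after which the $-1$ steps keep the whole block below this threshold. The column chains (\ref{Colparsum4p}) follow from the mirror-image computation: for a fixed column the first-block pairs contribute $-1$ and the second-block pairs $+1$, so the odd column sums are $-1,-2,\dots,-p$ and the even block increases from $\overline{\Sigma}(2p)=-p+d_{2p}(c_a)$; the bounds $-(4p+\gamma+1)n<\overline{\Sigma}(2p)$ and $\overline{\Sigma}(4p-2)<\overline{\Sigma}(2p)+p<-n$ again reduce to $0\le x\le n-1$ together with $n\ge 4p$ and $\gamma\ge 1$.

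The step I expect to be the main obstacle is the residue bookkeeping in the column case. For rows the two indices in a pair always sum to $n-1$, so the cancellation is uniform over every row; but for a fixed column the two diagonals of a pair share the same column, and their indices sum to $a+\langle -a\rangle_n$, which equals $n$ for every column \emph{except} the column $a\equiv 0\md{n}$, where it degenerates to $0$ and the pair contribution changes sign and magnitude. That exceptional column does not satisfy the stated chains verbatim; I would dispose of it by a direct check that its partial sums are nonetheless distinct, or equivalently by carrying out the comparison modulo $2(4p+\gamma)n+1$ as required by property {\bf P4}. Apart from this degenerate column, the remaining work is purely the careful conversion of the closed forms into strict inequalities, each of which reduces to an elementary estimate of the bounded term $2x<2n$ against a multiple of $n$, controlled throughout by the hypotheses $n\ge 4p$ and $\gamma>0$.
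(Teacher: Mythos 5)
Your proof is correct, and it follows essentially the route that underlies the result. Note that the paper itself does not prove this lemma but imports it from \cite{BCDY} (``From Equations (12) and (13)''); the machinery you use --- solving the incidence conditions for $x$, pairing $D_{2i}$ with $D_{2i+1}$ and $D_{2p+1+2j}$ with $D_{2p+2+2j}$ so that the two $x$-values sum to $n-1$ in a row (pair sums $+1$ and $-1$) and to $n$ in a nonzero column (pair sums $-1$ and $+1$), then bounding $\Sigma(2p)=p+d_{2p}(r_a)$ and $\overline{\Sigma}(2p)=-p+d_{2p}(c_a)$ via $0\leq 2x\leq 2n-2$ together with $n\geq 4p$ and $\gamma\geq 1$ --- is exactly the machinery this paper deploys for the generalized array $A^{\prime}$ in Lemma \ref{Sums}, Lemma \ref{bbound} and inequalities (\ref{Range2i})--(\ref{Col2j+1}), of which $A$ is the special case where $f_I$ and $f_J$ are identities. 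Your one substantive addition is the observation that column $0$ is exceptional: there the two $x$-values of each pair sum to $0$ rather than $n$, the pair contributions become $2n-1$ and $-2n+1$, and the chains in (\ref{Colparsum4p}) fail verbatim (e.g.\ $\overline{\Sigma}(1)=2n-1>0$). This is a genuine imprecision in the lemma as stated --- the column inequalities should be read as applying only to columns $a\neq 0$ --- and your proposed remedy (a direct distinctness check for column $0$ modulo $2(4p+\gamma)n+1$) is precisely what the paper does: its distinctness argument invokes (\ref{Colparsum4p}) only for columns $a\neq 0$ and treats column $0$ by the separate case analysis 1(i)--2(v), where indeed $\overline{\Sigma}_{A^{\prime}}(2i+1)=2(i+1)n-(i+1)>0$. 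So your proof, with the column-$0$ caveat made explicit, is a faithful and complete reconstruction.
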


\subsection{The existence of many non-equivalent support-shifted simple integer Heffter arrays, $H(n;4p,\gamma)$}

In this section we reorder the entries in each column of the array $A$ given in the previous section to get a new array $A^\prime$, obtained by
applying a  bijection $f_I:I\rightarrow I$ to the entries in the coupled diagonals $D_{2i}$ and $D_{2i+1}$ of $A$ and a bijection $f_J:J\rightarrow J$  to the entries in the coupled diagonals $D_{2p+2j+1}$ and $D_{2p+2j+2}$ of $A$.

Let $\gamma>0$,  $n\geq 4p$, $2p-1\leq \alpha\leq n-2p-1$, and gcd$(\alpha,n)=1$. For each pair of functions
$(f_I,f_J)$, we construct an $n \times n$ array
$A^\prime$ with support $s(A^\prime)=\{\gamma n+1,\dots,(4p+\gamma)n\}$  as follows: for all $i\in I$, $j\in J$,  and $x\in [n]$  in $A^\prime$ place entry
\begin{eqnarray}\label{Aprime}
(\gamma+2)n+4f_I(i)n-2x&\mbox{ in cell}&(2i-x,-x)\in  D_{2i}\nonumber,\nonumber\\
-\gamma n-4f_I(i)n-1-2x &\mbox{ in cell}&(2i+1+x,x)\in D_{2i+1},\nonumber \\
-(4p+\gamma)n+2x&\mbox{ in cell}&(2p-\alpha x,-\alpha x)\in  D_{2p},\nonumber\\
(4p+\gamma-6)n-4f_J(j)n+1+2x&\mbox{ in cell}&(2p+1+2j-x,-x)\in  D_{2p+1+2j},\nonumber\\
-(4p+\gamma-4)n+4f_J(j)n+2x&\mbox{ in cell}&(2p+2+2j+x,x)\in  D_{2p+2+2j},\nonumber\\
(4p+\gamma-2)n+1+2x&\mbox{ in cell}&(2p+\alpha+\alpha x,\alpha x)\in  D_{2p+\alpha}.
\end{eqnarray}

We illustrate this new construction with an example.

\begin{example}
Here $n=17$, $p=3$, $\alpha=2p=(2\times 3)=6$, $f_{I}(0)=0$, $f_{I}(1)=2$, $f_{I}(2)=1$, $f_{J}(0)=1$ and $f_{J}(1)=0$.
\begin{scriptsize}
\begin{center}
\begin{tabular}{|c|c|c|c|c|c|c|c|c|c|c|c|c|c|c|c|c|}
\hline
85&&&&&252&&-173&172&-101&100&-253&-144&145&-216&217&-84\\
\hline
-52&53&&&&&224&&-171&170&-99&98&-225&-146&147&-218&219\\
\hline
221&-54&55&&&&&230&&-169&168&-97&96&-231&-148&149&-220\\
\hline
-188&189&-56&57&&&&&236&&-167&166&-95&94&-237&-150&151\\
\hline
153&-190&191&-58&59&&&&&242&&-165&164&-93&92&-243&-152\\
\hline
-120&121&-192&193&-60&61&&&&&248&&-163&162&-91&90&-249\\
\hline
-255&-122&123&-194&195&-62&63&&&&&254&&-161&160&-89&88\\
\hline
86&-227&-124&125&-196&197&-64&65&&&&&226&&-159&158&-87\\
\hline
-119&118&-233&-126&127&-198&199&-66&67&&&&&232&&-157&156\\
\hline
154&-153&116&-239&-128&129&-200&201&-68&69&&&&&238&&-155\\
\hline
-187&186&-115&114&-245&-130&131&-202&203&-70&71&&&&&244&\\
\hline
&-185&184&-113&112&-251&-132&133&-204&205&-72&73&&&&&250\\
\hline
222&&-183&182&-111&110&-223&-134&135&-206&207&-74&75&&&&\\
\hline
&228&&-181&180&-109&108&-229&-136&137&-208&209&-76&77&&&\\
\hline
&&234&&-179&178&-107&106&-235&-138&139&-210&211&-78&79&&\\
\hline
&&&240&&-177&176&-105&104&-241&-140&141&-212&213&-80&81&\\
\hline
&&&&246&&-175&174&-103&102&-247&-142&143&-214&215&-82&83\\
\hline
\end{tabular}
\end{center}
\end{scriptsize}
\end{example}

\begin{theorem} Let $p>0$, $n\geq 4p$, $(n,\alpha)=1$, $2p-1\leq \alpha\leq n-2p-1$ and $\gamma> 0$. Then there exists at least $(p-1)!(p-2)!$ non-equivalent support shifted simple integer Heffter arrays $H(n;4p,\gamma)$ where filled cells are precisely the set of diagonals $\{D_1,D_2,\dots,D_{4p-2},D_{2p+\alpha}\}$ .
\end{theorem}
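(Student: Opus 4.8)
The plan is to produce one array $A'=A'(f_I,f_J)$ from \eqref{Aprime} for each pair of bijections $f_I\colon I\to I$, $f_J\colon J\to J$, to check that every such $A'$ is a support-shifted simple integer Heffter array $H(n;4p,\gamma)$, and then to show that among these $p!\,(p-1)!$ arrays at least $(p-1)!\,(p-2)!$ are pairwise non-equivalent. Distinctness of the arrays is clear: the entry placed on $D_{2i}$ records $f_I(i)$ through its term $4f_I(i)n$, and the entry on $D_{2p+1+2j}$ records $f_J(j)$, so different parameter pairs give literally different arrays. Properties P1 and P2 are immediate, since $A'$ occupies exactly the cells of $A$ and, because $f_I$ and $f_J$ are bijections, the multiset of entries on each block of coupled diagonals is the same multiset used in \eqref{A}; hence $s(A')=s(A)=\{\gamma n+1,\dots,(4p+\gamma)n\}$.

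For P3 I would use the pairwise cancellation built into the coupling. In any fixed row or column the two cells coming from a coupled pair $D_{2i},D_{2i+1}$ carry the terms $+4f_I(i)n$ and $-4f_I(i)n$, which cancel; the pair $D_{2p+1+2j},D_{2p+2+2j}$ cancels $f_J(j)$ in the same way. Thus every row sum and column sum of $A'$ equals the corresponding sum of $A$, which is $0$ by Theorem \ref{ccddyy}. P4 is where the reordering could a priori fail, so I would argue it as follows. Because a completed coupled pair is $f$-independent, the partial sum taken immediately after each completed pair is identical to the corresponding partial sum of $A$; these are exactly the ``small'' partial sums clustered near $0$ whose strict ordering is recorded in \eqref{Rowparsum4p} and \eqref{Colparsum4p}, and that ordering is inherited unchanged. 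The remaining partial sums are those taken in the middle of a pair, after a single entry of $D_{2i}$ (or of $D_{2p+1+2j}$); each such sum lies in a band, an interval of width less than $2n$ centred at a multiple of $4n$ fixed by $f_I(i)$ (resp. $f_J(j)$). Since $f_I$ and $f_J$ are bijections, these bands are the same well-separated collection as for $A$, merely reassigned, and consecutive bands differ by at least $4n>2n$, so no two collide and none meets the cluster near $0$. Finally all $4p$ partial sums of a given row (or column) lie in an interval of length strictly less than the modulus $2(4p+\gamma)n+1$, so distinct integers there are incongruent modulo $2(4p+\gamma)n+1$; this gives P4.

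It remains to count non-equivalent arrays, which I regard as the main obstacle. The plan is to separate the family with an invariant $\Phi$ that is unaffected by the three equivalences: to each array associate the unordered pair whose two members are (the multiset over rows of the multiset of absolute values in that row) and (the same data for the columns). Permuting rows or columns leaves each multiset fixed, replacing $x$ by $-x$ preserves absolute values, and transposition merely swaps the two members of the pair, so $\Phi$ is constant on equivalence classes. The crux is to bound the fibres of $\Phi$ on the family: within a row, the $D_{2i}$ entry sits in the band $4f_I(i)n$ and equals $(\gamma+2)n+4f_I(i)n-2\big((2i-a)\bmod n\big)$, so reading the bands off the absolute values recovers, for the row indexed by $a$, the differences $2\big(f_I^{-1}(\ell)-f_I^{-1}(\ell')\big)\bmod n$; as $\gcd(2,n)=1$ and $|f_I^{-1}(\ell)-f_I^{-1}(\ell')|<p<n$, these determine $f_I$ up to a cyclic relabelling of $I$, and likewise $\Phi$ determines $f_J$ up to a cyclic relabelling of $J$. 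Hence each fibre of $\Phi$ contains at most $|I|\,|J|=p(p-1)$ members of the family, and the $p!\,(p-1)!$ distinct arrays realise at least $p!\,(p-1)!/\big(p(p-1)\big)=(p-1)!\,(p-2)!$ distinct values of $\Phi$, hence at least that many equivalence classes.

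The delicate point I expect to fight is making this fibre bound exact: controlling precisely how the unknown row shift $a$ interacts with the symmetrisation over columns and under transposition, so that the only residual ambiguity is the claimed $p(p-1)$-fold cyclic relabelling of the index sets and no further collisions of $\Phi$-values are introduced. The verifications of P1--P4, by contrast, are routine once the cancellation in coupled pairs and the band-separation estimate are in place.
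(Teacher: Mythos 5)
Your verification of P1--P3 and your treatment of the rows and the nonzero columns essentially reproduce the paper's own argument (the cancellation of $\pm 4f_I(i)n$ inside coupled pairs is Lemma \ref{Sums}, and your band picture is what (\ref{Range2i}), (\ref{Range2j+1}), (\ref{Col2i}), (\ref{Col2j+1}) encode). The genuine gap is column $0$, which your proof never separates out and for which \emph{both} of your key claims fail. In column $0$ each coupled pair sums to $2n-1$, not $\mp 1$ (Lemma \ref{Sums}), so the mid-pair partial sums are $\overline{\Sigma}_{A^\prime}(2i)=(2n-1)i+(\gamma+2)n+4f_I(i)n$ and $\overline{\Sigma}_{A^\prime}(2p+2j+1)=(p-j)(2n-1)-6n-4f_J(j)n+1$. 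The ``band'' of $\overline{\Sigma}_{A^\prime}(2i)$ is centred near $4f_I(i)n+2in$, which depends on $i$ as well as on $f_I(i)$, and two indices collide whenever $i_1-i_2=2\bigl(f_I(i_2)-f_I(i_1)\bigr)$ (for example $p\geq 3$, $f_I(0)=1$, $f_I(2)=0$ produces two partial sums differing by $2$); so the bands are \emph{not} ``the same well-separated collection as for $A$, merely reassigned.'' Your closing step also fails there: in column $0$ the partial sums range from $-(4p+\gamma-2)n-1$ (at $\overline{\Sigma}_{A^\prime}(4p-2)$) up to $(6p+\gamma-4)n-p+1$ (at $\overline{\Sigma}_{A^\prime}(2i)$ with $i=f_I(i)=p-1$), a spread of $(10p+2\gamma-6)n-p+2$, which exceeds the modulus $2(4p+\gamma)n+1$ as soon as $p\geq 4$, so ``distinct integers are incongruent modulo $2(4p+\gamma)n+1$'' is unavailable. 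This is exactly why the paper runs the dedicated case analysis 1(i)--2(v) for column $0$ using (\ref{modl}), (\ref{n}) and (\ref{mod=}): congruences in which the two sides differ by exactly the modulus are excluded only by exact arithmetic that forces impossibilities such as $f_I(i)=p$ or $4(f_I(i)+f_J(j))=8p-4+\gamma$, and these margins are far too fine for interval or band estimates.

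The counting half has a second hole, one you flag yourself: the bound $p(p-1)$ on the fibres of your invariant $\Phi$, and the claim that the multisets of absolute values determine $f_I$ and $f_J$ up to cyclic relabelling, are asserted rather than proved, and they are the whole content of that step. The paper's route (spelled out in Section \ref{count2} for the merged arrays, and applying verbatim to $A^\prime$) is both shorter and stronger: by \eqref{Aprime} the diagonals $D_{2p}$ and $D_{2p+\alpha}$ do not depend on $(f_I,f_J)$, and each signed entry occupies a unique cell; hence any row/column rearrangement, negation and/or transposition carrying $A^\prime(f_I,f_J)$ to $A^\prime(g_I,g_J)$ would have to fix these two diagonals cell-by-cell, which rules out negation and transposition and forces the permutations to be the identity. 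Thus distinct pairs $(f_I,f_J)$ yield pairwise non-equivalent arrays, giving $p!\,(p-1)!$ of them --- more than the stated $(p-1)!\,(p-2)!$ --- with no invariant needed. I recommend replacing your $\Phi$-argument by this observation and adding the column-$0$ modular case analysis to your proof of P4.
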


We  prove this theorem by showing that for each pair of functions $(f_I,f_J)$, the array $A^\prime$ constructed above carries the Properties {\rm P1, P2, P3, P4}.
We will use the notation $\Sigma_A(x)$ and $\overline{\Sigma}_A(x)$ to denote the row partial sums and column partial sums in the array $A$ as given in \cite{BCDY} and $\Sigma_{A^\prime}(x)$ and $\overline{\Sigma}_{A^\prime}(x)$ to denote the  row partial sums and column partial sums respectively in the array $A^\prime$ as constructed here. Observe that $A$ is a special form of the array $A^\prime$ where both $f_I$ and $f_J$ are identity mappings.

Now since $A^\prime$ is obtained by permuting the entries in columns of $A$, $s(A^\prime)=\{\gamma n+1,\dots, (4p+\gamma)n\}$ and all columns sums are equal to $0$. Next the equations in Lemma \ref{Sums} can be used to verify that the row sums are 0.

\begin{lemma}\label{Sums}
The rows and columns $A^{\prime}$ satisfy the following equations for each $i\in I$ and $j\in J$. (Note, that since the context is clear here we have reduced notation and represented $d_{x}(r_a)$ and $d_x(c_a)$ by $d_x$.)

\begin{tabular}{lll}
For rows $a$:&For columns $a\neq 0$:&For columns $a=0$:\\
$d_{2i}+d_{2i+1}=1,$&$d_{2i}+d_{2i+1}=-1$&$d_{2i}+d_{2i+1}=2n-1,$\\\label{iSum}
$d_{2p+2j+1}+d_{2p+2j+2}=-1,$&$d_{2p+2j+1}+d_{2p+2j+2}=1,$&$d_{2p+2j+1}+d_{2p+2j+2}=-2n+1,$\\\label{jSum}
$d_{2p}+d_{2p+\alpha}=-1,$&$d_{2p}+d_{2p+\alpha}= 1,$&$d_{2p}+d_{2p+\alpha}=-2n+1.$\label{AlphaSum}\\
\end{tabular}
\end{lemma}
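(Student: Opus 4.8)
The plan is to establish the nine identities of Lemma~\ref{Sums} by direct substitution into the entry formulas \eqref{Aprime}, handling the three coupled pairs of diagonals $\{D_{2i},D_{2i+1}\}$, $\{D_{2p+1+2j},D_{2p+2+2j}\}$ and $\{D_{2p},D_{2p+\alpha}\}$ separately, and within each pair the three cases recorded in the statement: an arbitrary row $a$, an arbitrary column $a\neq 0$, and the column $a=0$. The structural fact that drives the whole computation is that inside every coupled pair the two diagonals carry the bijection term with opposite sign --- $+4f_I(i)n$ on $D_{2i}$ against $-4f_I(i)n$ on $D_{2i+1}$, and $-4f_J(j)n$ on $D_{2p+1+2j}$ against $+4f_J(j)n$ on $D_{2p+2+2j}$. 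Hence these terms cancel the moment the two entries are added, and every pairwise sum is independent of the choice of $f_I$ and $f_J$. In particular the claimed constants coincide with those of the base array $A$ (the case $f_I=f_J=\mathrm{id}$), so it is enough to compute each one once.

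Second, for a fixed line I would locate the two parameters $x$ indexing the relevant cells. Since the cell of $D_{2i}$ is $(2i-x,-x)$ and the cell of $D_{2i+1}$ is $(2i+1+x,x)$, matching the row index to $a$ modulo $n$ gives $x_1\equiv 2i-a$ and $x_2\equiv a-2i-1$, whence $x_1+x_2\equiv -1\pmod{n}$; matching the column index to $a$ instead gives $x_1\equiv -a$ and $x_2\equiv a$, whence $x_1+x_2\equiv 0\pmod{n}$. The same two congruences ($\equiv -1$ for rows, $\equiv 0$ for columns) come out for the pair $\{D_{2p+1+2j},D_{2p+2+2j}\}$, and also for $\{D_{2p},D_{2p+\alpha}\}$, where one first uses $\gcd(\alpha,n)=1$ to solve $\alpha x\equiv\cdots\pmod{n}$ uniquely in $[n]$ and then divides the resulting congruence $\alpha(x_1+x_2)\equiv -\alpha$ (rows) or $\equiv 0$ (columns) by $\alpha$. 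Because $x_1,x_2\in[n]$ we have $x_1+x_2\in\{0,\dots,2n-2\}$, so each congruence pins the integer value down: $x_1+x_2=n-1$ for every row, while for a column $x_1+x_2$ is either $0$ (which forces $x_1=x_2=0$, i.e.\ $a=0$) or $n$ (the case $a\neq 0$). This range argument is exactly where the $a=0$ versus $a\neq 0$ split of the lemma is born.

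Finally I would substitute. Adding the two entries in each coupled pair, after the $f$-terms cancel, leaves an expression that is affine in $x_1+x_2$: for $\{D_{2i},D_{2i+1}\}$ it is $2n-1-2(x_1+x_2)$, and for each of the other two pairs it is $-2n+1+2(x_1+x_2)$. Plugging in $x_1+x_2\in\{n-1,\,0,\,n\}$ according to the case then yields, for the first pair, the values $1,-1,2n-1$ across rows, columns $a\neq 0$, and the column $a=0$, and for each of the other two pairs the values $-1,1,-2n+1$, which are precisely the constants tabulated in the lemma. The computations themselves are routine; the only genuinely delicate points, and the parts I would write out carefully, are the unique solvability of $\alpha x\equiv\cdots$ for the $\{D_{2p},D_{2p+\alpha}\}$ pair (where coprimality of $\alpha$ and $n$ is essential) and the range bound that isolates $x_1+x_2$ and thereby separates the $a=0$ column from the rest.
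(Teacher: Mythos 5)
Your proposal is correct and follows essentially the same route as the paper: the paper's proof of the row identities is exactly your computation (the $\pm 4f_I(i)n$, $\mp 4f_J(j)n$ terms cancel within each coupled pair, and matching cell indices gives $x_1+x_2\equiv -1 \md{n}$, hence $x_1+x_2=n-1$), including the use of $\gcd(\alpha,n)=1$ for the pair $D_{2p},D_{2p+\alpha}$. The only difference is that the paper dispatches the column cases by citing \cite{BCDY} (the identities hold for $A$, and each column of $A^{\prime}$ is a within-column rearrangement of the corresponding column of $A$), whereas you verify them by the same direct substitution and the $x_1+x_2\in\{0,n\}$ range argument --- a slightly longer but fully self-contained treatment.
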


\begin{proof}
In \cite{BCDY} it was shown the above statements were  true for the array $A$ and the  result follows directly  by  definition for the columns of $A^{\prime}$.

For a given row $a\in [n]$ and all $i\in I$, there exists $x_1,x_2\in [n]$ such that $a=2i-x_1\md{n}$ and $a=2i+1+x_2\md{n}$. Thus  $x_1+x_2+1=0\md{n}$  and so $x_1+x_2=n-1.$ Consequently for all $ i\in I$,
\begin{eqnarray}
d_{2i}(r_a)+d_{2i+1}(r_a)&=&(\gamma+2)n+4f_I(i)n-2x_1-\gamma n-4f_I(i)n-1-2x_2\nonumber\\
&=&2n-1-2(n-1)=1.\label{RowiSum}
\end{eqnarray}
The remaining observations for the rows hold similarly.
\end{proof}

\begin{corollary}\label{cdy2}
 For any choice of $f_I$ and $f_J$ and for all rows and columns of $A^{\prime}$ we have:
   \begin{eqnarray*}
   &\Sigma_{A^\prime}(2i+1)= \Sigma_{A}(2i+1),  &\overline{\Sigma}_{A^\prime}(2i+1)= \overline{\Sigma}_{A}(2i+1), \\
   &\Sigma_{A^\prime}(2p+2j+2)=\Sigma_{A}(2p+2j+2), &\overline{\Sigma}_{A^\prime}(2p+2j+2)=\overline{\Sigma}_{A}(2p+2j+2),\\
   &\Sigma_{A^\prime}(2p)=\Sigma_{A}(2p), & \overline{\Sigma}_{A^\prime}(2p)=\overline{\Sigma}_{A}(2p),\\
   &\Sigma_{A^\prime}(2p+\alpha)=\Sigma_{A}(2p+\alpha), &\overline{\Sigma}_{A^\prime}(2p+\alpha)=\overline{\Sigma}_{A}(2p+\alpha).
   \end{eqnarray*}
\end{corollary}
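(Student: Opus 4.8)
The plan is to read the corollary off directly from Lemma \ref{Sums}, by observing that every partial-sum index appearing in the statement is the \emph{completion point} of a coupled pair of diagonals, precisely where the dependence on $f_I$ and $f_J$ has already cancelled.

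First I would fix the increasing order of the occupied diagonals. Since $2p-1\le\alpha$ forces $2p+\alpha\ge 4p-1>4p-2$, the occupied diagonals, listed by increasing index, are
\[
D_0,D_1,\dots,D_{2p-1},\ D_{2p},\ D_{2p+1},\dots,D_{4p-2},\ D_{2p+\alpha},
\]
so that each $\Sigma(x)$ (resp.\ $\overline\Sigma(x)$) sums the entries of row (resp.\ column) $a$ over all occupied diagonals of index at most $x$. The occupied diagonals $D_0,\dots,D_{2p-1}$ split into the $f_I$-coupled pairs $\{D_{2i},D_{2i+1}\}$, and $D_{2p+1},\dots,D_{4p-2}$ into the $f_J$-coupled pairs $\{D_{2p+2j+1},D_{2p+2j+2}\}$, while $D_{2p}$ and $D_{2p+\alpha}$ carry entries literally unchanged between $A$ and $A'$.

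Next I would treat the four families in turn. For $\Sigma(2i+1)$ the sum runs exactly over the complete coupled pairs $\{D_0,D_1\},\dots,\{D_{2i},D_{2i+1}\}$; by the row (resp.\ column) equations of Lemma \ref{Sums} each pair contributes a fixed amount ($1$ for rows, and $-1$ or $2n-1$ for columns) that does \emph{not} involve $f_I$, because the $4f_I(i)n$ terms on $D_{2i}$ and $D_{2i+1}$ cancel. Hence $\Sigma_{A'}(2i+1)=\sum_{i'=0}^{i}(\text{pair sum})=\Sigma_A(2i+1)$, and identically for $\overline\Sigma$. The index $2p$ adds to these complete $f_I$-pairs only the single entry on $D_{2p}$, which is unchanged; so $\Sigma(2p)$ is preserved. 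The index $2p+2j+2$ adds to the full block of $f_I$-pairs, the $D_{2p}$ entry, and the complete $f_J$-pairs up to $\{D_{2p+2j+1},D_{2p+2j+2}\}$, each again contributing an $f_J$-independent amount; so $\Sigma(2p+2j+2)$ is preserved. Finally $\Sigma(2p+\alpha)$ is the total row (resp.\ column) sum, equal to $0$ in both arrays since every row and column sums to zero; alternatively it equals $\Sigma(4p-2)$ (already handled) plus the unchanged $D_{2p+\alpha}$ entry.

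There is no substantial obstacle: the corollary is a bookkeeping consequence of the cancellation already isolated in Lemma \ref{Sums}. The only points requiring genuine care are confirming that each listed index lands exactly on a pair boundary, rather than in the interior of a pair where the partial sum does depend on $f_I$ or $f_J$, and handling the column case $a=0$ separately, since there the pair sums take the values $2n-1$ and $-2n+1$ rather than $\pm 1$; but these constants are likewise independent of $f_I$ and $f_J$, so the argument goes through verbatim.
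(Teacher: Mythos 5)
Your proposal is correct and is essentially the paper's own (implicit) argument: the paper derives the corollary directly from Lemma \ref{Sums}, exactly as you do, by noting that each index $2i+1$, $2p$, $2p+2j+2$, $2p+\alpha$ closes off complete coupled pairs whose sums are $f_I$- and $f_J$-independent, while the entries on $D_{2p}$ and $D_{2p+\alpha}$ are literally identical in $A$ and $A^{\prime}$. Your added care about the diagonal ordering ($2p+\alpha>4p-2$) and the separate column $a=0$ constants matches the paper's setup, so nothing further is needed.
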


We will also need the following lemma to bound certain partial sums.
 \begin{lemma}
The following bounds hold for partial sums on rows and non-zero columns.
\begin{eqnarray*}
-(4p+\gamma)n & < \Sigma_{A^{\prime}}(2p)=&d_{2p}(r_a)+p <   -(4p+\gamma-2)n+p-1, \\
& \Sigma_{A^{\prime}}(4p-2)= & d_{2p}(r_a)+1, \\
  -(4p+\gamma)n-p & \leq \overline{\Sigma}_{A^{\prime}}(2p)=&d_{2p}(c_a)-p \leq  -(4p+\gamma-2)n-p-2.
\end{eqnarray*}
\label{bbound}
\end{lemma}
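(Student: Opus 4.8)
The plan is to reduce each of the three stated quantities to the single term $d_{2p}(r_a)$ or $d_{2p}(c_a)$ by telescoping the coupled diagonals, and then to read off the range of that single term directly from the entry formula (\ref{Aprime}) used to build $A^{\prime}$. Throughout, the key structural facts are the pairwise identities of Lemma \ref{Sums}, whose $f_I,f_J$ dependence has already cancelled, so the bounds will hold uniformly over all choices of the bijections.

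First, consider the row partial sum $\Sigma_{A^{\prime}}(2p)=\sum_{i=0}^{2p} d_i(r_a)$, which runs over $D_0,D_1,\dots,D_{2p}$. These group into the $p$ coupled pairs $\{D_{2i},D_{2i+1}\}$, $i\in I$, together with the single diagonal $D_{2p}$; the diagonal $D_{2p+\alpha}$ has index $2p+\alpha\geq 4p-1>2p$ (since $\alpha\geq 2p-1$) and so does not contribute. By the row identity $d_{2i}(r_a)+d_{2i+1}(r_a)=1$ of Lemma \ref{Sums}, each pair contributes $1$, giving $\Sigma_{A^{\prime}}(2p)=p+d_{2p}(r_a)$, the claimed middle expression. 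It then remains to bound $d_{2p}(r_a)$: from the entry $-(4p+\gamma)n+2x$ placed on $D_{2p}$ in cell $(2p-\alpha x,-\alpha x)$, and the fact that $\gcd(\alpha,n)=1$ makes $x\mapsto 2p-\alpha x$ a bijection on $[n]$, the value $d_{2p}(r_a)$ ranges over $\{-(4p+\gamma)n+2x:x\in[n]\}$ and hence lies in the interval $[-(4p+\gamma)n,\,-(4p+\gamma-2)n-2]$. Adding $p>0$ yields the strict inequalities $-(4p+\gamma)n<\Sigma_{A^{\prime}}(2p)<-(4p+\gamma-2)n+p-1$.

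For $\Sigma_{A^{\prime}}(4p-2)$ I extend the same telescoping past $D_{2p}$. The sum over $D_0,\dots,D_{4p-2}$ consists of the $p$ pairs above (each contributing $1$), the single $D_{2p}$, and the $p-1$ pairs $\{D_{2p+2j+1},D_{2p+2j+2}\}$, $j\in J$, which by Lemma \ref{Sums} each contribute $-1$; again $D_{2p+\alpha}$ is excluded since $2p+\alpha\geq 4p-1>4p-2$. Therefore $\Sigma_{A^{\prime}}(4p-2)=p+d_{2p}(r_a)-(p-1)=d_{2p}(r_a)+1$, as required. For the nonzero-column partial sum $\overline{\Sigma}_{A^{\prime}}(2p)$ I run the identical argument with the column identities of Lemma \ref{Sums}: for $a\neq 0$ each coupled pair $\{D_{2i},D_{2i+1}\}$ sums to $-1$, so $\overline{\Sigma}_{A^{\prime}}(2p)=-p+d_{2p}(c_a)$. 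Since $x\mapsto -\alpha x$ is likewise a bijection on $[n]$, the value $d_{2p}(c_a)$ ranges over the same set $\{-(4p+\gamma)n+2x:x\in[n]\}$, and subtracting $p$ gives the (non-strict) bounds $-(4p+\gamma)n-p\leq\overline{\Sigma}_{A^{\prime}}(2p)\leq-(4p+\gamma-2)n-p-2$.

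I expect no serious obstacle; the argument is essentially bookkeeping. The only points requiring care are the indexing — verifying that $D_{2p+\alpha}$ lies strictly beyond index $4p-2$ so that it never enters $\Sigma_{A^{\prime}}(2p)$ or $\Sigma_{A^{\prime}}(4p-2)$ — and confirming via $\gcd(\alpha,n)=1$ that each row and each column meets $D_{2p}$ in exactly one cell, so that $d_{2p}(r_a)$ and $d_{2p}(c_a)$ are well defined and sweep out the full set $\{-(4p+\gamma)n+2x:x\in[n]\}$. Because the cancellation within each coupled pair removes all dependence on $f_I$ and $f_J$ (as recorded in Lemma \ref{Sums} and Corollary \ref{cdy2}), the bounds are independent of the chosen bijections.
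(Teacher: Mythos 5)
Your proof is correct and takes essentially the same route as the paper: the paper's (much terser) proof likewise invokes Lemma \ref{Sums} to get $\Sigma_{A^{\prime}}(2p)=d_{2p}(r_a)+p$, $\overline{\Sigma}_{A^{\prime}}(2p)=d_{2p}(c_a)-p$ and $\Sigma_{A^{\prime}}(4p-2)=\Sigma_{A^{\prime}}(2p)-(p-1)$, and then reads the bounds off the definition of $A^{\prime}$, exactly as you do. One negligible imprecision: for a non-zero column $a$ the entry with $x=0$ on $D_{2p}$ lies in column $0$, so $d_{2p}(c_a)$ actually ranges over $\{-(4p+\gamma)n+2x : 1\leq x\leq n-1\}$ rather than the full set you state; this only tightens, and never violates, the non-strict bounds claimed.
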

\begin{proof}
From Lemma \ref{Sums},  for each row $a$, $\Sigma_{A^{\prime}}(2p)=d_{2p}(r_a)+p$ and for each non-zero column $a$,
 $\overline{\Sigma}_{A^{\prime}}(2p)=d_{2p}(c_a)-p$. Also,  $\Sigma_{A^{\prime}}(4p-2)=\Sigma_{A^{\prime}}(2p) - (p-1)$.

 The result then follows from the definition of $A^{\prime}$.
\end{proof}

   By Theorem \ref{ccddyy}, $A$ has distinct row and column partial sums; hence by Corollary \ref{cdy2} it is only necessary to show that
   the row partial sums $\Sigma_{A^\prime}(2i)$ and $\Sigma_{A^\prime}(2p+2j+1)$ and column partial sums $\overline{\Sigma}_{A^\prime}(2i)$ and $\overline{\Sigma}_{A^\prime}(2p+2j+1)$ are distinct from each other and from the other partial sums.

  By Lemma \ref{Sums} and the definition of $A^\prime$, for all rows $a$
\begin{eqnarray*}
   \Sigma_{A^\prime}(2i)&=&d_{2i}(r_a)+i=(\gamma+2)n+4f_I(i)n-2x+i\nonumber\\
   && \mbox{ where }x=2i-a\md{n}\mbox{ and }0\leq x\leq n-1 \mbox{, and}\\
   \Sigma_{A^\prime}(2p+2j+1)&=&\Sigma_{A^\prime}(2p)+d_{2p+2j+1}(r_a)-j\\
   &=&\Sigma_{A^\prime}(2p)+(4p+\gamma-6)n-4f_J(j)n+1+2x-j\mbox{,} \\
   &&\mbox{ where }\ x=2p+2j+1-a\md{n}\mbox{ and }0\leq x\leq n-1\mbox{.}
\end{eqnarray*}
Note that
\begin{eqnarray}
(4p+\gamma)n>\Sigma_{A^\prime}(2i)\geq \gamma n+2, \label{Range2i}\\
0>-n>\Sigma_{A^\prime}(2p+2j+1)>\Sigma_{A^\prime}(2p)+(\gamma+1)n=d_{2p}(r_a)+p+(\gamma+1)n, \label{Range2j+1}
\end{eqnarray}
since $0\leq f_I(i)\leq p-1$ and $0\leq f_J(j)\leq p-2$ for $i\in I$ and $j\in J$.

Furthermore $\Sigma_{A^\prime}(2i_1)\equiv \Sigma_{A^\prime}(2i_2) \md{2(4p+\gamma)n+1}$ for some $i_1,i_2\in I$ implies $$4f_I(i_1)n-2(2i_1-a\md{n})+i_1=4f_I(i_2)n-2(2i_2-a\md{n})+i_2$$ by (\ref{Range2i}) and (\ref{mods}).
Then $4(f_I(i_1)-f_I(i_2))n\leq (i_2-i_1)+2(n-1)\leq 3n$ which implies $f_I(i_1)-f_I(i_2)=0$. Hence $i_1=i_2$.
Similarly $\Sigma_{A^\prime}(2p+2j_1+1)\equiv \Sigma_{A^\prime}(2p+2j_2+1)\md{2(4p+\gamma)n+1}$ for some $j_1,j_2\in J$ implies $$4f_I(j_1)n-2(2p+2j_1+1-a\md{n})+j_1=4f_I(j_2)n-2(2p+2j_2+1-a\md{n})+j_2$$ by (\ref{Range2j+1}) and (\ref{mods}). Similarly this implies $4(f_I(j_1)-f_I(j_2))n\leq (j_2-j_1)+2(n-1)\leq 3n$ and hence $j_1=j_2$ as before. Therefore by inequality (\ref{Rowparsum4p}) all row partial sums are distinct.

Next, let $a\neq 0$ be a column.
By Lemma \ref{Sums} and the definition of $A^\prime$ we have:
\begin{eqnarray*}
 \overline{\Sigma}_{A^\prime}(2i) & = & d_{2i}(c_a)-i=(\gamma+2)n+4f_I(i)n-2(n-a\md{n})-i,  \\
  \overline{\Sigma}_{A^\prime}(2p+2j+1)&=&\overline{\Sigma}_{A^\prime}(2p)+d_{2p+2j+1}(c_a)+j=d_{2p}(c_a)-p+d_{2p+2j+1}(c_a)+j \\
  &<&-(4p+\gamma)n+2n-2-p+(4p+\gamma-6)n-4f_J(j)n+1+2n-2+j\nonumber\\
  &=&-2n-3-p-4f_J(j)n+j < -n,\nonumber\\
 \overline{\Sigma}_{A^\prime}(2p+2j+1) &\geq&d_{2p}(c_a)-p+(4p+\gamma-6)n-4f_J(j)n+1+j\nonumber\\
  &>&\overline{\Sigma}_{A^\prime}(2p)+(\gamma+2)n.
\end{eqnarray*}
Thus,
\begin{eqnarray}
 (4p+\gamma)n &>& \overline{\Sigma}_{A^\prime}(2i) \geq\gamma n+2-i>0, \label{Col2i}\\
  \overline{\Sigma}_{A^\prime}(2p-1)  & > & \overline{\Sigma}_{A^\prime}(2p+2j+1) > \overline{\Sigma}_{A^\prime}(2p)+(\gamma+2)n> \overline{\Sigma}_{A^\prime}(2p)+p. \label{Col2j+1}
\end{eqnarray}

Furthermore $\overline{\Sigma}_{A^\prime}(2i_1)\equiv\overline{\Sigma}_{A^\prime}(2i_2) \md{2(4p+\gamma)n+1}$ implies $4f_I(i_1)n-i_1=4f_I(i_2)n-i_2$ by (\ref{mods}). Then $4(f_I(i_1)-4f_I(i_2))n=i_1-i_2$ so $i_1=i_2$.
Similarly $\overline{\Sigma}_{A^\prime}(2p+2j_1+1)=\overline{\Sigma}_{A^\prime}(2p+2j_2+1)\mod{(2(4p+\gamma)n+1)}$ implies $-4f_I(j_1)n+j_1=-4f_I(j_2)n+j_2$ so $j_1=j_2$ as before. Therefore by inequality (\ref{Colparsum4p}) all column partial sums are distinct.

  Now consider column $0$. Then:
  \begin{eqnarray}
  \overline{\Sigma}_{A^\prime}(2i)&=&(2n-1)i+(\gamma+2)n+4f_I(i)n\leq 2(4p+\gamma)n+1,\label{zeroi} \\
  \overline{\Sigma}_{A^\prime}(2p+2j+1)&=& (p-j)(2n-1)-(4p+\gamma)n +(4p+\gamma-6)n-4f_J(j)n+1\label{zeroj}\\
  &=&(p-j)(2n-1)-6n-4f_J(j)n+1>-(4p+\gamma)n.\nonumber
 \end{eqnarray}

 By Lemma  \ref{Sums} and Corollary \ref{cdy2}, related partial sums for column $0$ can be calculated as:
\begin{eqnarray}
 \overline{\Sigma}_{A^\prime}(2i+1)&=&2(i+1)n-(i+1)>0,\nonumber \\
 \overline{\Sigma}_{A^\prime}(2p)&=&-(2p+\gamma)n-p<0,\nonumber \\
\overline{\Sigma}_{A^\prime}(2p+2j+2)&=&-(2p+2j+\gamma+2)n-(p-j-1)<0, \label{04p-2} \\
\overline{\Sigma}_{A^\prime}(2p+\alpha)&=&0. \nonumber
  \end{eqnarray}

 Observe that for column $0$ and for all non-empty diagonals $x$, \begin{eqnarray}|\overline{\Sigma}_{A^\prime}(x)|\leq 2(4p+\gamma)n+1.\label{star4}\end{eqnarray}
  We will show that for each $i\in I$ and $j\in J$, $\overline{\Sigma}_{A^\prime}(2i)$ and $\overline{\Sigma}_{A^\prime}(2p+2j+1)$ are distinct $\md{2(4p+\gamma)n+1}$ from each other and each of the other partial sums in column $0$. In what follows we will make extensive use of
(\ref{star4}) together with (\ref{n}).
\begin{itemize}
\item[1(i)] Suppose that $\overline{\Sigma}(2i_1)=\overline{\Sigma}(2i_2)\md{2(4p+\gamma)n+1}$ for some $i_1,i_2\in I$. Then $2(i_1-i_2)n-(i_1-i_2)=4(f_I(i_2)-f(i_1))n$ by (\ref{modl}) but then $i_1-i_2=0$.
\item[1(ii)] Suppose that $\overline{\Sigma}(2i_1)=\overline{\Sigma}(2i_2+1)\md{2(4p+\gamma)n+1}$ for some $i_1,i_2\in I$. Then  (\ref{modl}) implies
    $$(2n-1)i_1+(\gamma+2)n+4f_I(i_1)n= (2i_2+2)n-(i_2+1).$$ Hence $i_1=i_2+1$ and $2i_1+\gamma+2+4f_I(i_1)=2i_2+2$. But then $4f_I(i_1)=-\gamma-2$. This is a contradiction since $\gamma> 0$.

\item[1(iii)] Suppose that $\overline{\Sigma}(2i)=\overline{\Sigma}(2p)\md{2(4p+\gamma )n+1}$, for some $i\in I$. Then by (\ref{mod=})
\begin{eqnarray*}
(2n-1)i+(\gamma+2)n+4f_I(i)n&=&2(4p+\gamma)n+1-(2p+\gamma)n-p,\\
(\gamma+2+4f_I(i)+2i)n-i&=&(6p+\gamma)n+1-p.
\end{eqnarray*} This implies $i=p-1$ and $\gamma+2+4f_I(i)+2i=6p+\gamma$ leading to the contradiction $f_I(i)=p$.
\item[1(iv)] Suppose that $\overline{\Sigma}(2i)=\overline{\Sigma}(2p+2j+1)\md{2(4p+\gamma )n+1}$, for some $i\in I$ and $j\in J$ then by (\ref{mod=})
\begin{eqnarray*}
(2n-1)i+(\gamma+2)n+4f_I(i)n &=& (p-j)(2n-1)-6n-4f_J(j)n+1\quad  \mbox{ or} \\
(2n-1)i+(\gamma+2)n+4f_I(i)n &=& (p-j)(2n-1)-6n-4f_J(j)n+2(4p+\gamma)n+2.
 \end{eqnarray*}
The former case implies $i=p-j-1$ and so $2(p-j-1)+\gamma+2+4f_I(i)=2p-2j-6-4f_J(j)$, or equivalently $4(f_I(i)+f_J(j))=-6-\gamma<0$ which is a contradiction. For the second case we have $(2i+2j+8-\gamma+4f_I(i)+4f_J(j))n-i=10pn-(p-j)+2$, which implies $p-2=i+j$ and so $4(f_I(i)+f_J(j))=10p-2(i+j)-8+\gamma=10p-2p-4+\gamma=8p-4+\gamma>8p-4$. This is a contradiction since $f_I(i)+f_J(j)\leq 2p-3$ and so $4(f_I(i)+f_J(j))\leq 8p-12$.

\item[1(v)]  Suppose that $\overline{\Sigma}(2i)=\overline{\Sigma}(2p+2j+2)\md{2(4p+\gamma)n+1}$ for some $i\in I$ and $j\in J$. Then by (\ref{mod=})
\begin{eqnarray*}
(2n-1)i+(\gamma+2)n+4f_I(i)n&=& 2(4p+\gamma)n+1 -(2p+2j+\gamma+2)n-(p-j-1)\\
&=&(6p-2j+\gamma-2)n-(p-j-2).
\end{eqnarray*}
 Thus  $i=p-j-2$ and $2i+\gamma+2+4f_I(i)=6p-2j+\gamma-2$ or equivalently $2(p-j-2)+4f_I(i)+2=6p-2j-2$ and so  $f_I(i)=p$, a contradiction.

\item[2(i)]
 Assume $\overline{\Sigma}(2p+2j+1)\equiv \overline{\Sigma}(2i+1)\md{2(4p+\gamma )n+1}$  for some $i\in I$ and $j\in J$. Then by (\ref{mods}) we have
 $$(p-j)(2n-1)-6n-4f_J(j)n+1=(2i+2)n-(i+1).$$ Then $p=j+i+2$ and $2p-2j-6-4f_J(j)-2i-2=0$ which implies $2(i+j+2)-2i-2j-8-4f_J(j)=0$ and so $f_J(j)=-1$, a contradiction.

\item[2(ii)]  Assume $\overline{\Sigma}(2p+2j+1)\equiv \overline{\Sigma}(2p) \md{2(4p+\gamma )n+1}$ for some $j\in J$. Then by (\ref{mods})
$$(p-j)(2n-1)-6n-4f_J(j)n+1= -(2p+\gamma)n-p.$$ This implies $-p=-p+j+1$ which is a contradiction as $j\neq -1$.

\item[2(iii)] Assume $\overline{\Sigma}(2p+2j_1+1)\equiv \overline{\Sigma}(2p+2j_2+2)\md{2(4p+\gamma )n+1}$ for some $j_1, j_2\in J$. Then by (\ref{mods})
    $$(p-j_1)(2n-1)-6n-4f_J(j_1)n+1=-(2p+2j_2+\gamma+2)n-(p-j_2-1).$$ Then we have $-(p-j_1)+1=-p+j_2+1$ and $4p-2j_1-6-4f_J(j_1)+2j_2+\gamma+2=0$ which implies $j_1=j_2$ and $4f_J(j_1)=4p-4+\gamma\geq 4(p-1)$. This is a contradiction since $f_J(j_1)\leq p-2$ .

\item[2(iv)] Assume $\overline{\Sigma}(2p+2j+1)\equiv 0\md{2(4p+\gamma)n+1}$ for some $j\in J$. Then by (\ref{mods}) we have $-p+j+1=0$ which implies $j=p-1>p-2$, a contradiction.

\item[2(v)] Assume $\overline{\Sigma}(2p+2j_1+1)\equiv \overline{\Sigma}(2p+2j_2+1)\md{2(4p+\gamma)n+1}$. Then by (\ref{mods}) we have $-p+j_1+1=-p+j_2+1$ which implies $j_1=j_2$.
\end{itemize}

\subsection{Non-equivalent globally simple Heffter arrays $H(n;4p+3)$ }\label{count2}

In this section we prove Theorem \ref{main4}.
First we need the following theorems from \cite{BCDY}.

\begin{theorem} {\rm \cite{BCDY}} For each $n\equiv 1\md{4}$ and $0\leq \beta\leq n-5$, a Heffter array $H(n;3)$, $L$, exists that satisfies the following conditions:
\begin{itemize}
\item The non-empty cells are exactly on the diagonals $D_{\beta}$, $D_{\beta+2}$ and $D_{\beta+4}$,
\item $s(D_{\beta+2})=\{1,\dots,n\}$,
\item $s(D_\beta\cup D_{\beta+4})=\{n+1,\dots,3n\}$,
\item entries on $D_{\beta}$ are all positive,
\item entries on $D_{\beta+4}$ are all negative,
\item the array defined by $M=[M(i,j)]$ where $M(i,j)=L(i+1,j+1)$, $i,j\in [n]$ retains the above properties.
\end{itemize}
\label{ladder}
\end{theorem}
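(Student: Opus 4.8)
The plan is to exhibit $L$ explicitly by prescribing its entries along the three diagonals and then verifying each listed property in turn. Index the $n$ cells of each diagonal by the column they occupy, writing $b_t$, $m_t$, $c_t$ for the entries of $D_\beta$, $D_{\beta+2}$, $D_{\beta+4}$ in column $t\in[n]$ (all indices read $\md{n}$). Since column $t$ meets the three diagonals exactly in these cells, the column-sum condition is simply $b_t+m_t+c_t=0$ for every $t$, while row $a$ meets $D_\beta$, $D_{\beta+2}$, $D_{\beta+4}$ in columns $a-\beta$, $a-\beta-2$, $a-\beta-4$, so (after the substitution $i=a-\beta$) the row-sum condition reads $b_i+m_{i-2}+c_{i-4}=0$. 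Eliminating $m$ through the column relation $m_t=-(b_t+c_t)$ turns this into the single recurrence $b_i-b_{i-2}=c_{i-2}-c_{i-4}$ coupling the two outer diagonals; thus it suffices to choose the positive sequence $(b_t)$ on $D_\beta$ and the negative sequence $(c_t)$ on $D_{\beta+4}$ so that this recurrence holds, and then \emph{define} the middle entries by $m_t:=-(b_t+c_t)$.

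Next I would write down explicit arithmetic formulas for $b_t$ and $|c_t|$, sweeping $t$ across $[n]$ and letting these values run through two interleaved arithmetic progressions of common difference $2$ that together partition $\{n+1,\dots,3n\}$. The point of the common difference $2$ is that it matches the ``$+2$'' shift built into the offsets $\{\beta,\beta+2,\beta+4\}$, so that both sides of $b_i-b_{i-2}=c_{i-2}-c_{i-4}$ become equal constants and the recurrence holds identically away from the wraparound. The congruence $n\equiv1\md4$ — equivalently $3n\equiv3\md4$, which is exactly the feasibility condition for an integer $H(n;3)$ supplied by Theorem \ref{SquHefSpec} — is what allows the signs of the middle entries $m_t=|c_t|-b_t$ to be distributed so that $\{\,|m_t|\,\}=\{1,\dots,n\}$ while the two outer progressions between them exhaust $\{n+1,\dots,3n\}$.

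With the formulas in hand the verification splits into three routine but bookkeeping-heavy checks. The column sums vanish by construction. For the row sums I would substitute the formulas into $b_i+m_{i-2}+c_{i-4}$; because the diagonal offsets differ by exactly $2$ this reduces to the constant-difference identity above, and the only care needed is a case split according to whether the indices $i-2,\,i-4$ wrap around modulo $n$, where a correction term that is a multiple of $n$ appears and must cancel. For the support conditions I would check that the middle progression realises $\{1,\dots,n\}$ up to sign bijectively, that $\{b_t\}\cup\{|c_t|\}=\{n+1,\dots,3n\}$, and that $b_t>0$ throughout $D_\beta$ and $c_t<0$ throughout $D_{\beta+4}$ by the chosen signs.

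Finally, the shift-invariance bullet is immediate and needs no new construction: the map $M(i,j)=L(i+1,j+1)$ sends the cell $(i,j)$, which lies on $D_{i-j}$, to the cell $(i+1,j+1)$, which lies on the \emph{same} diagonal $D_{i-j}$, so $M$ occupies exactly $D_\beta,D_{\beta+2},D_{\beta+4}$; moreover the multiset of entries on each diagonal is unchanged, each row (resp.\ column) of $M$ is a row (resp.\ column) of $L$, and hence all support, sign, and sum conditions transfer verbatim. I expect the genuine obstacle to lie in the second and third paragraphs: reconciling the rigid step-$2$ structure forced by the offsets with the requirement that the signed middle entries realise precisely $\{1,\dots,n\}$ while the outer diagonals realise precisely $\{n+1,\dots,3n\}$. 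This balancing is where $n\equiv1\md4$ must be invoked and where the wraparound case analysis for the row sums is most delicate.
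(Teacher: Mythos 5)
Your reduction is set up correctly: row $a$ meets the three diagonals in columns $a-\beta$, $a-\beta-2$, $a-\beta-4$, the column condition gives $m_t=-(b_t+c_t)$, and eliminating $m$ yields $b_i-b_{i-2}=c_{i-2}-c_{i-4}$. Your final paragraph is also right, and in fact shows the last bullet is automatic: a simultaneous cyclic shift of rows and columns preserves each diagonal, its support, its signs, and all row/column sums. Note, however, that the paper contains no proof of this theorem to compare against --- it is quoted verbatim from \cite{BCDY} --- so your argument has to stand on its own, and it does not: everything after your first paragraph is a plan rather than a proof, and the part you defer is the entire content of the theorem.

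To see concretely what is missing, push your own recurrence further. Since $n$ is odd, stepping by $2$ traverses $\mathbb{Z}_n$ in a single cycle, so $b_i-b_{i-2}=c_{i-2}-c_{i-4}$ for all $i$ is equivalent to $b_i-c_{i-2}=K$ for a global constant $K$; thus $c_t=b_{t+2}-K$ and $m_t=K-b_t-b_{t+2}$. Comparing supports, $\sum_t b_t+\sum_t(K-b_{t+2})=nK=\sum_{x=n+1}^{3n}x=n(4n+1)$, so $K=4n+1$ is forced. Hence $D_{\beta+4}$ is completely determined by $D_\beta$, and what remains is exactly this combinatorial problem: choose one element from each complementary pair $\{x,\,4n+1-x\}$, $x\in\{n+1,\dots,3n\}$, so that the chosen set $B$ satisfies $B\sqcup(4n+1-B)=\{n+1,\dots,3n\}$, together with a cyclic arrangement of $B$ in which the $n$ quantities $|4n+1-b_t-b_{t+2}|$ are precisely $\{1,\dots,n\}$ (with signs consistent with the Heffter property). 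This is a Heffter/Peltesohn-type difference construction, and producing it explicitly --- with the case analysis in which $n\equiv 1\md{4}$ is genuinely used --- is what \cite{BCDY} does and what your sketch omits. Moreover, your proposed mechanism cannot work as stated: if $b_i-b_{i-2}$ were a nonzero constant ``away from wraparound,'' summing the telescoping differences around the full $n$-cycle forces that constant to be $0$, so no arithmetic-progression choice closes up cyclically without carefully engineered irregularities, and it is exactly the placement of those irregularities compatibly with $\{|m_t|\}=\{1,\dots,n\}$ that you acknowledge leaving open.
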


\begin{theorem}\label{4p+3-2} {\rm \cite{BCDY}}
Let $n\equiv 1\md{4}$, $p>0$ and $n> 4p+3$. Let $\alpha$ be an integer such that $2p+2\leq \alpha\leq n-2-2p$ and $\mbox{\rm gcd}(n,\alpha)=1$.
Let $\beta=2p+\alpha-3$ and let $L$ be a Heffter array $H(n;3)$ based on $\beta$ satisfying the properties of Theorem
$\ref{ladder}$ where $\{L(\beta,0),-L(\beta+4,0)\}\cap\{2n-1,2n-(2p+1)/3\}=\emptyset$. Then
the union of arrays $L$ and $A$ (defined in  \eqref{A}, with $\gamma=3$) is a globally simple Heffter array $H(n;4p+3)$ where entries are on the set of diagonals $D_i$ where $i$ is in
${\mathbb D}=\{0,1,\dots ,4p-2,2p+\alpha\}\cup \{2p+\alpha-3,2p+\alpha-1,2p+\alpha+1\}$.
\end{theorem}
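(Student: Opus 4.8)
The plan is to set $B=L\cup A$ and verify in turn the three axioms of an integer Heffter array $H(n;4p+3)$ together with global simplicity. First I would confirm that the filled cells of $L$ and $A$ are disjoint: by Theorem~\ref{ccddyy} the array $A$ (with $\gamma=3$) occupies $D_0,\dots,D_{4p-2},D_{2p+\alpha}$, whereas $L$ occupies $D_\beta,D_{\beta+2},D_{\beta+4}=D_{2p+\alpha-3},D_{2p+\alpha-1},D_{2p+\alpha+1}$. Since $\alpha\ge 2p+2$ each of these three indices is at least $4p-1$, hence larger than $4p-2$ and different from $2p+\alpha$; since $\alpha\le n-2-2p$ we have $2p+\alpha+1\le n-1$, so none of them wraps modulo $n$. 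Thus the diagonals are pairwise distinct and every row and column of $B$ has exactly $4p+3$ filled cells. The support is $s(L)\cup s(A)=\{1,\dots,3n\}\cup\{3n+1,\dots,(4p+3)n\}=\{1,\dots,(4p+3)n\}$, and each row and column sums to $0$ because $L$ and $A$ separately do.

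For global simplicity, under the natural ordering the diagonals of $B$ are visited in the order $D_0,\dots,D_{4p-2},D_{2p+\alpha-3},D_{2p+\alpha-1},D_{2p+\alpha},D_{2p+\alpha+1}$. Hence the partial sums of $B$ through $D_{4p-2}$ are literally $\Sigma_A(0),\dots,\Sigma_A(4p-2)$, and inserting the three entries of $L$---call them $\ell_0,\ell_2,\ell_4$ for the current row or column on $D_\beta,D_{\beta+2},D_{\beta+4}$---produces only the new sums $Q_1=\Sigma_A(4p-2)+\ell_0$, $Q_2=Q_1+\ell_2$ and $Q_3=\ell_0+\ell_2$, where the identity $Q_3=\Sigma_A(4p-2)+\ell_0+\ell_2+d_{2p+\alpha}$ uses $\Sigma_A(4p-2)+d_{2p+\alpha}=0$; the closing total $\ell_0+\ell_2+\ell_4=0$ is the trivial sum. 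Because Theorem~\ref{ccddyy} guarantees that the partial sums of $A$ are pairwise distinct modulo $M=2(4p+3)n+1$---exactly the modulus of $H(n;4p+3)$---and the only one of them equal to $0$ is $\Sigma_A(2p+\alpha)$, which no longer occurs, the sums $\Sigma_A(0),\dots,\Sigma_A(4p-2)$ together with the closing $0$ are already mutually distinct; it remains only to show, for each row and each column, that $Q_1,Q_2,Q_3$ are distinct from one another and from all of these.

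The generic part is interval arithmetic. Theorem~\ref{ladder} gives $\ell_0\in\{n+1,\dots,3n\}$, $\ell_4\in\{-3n,\dots,-(n+1)\}$ and $|\ell_2|\le n$; together with Lemmas~\ref{partsum} and~\ref{bbound} (for $\gamma=3$ the small positive row sums are exactly $1,\dots,p$, the large positive sums lie in $(3n,(4p+1)n)$, and every second-block sum is negative and at least $-(4p+3)n$) this places $Q_1,Q_2$ in the large-negative band and $Q_3\in[1,4n]$. For every row and every nonzero column all partial sums lie in an interval of length less than $M$, so a congruence modulo $M$ forces an equality of integers; the only tight comparisons that survive the crude bounds---$Q_1$ or $Q_2$ against a second-block $\Sigma_A$, and $Q_3$ against a small positive sum $1,\dots,p$ or a large positive sum lying in $(3n,4n]$---are then dispatched by substituting the explicit entries of \eqref{A} and reducing through \eqref{mods}--\eqref{mod=}, exactly as in the itemised verification already carried out for $A'$.

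The hard part, and the only place the extra hypothesis is used, is column $0$. There the column-$0$ partial sums of $A$ span a range wider than $M$ (the positive sums $\overline{\Sigma}_A(2i)$ grow to order $6pn$ while the second-block sums reach order $-4pn$, cf.\ the column-$0$ computations \eqref{zeroi}--\eqref{04p-2}), so genuine wraparound modulo $M$ occurs and every comparison must be made as an honest congruence rather than an integer inequality. This is precisely what forces the condition $\{L(\beta,0),-L(\beta+4,0)\}\cap\{2n-1,2n-(2p+1)/3\}=\emptyset$: the two forbidden values are the column-$0$ partial sums of $A$ (for instance $\overline{\Sigma}_A(1)=2n-1$) that a merged sum would reproduce, and because the reverse ordering is equally admissible for simplicity the positive entry $\ell_0=L(\beta,0)$ and the reflected entry $-\ell_4=-L(\beta+4,0)$ play symmetric roles, so both must avoid both critical values. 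I would therefore treat rows, nonzero columns, and column $0$ in three passes---the first two by the interval argument above and the third by a short congruence case-check invoking the avoidance hypothesis. The main obstacle is exactly this bookkeeping in column $0$: confirming that the two stated forbidden values account for every boundary coincidence, since everything else reduces to the already-established simplicity of $A$ and elementary range estimates.
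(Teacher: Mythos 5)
Your proposal is correct and follows essentially the same route as the paper's own treatment of this result (the proof of Theorem \ref{Bprime}, which is exactly this theorem with $A$ replaced by $A^{\prime}$, together with the proof in \cite{BCDY} that the paper cites): keep the partial sums of $A$, already distinct modulo $2(4p+3)n+1$ by Theorem \ref{ccddyy}; compute the three new sums created by inserting the diagonals of $L$; settle rows and nonzero columns by integer interval estimates; and isolate column $0$, where genuine wraparound occurs, as the place where the hypothesis on $\{L(\beta,0),-L(\beta+4,0)\}$ is invoked in a congruence case-check. One correction to a side remark: both entries must avoid both critical values not because of any reverse-ordering symmetry, but because in column $0$ one has $\overline{\Sigma}_{B}(2p+\alpha-3)=-(4p+1)n-1+L(\beta,0)$, $\overline{\Sigma}_{B}(2p+\alpha-1)=-(4p+1)n-1-L(\beta+4,0)$ and $\overline{\Sigma}_{B}(2p+\alpha)=-L(\beta+4,0)$ (using the zero column sums of $L$), so both entries enter the merged sums in identical positions and meet the same obstructions, which force exactly the two stated values: $2n-1$ (coincidence with $\overline{\Sigma}_{A}(4p-4)$, or with $\overline{\Sigma}_{A}(1)$ for the last sum) and $2n-(2p+1)/3$ (coincidence, after reduction modulo $2(4p+3)n+1$, with $\overline{\Sigma}_{A}(2i)$ at $i=(2p+1)/3$, or with $\overline{\Sigma}_{A}(2p+2j+1)$ at $j=(p-4)/3$, whenever these indices are integers).
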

 As $A^{\prime}$ satisfies properties P1, P2, P3 and P4, and is indeed on the same set of cells as $A$, a similar theorem can be proven if we replace $A$ with $A^{\prime}$ under certain conditions on $f_I$ and $f_J$ in $A^\prime$:
$f_I(0)= 0$, $f_I(i)\neq (2p-i+1)/2$ for $i\in I$ and $f_J(j)\neq (p-j-4)/4$ for $j\in J$. Furthermore assume $\{L(\beta,0),-L(\beta+4,0)\}\cap\{2n-1\}=\emptyset$.

Let $B^{\prime}$ be the  merged array constructed as below.
\begin{eqnarray*}
B^\prime(i,j)=
\left\{\begin{array}{ll}
 A^\prime(i,j)\mbox{ if }i-j\not\in\{2p+\alpha-3,2p+\alpha-1,2p+\alpha+1\},\\
 L(i,j)\mbox{ if }i-j\in\{2p+\alpha-3,2p+\alpha-1,2p+\alpha+1\}.
 \end{array}\right.
\end{eqnarray*}
Hence we are positioning $D_{2p+\alpha-3}$, $D_{2p+\alpha-1}$ and $D_{2p+\alpha+1}$ of $L$ to match the diagonals $D_{2p+\alpha-3}$, $D_{2p+\alpha-1}$ and $D_{2p+\alpha+1}$ of $B^\prime$.

\begin{theorem}\label{Bprime} Let $n\equiv 1\md{4}$, $p>0$, $n> 4p+3$ and $\alpha$ be an integer such that $(n,\alpha)=1$ and $2p+2\leq \alpha\leq n-2-2p$. Now assume  $f_I(0)= 0$, $f_I(i)\neq (2p-i+1)/2$ for $i\in I$ and $f_J(j)\neq (p-j-4)/4$ for $j\in J$ and $\{L(\beta,0),-L(\beta+4,0)\}\cap\{2n-1\}=\emptyset$. Then the array  $B^\prime$ constructed as above by merging arrays $A^\prime$ and $L$ is a globally simple Heftter array $H(n;4p+3)$.
\end{theorem}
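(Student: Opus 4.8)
The plan is to verify the defining properties of an integer Heffter array for $B'$ and then reduce global simplicity to a short, explicit list of congruences that the hypotheses on $f_I$, $f_J$ and $L$ are designed to exclude. Fixing $\gamma=3$, the array $A'$ of \eqref{Aprime} occupies the $4p$ diagonals $D_0,\dots,D_{4p-2},D_{2p+\alpha}$, has support $\{3n+1,\dots,(4p+3)n\}$, and has zero row and column sums by Properties P1--P3 (established for $A'$ in the previous subsection, using Lemma \ref{Sums} for the rows); the array $L$ occupies $D_{2p+\alpha-3},D_{2p+\alpha-1},D_{2p+\alpha+1}$, has support $\{1,\dots,3n\}$, and has zero sums by Theorem \ref{ladder}. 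Since $\alpha\ge 2p+2$ gives $2p+\alpha-3\ge 4p-1>4p-2$, the three $L$-diagonals are disjoint from those of $A'$, so $B'$ has exactly $4p+3$ filled cells in each row and column, all line sums $0$, and support $\{1,\dots,(4p+3)n\}$ with each value attained once: this is an integer $H(n;4p+3)$. Because $k=4p+3$ yields the modulus $2nk+1=2(4p+3)n+1$, which is exactly the modulus for which $A'$ was shown simple, it remains only to show the row and column partial sums of $B'$ are distinct modulo $2(4p+3)n+1$.

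Ordering diagonals by index, the occupied diagonals of index at most $4p-2$ are precisely those of $A'$, so each partial sum of $B'$ there equals the corresponding $\Sigma_{A'}$ or $\overline{\Sigma}_{A'}$; the four remaining ``tail'' partial sums lie on $D_{2p+\alpha-3},D_{2p+\alpha-1},D_{2p+\alpha},D_{2p+\alpha+1}$ and are built from $\Sigma_{A'}(4p-2)$ by successively adding the two $L$-entries, the (unchanged) $D_{2p+\alpha}$-entry, and the final $L$-entry. By Corollary \ref{cdy2} (with $j=p-2$) we have $\Sigma_{A'}(4p-2)=\Sigma_{A}(4p-2)$ and $\overline{\Sigma}_{A'}(4p-2)=\overline{\Sigma}_{A}(4p-2)$, and the $L$- and $D_{2p+\alpha}$-entries are identical to those of the array $B$ of Theorem \ref{4p+3-2}; hence every tail partial sum of $B'$ equals the corresponding tail partial sum of $B$. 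Together with Corollary \ref{cdy2} at the unaffected indices $2i+1$, $2p+2j+2$, $2p$, $2p+\alpha$, this shows that the only partial sums in which $B'$ differs from $B$ are the \emph{affected} ones $\Sigma_{A'}(2i)$, $\Sigma_{A'}(2p+2j+1)$ and their column analogues $\overline{\Sigma}_{A'}(2i)$, $\overline{\Sigma}_{A'}(2p+2j+1)$, which carry all the dependence on $f_I,f_J$.

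Consequently, any two partial sums of $B'$ that are both inherited unchanged from $B$ are distinct exactly when the corresponding sums of $B$ are; and one checks that in \cite{BCDY} these comparisons invoke only the exclusion $\{L(\beta,0),-L(\beta+4,0)\}\cap\{2n-1\}=\emptyset$ (the extra value $2n-(2p+1)/3$ excluded there is forced only in the affected-versus-tail comparisons), so they cause no collision in $B'$. Any collision between two sums both drawn from $A'$ is excluded by the simplicity of $A'$. Thus the problem reduces to the affected-versus-tail comparisons: each of $\Sigma_{A'}(2i)$, $\Sigma_{A'}(2p+2j+1)$ (and their column analogues, and separately the exceptional column $0$) must be shown distinct modulo $2(4p+3)n+1$ from each of the four tail sums. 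Using the ranges (\ref{Range2i}), (\ref{Range2j+1}), (\ref{Col2i}), (\ref{Col2j+1}) for the affected sums and Lemma \ref{bbound} to bound the tail sums, each putative congruence collapses by (\ref{mods})--(\ref{mod=}) to a single integer equation, and a short computation shows that a collision could occur only if $f_I(i)=(2p-i+1)/2$, or $f_J(j)=(p-j-4)/4$, or $f_I(0)\ne 0$, or $2n-1\in\{L(\beta,0),-L(\beta+4,0)\}$ (the last two only in column $0$). The hypotheses $f_I(0)=0$, $f_I(i)\ne(2p-i+1)/2$, $f_J(j)\ne(p-j-4)/4$, and $\{L(\beta,0),-L(\beta+4,0)\}\cap\{2n-1\}=\emptyset$ negate every such possibility, so $B'$ is globally simple.

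The main obstacle is the bookkeeping in this last step. The tail sums mix the large $A'$-entries (of order $(4p+3)n$) with the comparatively small $L$-entries (of order $3n$), and bounding them sharply enough that (\ref{n}) and (\ref{mod=}) force a clean integer equation — rather than an ambiguous one with two solutions modulo $n$ — requires the precise estimates of Lemma \ref{bbound} together with a correct account of how the $L$-entry signs (positive on $D_\beta$, negative on $D_{\beta+4}$) shift each bound. One must then identify, case by case across the several row and column comparisons, exactly which value of $f_I(i)$ or $f_J(j)$ is forbidden, check that it is integral and in range, and confirm that no obstruction beyond the four listed ones arises. As in the analysis of $A'$, column $0$ must be treated separately, because the wrap-around there replaces the paired sums $\pm1$ by $2n-1$ (Lemma \ref{Sums}); this is precisely where the value $2n-1$ enters, and it is the reason the hypothesis of Theorem \ref{4p+3-2} can be weakened from excluding $\{2n-1,2n-(2p+1)/3\}$ to excluding only $\{2n-1\}$ once $f_I$ and $f_J$ are constrained.
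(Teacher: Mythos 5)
Your proposal is correct and follows essentially the same route as the paper: verify the Heffter-array properties of the merged array, use Corollary \ref{cdy2} to see that the tail partial sums on $D_{2p+\alpha-3},D_{2p+\alpha-1},D_{2p+\alpha},D_{2p+\alpha+1}$ and all unaffected partial sums coincide with those of $B$, cite the proof of Theorem 3.10 of \cite{BCDY} (which needs only the $2n-1$ exclusion) for the inherited comparisons, and reduce everything to affected-versus-tail comparisons that the range bounds and the hypotheses on $f_I,f_J$ eliminate. The only imprecision is in where the hypotheses act: in the paper, $f_I(0)=0$ is used for the \emph{non-zero} columns (to force $\overline{\Sigma}_{B^\prime}(2i)\geq 3n+1$, above the tail range), the rows and non-zero columns are then settled purely by interval separation with no congruence analysis, and the forbidden values of $f_I$ and $f_J$ arise only in the column-$0$ case analysis, rather than ``only the last two'' conditions being special to column $0$ as you state.
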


\begin{proof}
Now,
$s(A^\prime)=\{3n+1,\dots, (4p+3)n\}$ and $s(L)=\{1,\dots,3n\}$ so it is obvious that $s(B)=\{1,\dots,(4p+3)n\}$. Also since all row and column sums of both $A^\prime$ and $L$ are $0$, all rows and columns sum to $0$ in $B^\prime$.

 For rows and columns of $B^{\prime}$, $\Sigma_{B^\prime}(x)= \Sigma_{A^\prime}(x)$ and $\overline{\Sigma}_{B^\prime}(x)= \overline{\Sigma}_{A^\prime}(x)$ for $0\leq x\leq 4p-2$. Hence by the previous section the row partial sums and column partial sums are distinct for $0\leq x\leq 4p-2$.

 Also $\Sigma_{B^\prime}(x)= \Sigma_{B}(x)$ and $\overline{\Sigma}_{B^\prime}(x)= \overline{\Sigma}_{B}(x)$ for all $x\in\{2i+1,2p+2j+2,2p,2p+\alpha-3, 2p+\alpha-1, 2p+\alpha, 2p+\alpha+1|i\in I, j\in J\}$  where $B$ is the array given in Theorem \ref{4p+3}, in effect when $f_I$ and $f_J$ are identity mappings. Hence to prove that row and non-zero columns have distinct partial sums in $B^\prime$, we only need to show that when $i\in I$ and $j\in J$
 \begin{eqnarray*}
\{ \Sigma_{B^\prime}(2i),\Sigma_{B^\prime}(2p+2j+1)\}\cap \Sigma_{B^\prime}(2p+\alpha-3),\Sigma_{B^\prime}(2p+\alpha-1),\Sigma_{B^\prime}(2p+\alpha)\}=\emptyset\mbox{ and}\\
 \{ \overline{\Sigma}_{B^\prime}(2i),\overline{\Sigma}_{B^\prime}(2p+2j+1)\}\cap \overline{\Sigma}_{B^\prime}(2p+\alpha-3),\overline{\Sigma}_{B^\prime}(2p+\alpha-1),\overline{\Sigma}_{B^\prime}(2p+\alpha)\}=\emptyset,
\end{eqnarray*}
where the above elements are calculated as residues modulo $2(4p+3)n+1$.

First note that \begin{eqnarray} n+1\leq\Sigma_{B^{\prime}}(2p+\alpha), \overline{\Sigma}_{B^{\prime}}(2p+\alpha)   \leq 3n. \label{RangeL}\end{eqnarray}.

   Now consider row $a$, since $\Sigma_{B^{\prime}}(4p-2)=\Sigma_{A^{\prime}}(4p-2)=d_{2p}(r_a)+1$ (from Lemma \ref{bbound}),
  the row partial sums for array $B^\prime$ are as follows:
   \begin{eqnarray}
\Sigma_{B^\prime}(2p+\alpha-3)&=&d_{2p}(r_a)+1+L(a,a-2p-\alpha+3)<0,\nonumber\\
\Sigma_{B^\prime}(2p+\alpha-1)&=&d_{2p}(r_a)+1-L(a,a-2p-\alpha-1)<0,\nonumber\\
\Sigma_{B^\prime}(2p+\alpha)&=&-L(a,a-2p-\alpha-1)>0,\nonumber\\
\Sigma_{B^\prime}(2p+\alpha+1)&=&0.\nonumber
\end{eqnarray}

   Then by (\ref{Range2i}) and (\ref{Range2j+1}) we have:
  $(4p+3)n>\Sigma_{B^\prime}(2i)\geq 3n+2>\Sigma_{B^\prime}(2p+\alpha)>n>0>\Sigma_{B^\prime}(2p+2j+1)> d_{2p}(r_a)+3n+2>\Sigma_{B^\prime}(2p+\alpha-3),\Sigma_{B^\prime}(2p+\alpha-1)>d_{2p}(r_a) \geq -(4p+3)n.$
The final inequality can be inferred from the definition of $A^{\prime}$.

  Next, consider column $a\neq 0$. Since $f_I(0)=0$, from (\ref{Col2i}), we may deduce that $\overline{\Sigma}_{B^\prime}(2i)\geq 3n+1$.
Furthermore, by (\ref{Col2i}) and (\ref{Col2j+1}) we have:

   $(4p+3)n>\overline{\Sigma}_{B^\prime}(2i)\geq 3n+1>\overline{\Sigma}_{B^\prime}(2p+\alpha)>n>0>\overline{\Sigma}_{B^\prime}(2p+2j+1)> d_{2p}(c_a)+3n+2>\overline{\Sigma}_{B^\prime}(2p+\alpha-3),\overline{\Sigma}_{B^\prime}(2p+\alpha-1)>-(4p+3)n$.

  Now consider column $a=0$. By (\ref{zeroi}) and (\ref{zeroj}) we have:
  \begin{eqnarray}
  \overline{\Sigma}_{B^\prime}(2i)&=&(2n-1)i+5n+4f_I(i)n\leq 2(4p+3)n+1,\nonumber \\
 \overline{\Sigma}_{B^\prime}(2p+2j+1)&=
  &(p-j)(2n-1)-6n-4f_J(j)n+1>-(4p+3)n.\nonumber
 \end{eqnarray}

Since $\overline{\Sigma}_{B^{\prime}}(4p-2)=-(4p+1)n-1$, we also have:
\begin{eqnarray}
\overline{\Sigma}_{B^{\prime}}(2p+\alpha-3)&=&\overline{\Sigma}_{B}(2p+\alpha-3)=-(4p+1)n-1+L(2p+\alpha-3,0)<0,\label{seec1} \\
\overline{\Sigma}_{B^{\prime}}(2p+\alpha-1)&=&\overline{\Sigma}_{B}(2p+\alpha-1)=-(4p+1)n-1-L(2p+\alpha+1,0)<0,\label{seec2} \\
\overline{\Sigma}_{B^{\prime}}(2p+\alpha)&=&\overline{\Sigma}_{B}(2p+\alpha)=-L(2p+\alpha+1,0)>0,\nonumber\\
\overline{\Sigma}_{B^{\prime}}(2p+\alpha+1)&=&0\nonumber.
\end{eqnarray}

 Assuming $\{L(\beta,0),-L(\beta+4,0)\}\cap\{2n-1\}=\emptyset$, it was shown in \cite{BCDY} (Proof of Theorem 3.10) that partial sums $\overline{\Sigma}_{B}(2p+\alpha-3)$, $\overline{\Sigma}_{B}(2p+\alpha-1)$ and $\overline{\Sigma}_{B}(2p+\alpha)$ are distinct from partial sums $\overline{\Sigma}_{A}(2i+1)$, $\overline{\Sigma}_{A}(2p)$ and $\overline{\Sigma}_{A}(2p+2j+2)$ for all $i\in I$ and $j\in J$ and; $0$. So by Corollary \ref{cdy2} we only need to show that partial sums $\overline{\Sigma}_{B^{\prime}}(2p+\alpha-3)$, $\overline{\Sigma}_{B^{\prime}}(2p+\alpha-1)$ and $\overline{\Sigma}_{B^{\prime}}(2p+\alpha)$ are also distinct from each of the other partial sums $\overline{\Sigma}_{B^\prime}(2i)$ and $\overline{\Sigma}_{B^\prime}(2p+2j+1)$ for all $i\in I$ and $j\in J$.

 In what follows we will make extensive use of (\ref{star4}) together with (\ref{n}).
\begin{itemize}
\item[1)(i)]   $\overline{\Sigma}_{B^{\prime}}(2i)=(2n-1)i+5n+4f_I(i)n\geq 5n$ so by (\ref{RangeL}) and (\ref{modl}),
$$\overline{\Sigma}_{B^{\prime}}(2p+\alpha)\not\equiv \overline{\Sigma}_{B^{\prime}}(2i)\md{2(4p+3)n+1}$$  for all $i\in I$.

\item[1)(ii)]  Suppose that $\overline{\Sigma}_{B^{\prime}}(2i)\equiv \overline{\Sigma}_{B^{\prime}}(2p+\alpha-3)\md{2(4p+3)n+1}$ for some $i\in I$.  Then by (\ref{mod=}),
$\overline{\Sigma}_{B^{\prime}}(2i)=(2n-1)i+5n+4f_I(i)n=2(4p+3)n+1 -(4p+1)n-1+L(2p+\alpha-3,0)$.

Hence $ (4f_I(i)+2i-4p)n-i=L(2p+\alpha-3,0)$ which implies $4f_I(i)+2i-4p=2$ and so $f_I(i)=(2p-i+1)/2$, contradicting the definition of $f_I$.

\item[2)(i)] Suppose that $\overline{\Sigma}_{B^{\prime}}(2p+2j+1)\equiv \overline{\Sigma}_{B^{\prime}}(2p+\alpha)\md{2(4p+3)n+1}$ for some $j\in J$. Then $$(p-j)(2n-1)-6n-4f_J(j)n+1= -L(2p+\alpha+1,0)$$ which implies $2p-2j-6-4f_J(j)=2$  by (\ref{RangeL}). Then $4f_J(j)=2p-2j-8$ so $f_J(j)=(p-j-4)/2$, contradicting the definition of $f_J$.

\item[2)(ii)] Suppose that $\overline{\Sigma}_{B^{\prime}}(2p+2j+1)\equiv \Sigma(2p+\alpha-3)\md{2(4p+3)n+1}$ for some $j\in J$.  Then $$(p-j)(2n-1)-6n-4f_J(j)n+1= -(4p+1)n-1+L(2p+\alpha-3,0)$$ which implies  $L(2p+\alpha-3,0)=(6p-5-2j-4f_J(j))n-p+j+2$. Now by (\ref{RangeL}) $6p-5-2j-4f_J(j)\in \{1,2,3\}$.
Thus $f_J(j)\geq p+(p-j-3)/2\geq p-1$, a contradiction.

\end{itemize}

Finally, the above can similarly be verified for  $\overline{\Sigma}_{B^{\prime}}(2p+\alpha-1)$ since this is bounded by  the same range of values as $\overline{\Sigma}_{B^{\prime}}(2p+\alpha-3)$, from (\ref{seec1}) and (\ref{seec2}).

This completes the proof of Theorem \ref{Bprime}.
\end{proof}

It remains to prove Theorem \ref{main4}. Assume $n\equiv 1\md{4}$, $p>0$, $n> 4p+3$ and $\alpha$ be an integer such that $2p+2\leq \alpha\leq n-2-2p$, $\mbox{\rm gcd}(n,\alpha)=1$, $\mbox{\rm gcd}(n,\alpha-2p+2)=1$ and
$\mbox{\rm gcd}(n,n-1-\alpha-2p)=1$. Let $B^{\prime}$ and $B^{\prime\prime}$ be two Heffter arrays constructed as in the previous theorem for distinct valid choices of $f_I$ and $f_J$.
Firstly, from the definition of $A^\prime$, the diagonals $D_{2p}$ and $D_{2p+\alpha}$ do not depend on the choice of $f_I$ and $f_J$.
Then since these two diagonals of $B^\prime$ and $B^{\prime\prime}$ are identical yet other entries are different, it is impossible to obtain $B^{\prime\prime}$ from $B^{\prime}$ by transpose, rearranging rows and/or columns, or replacing each entry $x$ with $-x$. Therefore, $B^{\prime}$ and $B^{\prime\prime}$ are non-equivalent Heffter arrays. Also by Lemma \ref{compat-diag}  they have simple and compatible orderings.

Next, recall the restrictions $f_I(0)= 0$, $f_I(i)\neq (2p-i+1)/2$ for $i\in I$ and $f_J(j)\neq (p-j-4)/4$ for $j\in J$.
  Thus, if $\mathcal{H}(\epsilon)$ represents the number of derangements on a set of size $\epsilon$ then there are more than $\mathcal{H}(p-2)$ choices for each of $f_I$ and $f_J$. Moreover, from the final condition of Theorem \ref{ladder}, the
subarray $L$ of $B^{\prime}$ may be adjusted provided
$\{L(\beta,0),-L(\beta+4,0)\}\cap\{2n-1\}=\emptyset$. This yields an extra factor of $n-2$ in the number of Heffter arrays.

\section{A lower bound on the number of non-isomorphic biembeddings of $(4p+3)$-cycle systems on orientable surfaces}\label{count}


Let ${\mathcal G}$ and ${\mathcal G}^{\prime}$ be biembeddings of a two cycle decomposition of the complete graph on an orientable surface corresponding to Heffter arrays $H=H(m,n;s,t)$ and $H^{\prime}$ as given in Theorem \ref{Archdeacon}, respectively.
We consider the possibility that ${\mathcal G}$ and ${\mathcal G}^{\prime}$ are isomorphic embeddings while $H$ and $H^{\prime}$ are non-equivalent Heffter arrays. Let $f$ be such an isomorphism acting on the vertex set of ${\mathcal G}$.

Let $C$ be a cycle of length $m$ corresponding to a black face  in ${\mathcal G}$. Since the isomorphism is incidence preserving there exists a $C^{\prime}$ of length $n$ which corresponds to a black face in ${\mathcal G}^{\prime}$. There are
$n(2nt+1)$ choices for $C^{\prime}$ and for a specific vertex $x$ incident with $C$ there are initially $t$ choices for $f(x)$ in $C^{\prime}$ (from above, we can assume that edge direction is preserved). Thus once  $C^{\prime}$ and $f(x)$ are chosen the rest of the mapping $f$ is forced.
Thus, since there is a sharply vertex-transitive automorphism of ${\mathcal G}^{\prime}$, there are at most $nt$ choices for the isomorphism $f$.

Consequently, Theorem \ref{main4} together with Lemmas \ref{1}, \ref{2} and \ref{3} will imply the following.

\begin{theorem} Let $n\equiv 1 \md{4}$, $p>0$, $n> k=4p+3$ and either: (a) $n$ is prime; (b) $n=4p+5$; or (c) $n>(2p+2)^2$ and either $n\not\equiv 0\md{3}$ or
$p\equiv 1\md{3}$. Then there exists at least
$(n-2)[(p-2)!/e]^2/nk$ non-isomorphic
  face $2$-colourable biembeddings of $K_{2nk+1}$ on an orientable surface such that
the faces are of length $k$, each with
 a sharply vertex-transitive action
of $\mathbb{Z}_{2nk+1}$.
\end{theorem}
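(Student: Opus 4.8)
The plan is to assemble the stated bound from four ingredients already in hand: the number-theoretic existence of a suitable $\alpha$ (Lemmas \ref{1}, \ref{2}, \ref{3}), the enumeration of non-equivalent arrays (Theorem \ref{main4}), Archdeacon's construction (Theorem \ref{Archdeacon}), and the symmetry-counting sketched in the paragraph preceding the statement, which controls how many non-equivalent arrays can give isomorphic biembeddings. First I would dispatch the hypotheses by exhibiting a valid $\alpha$ in each of the cases (a)--(c). Note that $n\equiv 1\md 4$ with $n>4p+3$ forces $n\geq 4p+5$, so the range $2p+2\leq\alpha\leq n-2-2p$ is nonempty. For (a), with $n$ prime the three gcd conditions can fail only for at most three residues modulo $n$, and the range is large enough to avoid them (this is the trivial case noted in the proof of Lemma \ref{3}). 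For (b) I would invoke Lemma \ref{2}, which gives $\alpha=2p+2$ directly. For (c) I would observe that $n\equiv 1\md 4$ makes $n$ odd, that $n>(2p+2)^2$ supplies the size hypothesis of Lemma \ref{3}, and that an odd multiple of $3$ is $\equiv 3\md 6$, so the assumption ``$n\not\equiv 0\md 3$ or $p\equiv 1\md 3$'' is exactly the hypothesis of Lemma \ref{3}; hence Lemma \ref{3} produces $\alpha$. (Lemma \ref{1} confirms this divisibility restriction is necessary, so (c) cannot be relaxed.)

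With $\alpha$ fixed, I would feed it into Theorem \ref{main4} to obtain at least $N:=(n-2)[(p-2)!/e]^2$ non-equivalent globally simple integer Heffter arrays $H(n;k)$ with $k=4p+3$, each carrying an ordering that is both simple and compatible. Applying Theorem \ref{Archdeacon} to each such array then yields a face $2$-colourable biembedding of $K_{2nk+1}$ on an orientable surface in which every face is a cycle of length $k$, and on which $\mathbb{Z}_{2nk+1}$ acts sharply vertex-transitively. The remaining and genuinely substantive step is to pass from \emph{non-equivalent arrays} to \emph{non-isomorphic biembeddings}: since inequivalent arrays need not give non-isomorphic embeddings, I must bound the multiplicity of the map sending an equivalence class of arrays to the isomorphism class of its biembedding.

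The hard part will be this multiplicity bound, and I would argue it as in the paragraph above the theorem. Fix an isomorphism class and a representative $\mathcal{G}$. Given any isomorphism $f$ from $\mathcal{G}$ to another biembedding $\mathcal{G}'$ in the class, I would first compose $f$ with the translation action; because this action is sharply vertex-transitive, I may normalise so that $f$ fixes the base vertex $0$. As $f$ is incidence- and colour-preserving and (after possibly applying the negation equivalence) direction-preserving, it is then completely determined by the image of a single black face incident with $0$. The base vertex has degree $2nk$ in $K_{2nk+1}$ and each black face through it uses two of those incident edges, so at most $nk$ black faces pass through $0$; hence there are at most $nk$ normalised isomorphisms, and therefore at most $nk$ of our non-equivalent arrays can produce biembeddings in a single isomorphism class. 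Dividing $N$ by this fibre bound gives at least $(n-2)[(p-2)!/e]^2/(nk)$ non-isomorphic biembeddings, each inheriting the sharply vertex-transitive $\mathbb{Z}_{2nk+1}$-action, as claimed.

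The delicate point I would take care to verify is that the three array equivalences used in Theorem \ref{main4} (rearranging rows or columns, global negation $x\mapsto -x$, and transposition) are precisely the symmetries absorbed in the normalisation above: row/column rearrangement and negation must be shown to correspond to the translation subgroup and the direction reversal, and transposition to the colour swap of faces. Ensuring this correspondence is tight is what guarantees that distinct non-equivalent arrays yield distinct normalised isomorphisms, so that the fibre size is genuinely at most $nk$ and no overcounting inflates the divisor; I would devote the bulk of the argument to making this bookkeeping precise.
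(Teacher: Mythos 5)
Your proposal is correct and follows the paper's own route essentially step for step: cases (a)--(c) are dispatched via Lemmas \ref{1}, \ref{2} and \ref{3} exactly as the paper does, Theorem \ref{main4} supplies the $(n-2)[(p-2)!/e]^2$ non-equivalent arrays with simple and compatible orderings, Theorem \ref{Archdeacon} converts each into a biembedding, and your fibre bound of $nk$ is the same counting argument as the paper's, merely reorganised (you normalise the isomorphism to fix the vertex $0$ using the sharply vertex-transitive action and then count the at most $nk$ black faces through that vertex, whereas the paper counts the $n(2nk+1)\cdot k$ choices of image face and image vertex and then divides by the $2nk+1$ translations). The two counts are identical in substance, so this is essentially the paper's proof.
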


\vspace{5mm}
\noindent{\bf Acknowledgment:} The fourth author would like to acknowledge support from
TUBITAK 2219 and the School of Mathematics and Physics, The University of Queensland, through the awarding of a Ethel Raybould Visiting Fellowship.

 \end{document}